\theoremstyle{definition}
\newtheorem{theo}{Theorem}[section]
\newtheorem{lemm}[theo]{Lemma}
\newtheorem{rema}[theo]{Remark}
\numberwithin{equation}{section}
\newcommand{\bC}{{\mathbb{C}}}
\newcommand{\bN}{{\mathbb{N}}}
\newcommand{\bR}{{\mathbb{R}}}
\newcommand{\bZ}{{\mathbb{Z}}}
\newcommand{\tC}{{\mathtt{C}}}
\newcommand{\tD}{{\mathtt{D}}}
\newcommand{\tF}{{\mathtt{F}}}
\newcommand{\tG}{{\mathtt{G}}}
\newcommand{\tI}{{\mathtt{I}}}
\newcommand{\tT}{{\mathtt{T}}}
\newcommand{\calA}{{\mathcal{A}}}
\newcommand{\ca}{{\mathfrak{a}}}
\newcommand{\cA}{{\mathfrak{A}}}
\newcommand{\cB}{{\mathfrak{B}}}
\newcommand{\cc}{{\mathfrak{c}}}
\newcommand{\cd}{{\mathfrak{d}}}
\newcommand{\ce}{{\mathfrak{e}}}
\newcommand{\cP}{{\mathfrak{P}}}
\newcommand{\cs}{{\mathfrak{s}}}
\newcommand{\cI}{{\mathfrak{I}}}
\newcommand{\cN}{{\mathfrak{N}}}
\newcommand{\cR}{{\mathfrak{R}}}
\newcommand{\fr}{{\mathfrak{r}}}
\begin{document}
\title[Nonlinear Schr\"odinger Equations With Quasi-Periodic Initial Data II.]{Existence, Uniqueness and Asymptotic Dynamics of Nonlinear Schr\"odinger Equations With Quasi-Periodic Initial Data: \\II. The Derivative NLS
}

\dedicatory{Dedicated to the memory of Thomas Kappeler}

\author{David Damanik}
\address{\scriptsize (D. Damanik)~Department of Mathematics, Rice University, 6100 S. Main Street, Houston, Texas
77005-1892}
\email{damanik@rice.edu}
\thanks{The first author (D. Damanik) was supported by Simons Fellowship $\# 669836$ and NSF grants DMS--1700131 and DMS--2054752}

\author{Yong Li}
\address{\scriptsize  (Y. Li)~Institute of Mathematics, Jilin University, Changchun 130012, P.R. China. School of Mathematics and Statistics, Center for Mathematics and Interdisciplinary Sciences, Northeast Normal University, Changchun, Jilin 130024, P.R.China.}
\email{liyong@jlu.edu.cn}
\thanks{The second author (Y. Li) was supported in part by National Basic Research Program of China (2013CB834100), and NSFC (12071175).}

\author{Fei Xu}
\address{\scriptsize (F. Xu)~Institute of Mathematics, Jilin University, Changchun 130012, P.R. China.}
\email{stuxuf@outlook.com}
\thanks{The third author (F. Xu) is sincerely grateful for the invitation to give a remote talk at UCLA on January 23, 2024, where the existence and uniqueness results on the (derivative) NLS with quasi-periodic initial data were announced publicly.}
\thanks{The authors sincerely thank Maria Ntekoume (Concordia University) for sharing paper \url{https://arxiv.org/pdf/2112.04648.pdf} with us.}

\begin{abstract}
This is the second part of a two-paper series studying the nonlinear Schr\"odinger equation with quasi-periodic initial data. In this paper, we focus on the quasi-periodic Cauchy problem for the derivative nonlinear Schr\"odinger equation. Under the assumption that the Fourier coefficients of the initial data obey an exponential upper bound, we establish local existence of a solution that retains quasi-periodicity in space with a slightly weaker Fourier decay. Moreover, the solution is shown to be unique within this class of quasi-periodic functions. Also, we prove that, for the derivative nonlinear Schr\"odinger equation in a weakly nonlinear setting, within the time scale, as the small parameter of nonlinearity tends to zero, the nonlinear solution converges asymptotically to the linear solution in the sense of both sup-norm and analytic Sobolev-norm.

The proof proceeds via a consideration of an associated infinite system of coupled ordinary differential equations for the Fourier coefficients and an explicit combinatorial analysis for the Picard iteration with the help of Feynman diagrams and the power of $\ast^{[\cdot]}$ labelling the complex conjugate.
\end{abstract}
\maketitle
\tableofcontents

\section{Introduction and Main Results}\label{intro}

This is the second part of our two-paper series to study the quasi-periodic Cauchy problem for Schr\"odinger-type equations, and we refer to the first paper \cite{DLX24I} for a more detailed introduction to the series.

In this paper, we study the following derivative nonlinear Schr\"odinger equation (dNLS
for short)
\begin{align}\label{dnls}
\tag{dNLS}{\rm i}\partial_tu+\partial_{xx}u-{\rm i}\partial_x(|u|^{2}u)=0
\end{align}
with the quasi-periodic initial data
\begin{align}\label{id}
u(0,x)=\sum_{n\in\bZ^\nu}\cc(n)e^{{\rm i}\langle n\rangle x},
\end{align}
where $\langle n\rangle\triangleq n\cdot\omega:=\sum_{j=1}^\nu n_j\omega_j$ and $\omega$ is rationally independent; $u$ is a complex-valued field defined on the real line $\mathbb R$; see \cite{HBRMV2022,RMV23APDE}. Here and below, $\partial_t$ and $\partial_x$ indicate time and spatial derivatives, respectively.

What we are interested in is to study the existence, uniqueness and asymptotic dynamics of spatially quasi-periodic solutions with the same frequency vector as the initial data to the above quasi-periodic Cauchy problem \eqref{dnls}-\eqref{id}. That is, such a solution is defined by Fourier series
\begin{align}\label{ses}
u(t,x)=\sum_{n\in\bZ^\nu}\cc(t,n) e^{{\rm i}\langle n\rangle x}, \quad x\in\mathbb R.
\end{align}

From the point of physics, the derivative NLS was first derived by plasma physicists in \cite{1976,19762} for studying the one-dimensional compressible magnetohydrodynamic equation in the presence of the Hall effect and describing the propagation of circular polarized Alfv\'en wave in magnetized plasmas with a constant magnetic field. In addition, the derivative NLS also depicts the phenomena of ultrashort optical pulses; see \cite{MMW07}. It should be emphasized that the derivative nonlinearity becomes physically important in the propagation of short pulses while negligible in many experimental scenarios; see \cite{HO92PD,HBRMV2022}. Some more details on the physical applications of the derivative NLS can be found in  \cite{SSBOOK,CLPS,JLPS,PT,BP22IM} and the references therein.

From the point of mathematics, \eqref{dnls} is well-known to be {\em completely integrable} and has all the properties of equations which are exactly solved by the inverse scattering technique.

(i)~It has an infinite number of conservation laws including the conservation of mass, momentum and energy:
\begin{align*}
M(u)&:=\int_{\mathbb R}|u|^2{\rm d}x,\\
H(u)&:={\text{Im}}\int_{\mathbb R}\overline{u}u_x{\rm d}x+\frac{1}{2}|u|^4{\rm d}x,\\
E(u)&:=\int_{\mathbb R}|u_x|^2+\frac{3}{2}{\text{Im}}(|u|^2u\overline{u_x})+\frac{1}{2}|u|^6{\rm d}x.
\end{align*}
The middle one here serves as the Hamiltonian for \eqref{dnls} since it generates the dynamics of \eqref{dnls} via the following Possion structure
\[\{F,G\}:=\int\frac{\delta F}{\delta u}\partial_x\frac{\delta G}{\delta\overline{u}}+\frac{\delta F}{\delta \overline{u}}\partial_x\frac{\delta G}{\delta u}{\rm d}x;\]
see \cite{HBRMV2022,RMV23APDE}.

(ii) It enjoys a Lax pair structure $(\mathcal U,\Upsilon)$, where
\begin{align*}
\mathcal{U}(\lambda)= & -{\rm i} \sigma_3\left(\lambda^2+{\rm i} \lambda U\right), \quad U=\left(\begin{array}{ll}
0 & u \\
\bar{u} & 0
\end{array}\right), \\
\Upsilon(\lambda)= & -{\rm i}\left(2\lambda^4-\lambda^2|u|^2\right)\sigma_3+\left(\begin{array}{cc}
0 & 2 \lambda^3 u-\lambda|u|^2 u+{\rm i}\lambda u_x \\
-2 \lambda^3 \bar{u}+\lambda|u|^2 \bar{u}+{\rm i}\lambda \overline{u_x} & 0
\end{array}\right),
\end{align*}
$\lambda\in\mathbb C$ is a $(t,x)$-independent spectral parameter, $\sigma_3$ is the Pauli matrix given by
$\sigma_3=\left(\begin{array}{cc}
1 & 0 \\
0 & -1
\end{array}\right)$.
This means that $u$ solves \eqref{dnls} if and only if we have the following zero curvature condition \[\frac{\partial \mathcal{U}}{\partial t}-\frac{\partial \Upsilon}{\partial x}+[\mathcal{U}, \Upsilon]=0;\]
see \cite{KN78JMP,L83,L89TAMS,BP22IM} and also \cite{JLPS2018CMP,JLPS2018CPDE} for its gauge-equivalent form. These give rise to the soliton phenomena; see \cite{HO92PD,JLPS2018CMP}.

What's more, compared with the standard nonlinear Schr\"odinger equation with power-law nonlinearity, there is a conspicuous derivative operator ${\rm i}\partial_x$ acting on the classical cubic nonlinear term. This leads to the facts that (i)~\eqref{dnls} does not admit a focusing-defocusing dichotomy (the nonlinearity can be reversed by simply replacing $x\mapsto-x$), and it does not inherit the Galilean symmetry of the linear Schr\"odinger equation; see \cite{HO92PD,NORBS12JEMS,HBRMV2022,BP22IM}. (ii)~\eqref{dnls} has a special $L^2$-invariance property compared with the cubic nonlinear Schr\"odinger equation, i.e., for $\lambda>0$, the rescaled function $u_\lambda(t,x)=\lambda^{q} u(\lambda^2t,\lambda x)$ solves \eqref{dnls} if and only if $q=1/2$. This scale parameter is like in the case of the quintic NLS, rather than the cubic NLS (recall that $q=1$ is the $L^2$-invariant value for the cubic NLS; see \cite{T06}).


As is well known, a lot of methods and techniques are available for the Cauchy problem of the nonlinear Schr\"odinger equation with power-law nonlinearity. However some of the them face limitations when directly applied to \eqref{dnls}, especially with quasi-periodic initial data, primarily due to challenges arising from the presence of a space derivative ${\rm i}\partial_x$ acting on the classical cubic nonlinear term. This is the so-called {\em derivative loss phenomenon}; see \cite{HO92PD,O96IUMJ}.
In our two-paper series, the combinatorial analysis method has been demonstrated to be applicable, at least for establishing local results, not only to the nonlinear Schr\"odinger equation (see the first paper \cite{DLX24I}) but also to \eqref{dnls}.

More specifically, in the quasi-periodic setting, the differential operator ${\rm i}\partial_x$ has coordinates $\{-\langle n\rangle\}$ under the quasi-periodic Fourier basis $\{e^{{\rm i}\langle n\rangle x}\}$. Due to encountering derivative loss, we have to tackle {\em the alternating discrete convolution of higher dimensions} in which the total distance $n$ will branch into {higher-dimensional} variable $n^{(k)}$ per the basic dual lattice $\bZ^\nu$ with the splitting condition $n=\cc\ca\cs(n^{(k)})$, serving as a growth factor.
It is much more difficult to control the growth of $|\cc\ca\cs(n^{(k)})|$ when trying to apply some standard tools; let us compare the case of $H^s(\mathbb T)$ \cite{NORBS12JEMS,IMDS24MA}, the case of $H^s(\mathbb R)$ \cite{CKSTT2001SIAMMA,CKSTT2002SIAMMA,LPS2018AIHP,JLPS2018CMP,JLPS2018CPDE,JLPS2020APDE,HBRMV2022,BP22IM}, the case of $H^s(\mathbb R^+)$ \cite{L08PD,L11PD,AL17N,EGT18}, and the case of real-valued PDE \cite{DG2017JAMS,MR4751185,DLX22AR}. In order to overcome this difficulty, in this two-paper series, we develop an explicit combinatorial method applied in \cite{DG2017JAMS,MR4751185,DLX22AR} by introducing {\em Feynman diagrams} and proposing the {\em power of $\ast^{[\cdot]}$} to label the complex conjugate (we call it the {\bf pcc} label method in the first paper \cite{DLX24I}) to analyze the complicated Picard iteration.

To this end, we have to demand a stronger decay condition (i.e., the so-called exponential decay condition) on the initial Fourier data to make the growth of $|\cc\ca\cs(n^{(k)})|$ controllable as $k$ tends to infinity, compared with our treatment of the standard NLS in the first paper \cite{DLX24I}.
Here we say that the initial Fourier data $\cc$ satisfies the exponential decay condition if there exists a pair $(B,\kappa)\in(0,\infty)\times(0,1]$ such that
\begin{align}\label{ed}
|\cc(n)|\leq B^{1/2}e^{-\kappa|n|},\quad\forall n\in\bZ^\nu.
\end{align}

Under the exponential decay condition \eqref{ed}, applying the combinatorial analysis method together with Feynman diagrams and the {\bf pcc} label method (\autoref{pcc}),
we obtain the following main results for the quasi-periodic Cauchy problem \eqref{dnls}-\eqref{id}.
\begin{theo}[dNLS]\label{dnlsth}
If the initial Fourier data $\cc$ is $\mathtt \kappa$-exponentially decaying in the sense of \eqref{ed}, then the following statements hold:
\begin{enumerate}
  \item (Existence)~The quasi-periodic Cauchy problem \eqref{dnls}-\eqref{id} has a spatially quasi-periodic solution \eqref{ses} with the same frequency vector as the initial data (i.e., it retains the same spatial quasi-periodicity) defined on $[0,t_1)\times\bR$, where $t_1=\min\{t_2,t_3\}$ (see \eqref{t2} and \eqref{t3} for the definition of $t_2$ and $t_3$ respectively).
  \item (Decay and smoothness)~The spatially quasi-periodic solution \eqref{ses} is, uniformly in $t$, $\kappa/2$-exponentially decaying (with a slightly worse decay rate), that is, \begin{align}\label{sdecay}
      |\cc(t,n)|\lesssim e^{-\kappa/2},\quad \forall (t,n)\in[0,t_1)\times\mathbb Z^\nu.
       \end{align}
       Hence this solution is classical in time and analytic in space (see \cite[Lemma 4.5]{DLX24I}).
  \item (Uniqueness)~The spatially quasi-periodic solution \eqref{ses} with exponentially decaying Fourier coefficients  is unique on $[0,t_4)\times\bR$ (see \eqref{t4} for the definition of $t_4$).
  \item (Asymptotic dynamics)~Consider the quasi-periodic Cauchy problem for the derivative NLS \eqref{dnls} with small nonlinearity, that is, with $0<|\epsilon|\ll1$,
\begin{align}
\tag{$\epsilon$-dNLS}{\rm i}\partial_tu+\partial_{xx}u-{\rm i}\epsilon\partial_x(|u|^{2}u)=0.
\end{align}
Then for $t=|\epsilon|^{-1+\eta}<|\epsilon|^{-1}$ with $0<\eta\ll1$, as $\epsilon\rightarrow0$, we have
\begin{align}
L^\infty\text{-asymptoticity}:\quad&\|u^\epsilon(t)-u_{\text{linear}}(t)\|_{L_x^\infty(\mathbb R)}\rightarrow 0;\\
\text{Analytic Sobolev asymptoticity}:\quad&
\|u^\epsilon(t)-u_{\text{linear}}(t)\|_{\mathcal H_x^{\varrho}(\mathbb R)}\rightarrow 0, \quad\left(0<\frac{\kappa}{4}-4\varrho\leq1\right),
\end{align}
where $\|f\|_{L_x^{\infty}(\mathbb R)}=\max_{x\in\mathbb R}|f(x)|$ and $\|f\|_{\mathcal H_x^{\varrho}(\mathbb R)}
=\|e^{\varrho|n|}\hat{f}(\langle n\rangle)\|_{\ell_{n}^2(\mathbb Z^\nu)}$ with $\varrho>0$.
\end{enumerate}
\end{theo}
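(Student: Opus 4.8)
The plan is to convert \eqref{dnls} into the dynamics of its Fourier coefficients and to solve the resulting infinite system by a diagrammatically organized Picard iteration in a scale of exponentially weighted sequence spaces. First I would substitute the ansatz \eqref{ses} into \eqref{dnls}. Because $\langle\cdot\rangle$ is additive on $\bZ^\nu$, the cubic term $|u|^2u=u^2\bar u$ produces an \emph{alternating} convolution, and $-{\rm i}\partial_x$ multiplies the $n$-th coefficient by $\langle n\rangle$; matching coefficients of $e^{{\rm i}\langle n\rangle x}$ gives
\begin{equation}\label{plan-ode}
\dot\cc(t,n)=-{\rm i}\langle n\rangle^2\cc(t,n)+{\rm i}\langle n\rangle\!\!\sum_{n_1+n_2-n_3=n}\!\!\cc(t,n_1)\,\cc(t,n_2)\,\overline{\cc(t,n_3)}.
\end{equation}
The prefactor $\langle n\rangle$ on the cubic term is precisely the derivative loss. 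I would strip off the linear flow with the integrating factor $e^{{\rm i}\langle n\rangle^2 t}$, recast \eqref{plan-ode} as a Duhamel integral equation, and iterate starting from the linear solution, whose coefficients are $\cc(n)e^{-{\rm i}\langle n\rangle^2 t}$ and therefore inherit the full rate \eqref{ed} at every leaf.

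Next I would expand the iteration combinatorially. The $k$-th Picard term $\cc^{(k)}(t,n)$ becomes a sum over ternary trees with $k$ cubic vertices, each summand a product of initial coefficients at the $2k+1$ leaves, a nested time integral over the tree simplex carrying the oscillatory phases, and one derivative factor $\langle\cdot\rangle$ per vertex. I would encode each summand as a Feynman diagram and use the ${\ast^{[\cdot]}}$ label (\autoref{pcc}) to record at which leaves a complex conjugate acts, so that the alternating constraint $n_1+n_2-n_3=n$ propagates consistently through the diagram. Working in the Wiener-type weighted algebra with norm $\sum_n|\cc(n)|e^{\rho|n|}$, the aim is a geometric-in-$k$ bound $|\cc^{(k)}(t,n)|\lesssim (C(B,\kappa)\,t)^k\,e^{-(\kappa/2)|n|}$ valid up to an explicit time $t_1$; summing over $k$ then furnishes the solution \eqref{ses}, and the uniform $\kappa/2$-decay \eqref{sdecay} follows because the weight is kept at or above $\kappa/2$ throughout $[0,t_1)$, after which classicality in time and spatial analyticity come from \cite[Lemma 4.5]{DLX24I}.

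The hard part will be absorbing the derivative loss at every vertex without destroying summability. The mechanism I would use is to trade a sliver of analyticity radius for each lost derivative: since $|\langle n\rangle|\,e^{-\delta|n|}\le |\omega|/(e\delta)$, every vertex factor can be absorbed by a slice $\delta$ of exponential weight at the cost of a constant $\propto\delta^{-1}$. This yields the basic multilinear estimate $\|N(u)\|_{\rho'}\lesssim(\rho-\rho')^{-1}\|u\|_{\rho}^3$ for the Duhamel nonlinearity $N$ mapping the radius-$\rho$ algebra into the radius-$\rho'$ one, with $\rho'<\rho$. Distributing the total available slice $\kappa/2$ across the $k$ vertices and using that the nested time integrations compensate the accumulated derivative-loss constants, the diagrammatic series converges geometrically, in the manner of an Ovsyannikov/Cauchy--Kovalevskaya scheme whose analyticity radius decreases linearly from $\kappa$ and stays above $\kappa/2$ on $[0,t_1)$. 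This is exactly why exponential decay (rather than mere polynomial decay) is required and why the rate degrades from $\kappa$ to $\kappa/2$; the residual exponential weight simultaneously makes each alternating convolution sum finite via $\sum_m e^{-c|m|}<\infty$.

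Finally, uniqueness and the asymptotic dynamics follow once the estimate above is in hand. For uniqueness I would run a contraction/Gronwall argument for the Duhamel map on the ball of $\kappa/2$-exponentially decaying coefficient sequences, obtaining a (possibly shorter) interval $[0,t_4)$ on which the map is a strict contraction. For the asymptotics I would insert the parameter $\epsilon$ in front of the nonlinearity, so that each $k$-vertex diagram carries a factor $\epsilon^k$ and hence $u^\epsilon(t)-u_{\rm linear}(t)=\sum_{k\ge1}\epsilon^k\cc^{(k)}(t,\cdot)$ is $O(|\epsilon|\,t)$ in the weighted norm. On the time scale $t=|\epsilon|^{-1+\eta}$ this is $O(|\epsilon|^{\eta})\to0$; the $L^\infty$ statement follows by summing the Fourier series against $\sum_n e^{-(\kappa/2)|n|}<\infty$, and the analytic Sobolev statement follows because the admissible $\varrho$ is small enough relative to the retained decay (as quantified by $0<\kappa/4-4\varrho\le1$) that $e^{\varrho|n|}$ is dominated, keeping $\|e^{\varrho|n|}\widehat{(u^\epsilon-u_{\rm linear})}\|_{\ell_n^2}$ both finite and $O(|\epsilon|^{\eta})$.
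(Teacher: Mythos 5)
Your overall architecture coincides with the paper's: reduce to the coupled ODE system for the Fourier coefficients, apply Duhamel and a Picard/tree expansion organized by Feynman diagrams with the $\ast^{[\cdot]}$ labels, prove a geometric-in-$k$ bound with the weight degraded from $\kappa$ to $\kappa/2$, and then obtain uniqueness by a telescoping contraction and the asymptotics from the $O(|\epsilon| t)$ size of the Duhamel term. The existence interval, the decay statement, and both asymptotic estimates are handled exactly as in the paper.

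The gap is in the one step where the paper has to work hardest: your mechanism for absorbing the derivative loss. You propose to pay for each vertex factor $|\langle n_v\rangle|$ with a slice $\delta_v$ of exponential weight via $|n_v|e^{-\delta_v|n_v|}\le(e\delta_v)^{-1}$, distributing a total budget of order $\kappa$ over the vertices, and you assert that ``the nested time integrations compensate the accumulated derivative-loss constants.'' That compensation is exactly what must be proved, and it fails under the natural uniform allocation. A diagram with $V$ vertices contributes a nested time integral of size $t^{V}/\tD(\gamma)$, where $\tD(\gamma)=\prod_{v}\ell(\gamma_v)$ is the tree factorial; this equals $V!$ only for the linear (caterpillar) tree and is merely \emph{exponential} in $V$ for bushy trees (e.g.\ the balanced ternary tree). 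Meanwhile, equal slicing $\delta_v=c\kappa/V$ costs $\prod_v\delta_v^{-1}\sim(V/(c\kappa))^{V}$, so the ratio $(V/(c\kappa))^{V}\,t^{V}/\tD(\gamma)$ diverges for every $t>0$ on bushy trees; depth-weighted slicings fix the bushy trees but then lose on the caterpillar. In other words, no single a priori allocation of the analyticity budget makes the series converge, and making a tree-dependent (optimal) allocation work requires precisely the combinatorial input you have skipped. The paper avoids slicing altogether: it expands each vertex momentum as $|\cc\ca\cs(n_v)|\le\sum_{j}|m_j|$ over the leaves below $v$, records the resulting exponents in the multi-index sets $\mathfrak R^{(k,\gamma^{(k)})}$ with $|\alpha|=\ell(\gamma^{(k)})$ (Lemma \ref{lemalpha}), converts $|m_j|^{\alpha_j}e^{-\kappa|m_j|/2}$ into $\alpha_j!$ via Lemmas \ref{3}--\ref{4}, and then proves the exact identity $P_k(\gamma^{(k)})=3\ell(\gamma^{(k)})\prod_{j}P_{k-1}(\gamma_j^{(k-1)})$ in Lemma \ref{lemdd}, so that the sum of factorials equals $3^{\ell(\gamma)}\tD(\gamma)$ and cancels the tree factorial branch by branch. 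You need either this combinatorial identity or a proof that the optimal per-tree slicing costs at most $C^{\ell(\gamma)}\tD(\gamma)\kappa^{-\ell(\gamma)}$; without one of these, the convergence of the diagrammatic series, and hence existence, is not established.
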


\begin{rema}
(a)~The existence and uniqueness results were publicly announced in January 2024 in a talk given by Fei Xu in the Analysis and PDE Seminar at UCLA; see \cite{F2024CULA}. About a month after this seminar talk, Hagen Papenburg posted the preprint \cite{P} on the arXiv, in which he develops an alternative approach to studying dispersive PDEs with quasi-periodic initial data.

(b)~The existence and uniqueness results are about {\em large data local solutions} for the one-dimensional derivative NLS equation with quasi-periodic initial data, where a smallness condition is not required.

(c)~The asymptotic dynamics result requires a smallness condition on the nonlinearity, which is essentially a {\em small data quasi-periodic Cauchy problem}. This is motivated by \cite{GGKS}. However we do not know what will happen for the critical case $t\sim|\epsilon|^{-1}$ and the supercritical case $t>|\epsilon|^{-1}$.
\end{rema}

\section{Proof of Theorem \ref{dnlsth}}\label{secdnls}

\subsection{Preliminaries}\label{pcc}

Before giving the proof, we collect some concepts and notations from the first paper \cite{DLX24I}, especially for the case $p=1~(P=3)$.

\subsubsection{Power of $\ast^{[\cdot]}$ for the Complex Conjugate}
The first important concept is the so-called {power of $\ast^{[\cdot]}$ for the complex conjugate} ({\bf pcc} for short). Specifically, we use $z^{\ast^0}$ and $z^{\ast^1}$ to stand for the complex number $z$ and its complex conjugate $\bar z$ respectively. With this notation at hand, the $m$-times complex conjugate of $z$ is $z^{\ast^{[m]}}$, where $\{0,1\}\ni[m]\equiv m~(\text{mod}~2)$. In addition, as to $[\cdot]$, we have the following operation properties:
\begin{itemize}
  \item $ \overline{z^{\ast^{[m]}}}=z^{\ast^{[m+1]}}$ for all $m\in\bN$ and $z\in\bC$;
  \item  $ \left(z^{\ast^{[m]}}\right)^{\ast^{[m^\prime]}}=
 z^{\ast^{[m+m^\prime]}}$ for all $m,m^\prime\in\bN$ and $z\in\bC$;
 \item $ (z_1+z_2)^{\ast^{[m]}}=z_1^{\ast^{[m]}}+z_2^{\ast^{[m]}}$ for all $m\in\bN$ and $z_1,z_2\in\bC$;
  \item $ (z_1z_2)^{\ast^{[m]}}=z_1^{\ast^{[m]}}\cdot z_2^{\ast^{[m]}}$ for all $m\in\bN$ and $z_1,z_2\in\bC$.
\end{itemize}
\subsubsection{Combinatorial Structure and Some Basic Concepts}
Next we introduce some combinatorial concepts and notations that are related to the cubic nonlinearity $u\bar uu$.

The {\bf branch set} $\Gamma^{(k)}$ is defined by letting
\begin{align}\label{g}
\Gamma^{(k)}=
\begin{cases}
\{0,1\},&k=1;\\
\{0\}\cup(\Gamma^{(k-1)})^{3},&k\geq2.
\end{cases}
\end{align}
It is used to label or follow every term w.r.t. the initial data in the Picard iteration.

The {\bf first counting function} $\sigma$ ($2\sigma$
indeed) acting on the branch set is defined by letting
\begin{align}\label{s}
\sigma(\gamma^{(k)})=
\begin{cases}
\frac{1}{2},&\gamma^{(k)}=0\in\Gamma^{(k)},k\geq1;\\
\frac{3}{2},&\gamma^{(1)}=1\in\Gamma^{(1)};\\
\sum_{j=1}^{3}\sigma(\gamma_j^{(k-1)}),&\gamma^{(k)}=(\gamma_j^{(k-1)})_{1\leq j\leq 3}\in(\Gamma^{(k-1)})^{3},k\geq2.
\end{cases}
\end{align}
It depicts the degree/multiplicity of the nonlinearity in the sense of the Picard iteration, that is, the number of the initial Fourier data on each branch in the Picard iteration.

The {\bf second counting function} $\ell$ ($2\ell$ indeed) acting on the branch set is defined by letting
\begin{align}\label{ee}
\ell(\gamma^{(k)})=
\begin{cases}
0,&\gamma^{(k)}=0\in\Gamma^{(k)},k\geq1;\\
1,&\gamma^{(1)}=1\in\Gamma^{(1)};\\
1+\sum_{j=1}^{3}\ell(\gamma_j^{(k-1)}),&\gamma^{(k)}=(\gamma_j^{(k-1)})_{1\leq j\leq 3}\in(\Gamma^{(k-1)})^{3},k\geq2.
\end{cases}
\end{align}
It stands for the number of time integrations.

With these intuitions in mind, by induction, we have that, for all $k\geq1$,
$2\sigma(\gamma^{(k)})$ is odd, $2\ell(\gamma^{(k)})$ is even, and
\begin{align}\label{sil}
\sigma(\gamma^{(k)})=\ell(\gamma^{(k)})+1/2.
\end{align} 

The {\bf combinatorial lattice space} $\cN^{(k,\gamma^{(k)})}$ {originated from} $\bZ^\nu$ on each branch is defined by letting
\begin{align}\label{n}
&\cN^{(k,\gamma^{(k)})}=
\begin{cases}
\bZ^\nu,&\gamma^{(k)}=0\in\Gamma^{(k)},k\geq1;\\
(\bZ^\nu)^{3},&\gamma^{(1)}=1\in\Gamma^{(1)};\\
\prod_{j=1}^{3}\cN^{(k-1,\gamma_j^{(k-1)})},&\gamma^{(k)}=(\gamma_j^{(k-1)})_{1\leq j\leq 3}\in(\Gamma^{(k-1)})^{3},k\geq2.
\end{cases}
\end{align}
Let dim$_{\bZ^{\nu}}\cN^{(k,\gamma^{(k)})}$ be the number of components in $\cN^{(k,\gamma^{(k)})}$ per $\bZ^\nu$. Then we have
\begin{align}\label{dimsi}
\dim_{\bZ^\nu}\cN^{(k,\gamma^{(k)})}=
2\sigma(\gamma^{(k)}),\quad\forall k\geq1.
\end{align}
Hence it is reasonable to set $n^{(k)}=(m_j)_{1\leq j\leq 2\sigma(\gamma^{(k)})}$, where $m_j\in\bZ^\nu$ for all $j=1,\cdots,3$.

The {\bf combinatorial alternating sums}, denoted by $\cc\ca\cs(n^{(k)})$, of $n^{(k)}\in\cN^{(k,\gamma^{(k)})}$, is defined by letting
\begin{align}\label{as1}
&\cc\ca\cs(n^{(k)})=
\begin{cases}
n^{(k)},&\gamma^{(k)}=0\in\Gamma^{(k)},n^{(k)}\in\cN^{(k,0)}, k\geq1;\\
\sum_{j=1}^{3}(-1)^{j-1}n_j,&\gamma^{(1)}=1\in\Gamma^{(1)},n^{(1)}=(n_j)_{1\leq j\leq 3}\in(\bZ^\nu)^{3};\\
\sum_{j=1}^{3}(-1)^{j-1}\cc\ca\cs(n_j^{(k-1)}),&\gamma^{(k)}=(\gamma_j^{(k-1)})_{1\leq j\leq 3}\in(\Gamma^{(k-1)})^{3},\\
&n^{(k)}=(n_j^{(k-1)})_{1\leq j\leq {3}}\in\prod_{j=1}^{3}\cN^{(k-1,\gamma_j^{(k-1)})},\\
& k\geq2.
\end{cases}
\end{align}
It follows from induction that
\begin{align}\label{as2}
\cc\ca\cs(n^{(k)})=\sum_{j=1}^{2\sigma(\gamma^{(k)})}(-1)^{j-1}m_j;
\end{align}
see also the Feynman diagram in the first paper \cite{DLX24I}.

\subsection{Main Steps of Proof}

The proof of Theorem \ref{dnlsth} is divided into the following subsubsections.

\subsubsection{Infinite-Dimensional ODEs, Picard Iteration and Combinatorial Tree}

In this subsubsection we reduce the quasi-periodic Cauchy problem for \eqref{dnls} to an associated infinite system of coupled ordinary differential equations for Fourier coefficients. Then we define the Picard iteration to approximate them.

{Since the frequency vector $\omega$ is rationally independent (this implies that $\{e^{i\langle n\rangle x}\}$ is orthogonal w.r.t. the inner product in the mean sense)}, plugging \eqref{ses} into \eqref{dnls} yields the following nonlinear infinite system of coupled ODEs
\begin{align}
\frac{{\rm d}}{{\rm d}t}\cc(t,n)+{\rm i}\langle n\rangle^2\cc(t,n)={\rm i}\langle n\rangle\sum_{\substack{n_1,n_2,n_3\in\bZ^\nu\\n_1-n_2+n_3=n}}\prod_{j=1}^{3}\{\cc(t,n_j)\}^{\ast^{[j-1]}},\quad\forall n\in\bZ^\nu.
\end{align}
{It follows from Duhamel's principle} that they are equivalent to the following integral form
\begin{align}
\cc(t,n)=e^{-{\rm i}\langle n\rangle^2t}\cc(n)+{\rm i}\langle n\rangle\int_0^te^{-{\rm i}\langle n\rangle^2(t-s)}\sum_{\substack{n_1,n_2,n_3\in\bZ^\nu\\n_1-n_2+n_3=n}}\prod_{j=1}^{3}\{\cc(s,n_j)\}^{\ast^{[j-1]}}{\rm d}s,\quad\forall n\in\bZ^\nu.
\end{align}
To approximate the unknown Fourier coefficient $\cc(t,n)$, define the Picard sequence $\{\cc_k(t,n)\}_{k\geq0}$ by choosing the linear solution as the initial guess
\begin{align}\label{c0}
\cc_0(t,n)=e^{-{\rm i}\langle n\rangle^2t}c(n)
\end{align}
and successively defining ${{\cc}_k}_{k\geq1}$ as follows:
\begin{align}\label{gsj}
\cc_k(t,n)=\cc_0(t,n)+{\rm i}\langle n\rangle\int_0^te^{-{\rm i}\langle n\rangle^2(t-s)}\sum_{\substack{n_1,n_2,n_3\in\bZ^\nu\\n_1-n_2+n_3=n}}\prod_{j=1}^{3}\{\cc_{k-1}(s,n_j)\}^{\ast^{[j-1]}}{\rm d}s,\quad\forall k\geq1,
\end{align}

This iteration is complicated; see Remark 3.2 in \cite{DLX24I}. To overcome this difficulty, an explicit combinatorial method with Feynman diagram and {\bf pcc} label method is applied to analyze the Picard iteration.

%
{Define $\tC,\tI$ and $\tF$ as follows:
\begin{align*}
&\tC^{(k,\gamma^{(k)})}(n^{(k)})=\\
&
\begin{cases}
c\left(\cc\ca\cs(n^{(k)})\right),&0=\gamma^{(k)}\in\Gamma^{(k)}, n^{(k)}\in\cN^{(k,0)},k\geq1;\\
\prod_{j=1}^{3}\left\{c(n_j)\right\}^{\ast^{[j-1]}},&1=\gamma^{(1)}\in\Gamma^{(1)}, n^{(1)}\in\cN^{(1,1)};\\
\prod_{j=1}^{3}\left\{\tC^{(k-1,\gamma_j^{(k-1)})}(n^{(k-1)}_j)\right\}^{\ast^{[j-1]}},
&\gamma^{(k)}=(\gamma_j^{(k-1)})_{1\leq j\leq3}\in(\Gamma^{(k-1)})^3, \\
&n^{(k)}=(n_j^{(k-1)})_{1\leq j\leq3}\in\prod_{j=1}^3\cN^{(k-1,\gamma_j^{(k-1)})},\\
&k\geq2.
\end{cases}\\
&\tI^{(k,\gamma^{(k)})}(t,n^{(k)})=\\
&
\begin{cases}
e^{-{\rm i}\left\langle\cc\ca\cs(n^{(k)})\right\rangle^2t},&0=\gamma^{(k)}\in\Gamma^{(k)}, \\
&n^{(k)}\in\cN^{(k,0)},\\
&k\geq1;\\
\int_0^te^{-{\rm i}\left\langle\cc\ca\cs(n^{(1)})\right\rangle^2(t-s)}\prod_{j=1}^{3}\left\{e^{-{\rm i}\langle n_j\rangle^2s}\right\}^{\ast^{[j-1]}}{\rm d}s,&1=\gamma^{(1)}\in\Gamma^{(1)}, \\
&n^{(1)}\in\cN^{(1,1)};\\
\int_0^te^{-{\rm i}\left\langle\cc\ca\cs(n^{(k)})\right\rangle^2(t-s)}\prod_{j=1}^{3}\left\{\cI^{(k-1,\gamma_j^{(k-1)})}(s,n_j^{(k-1)})\right\}^{\ast^{[j-1]}}{\rm d}s,\\[1mm]
&\hspace{-5cm}\gamma^{(k)}=(\gamma_j^{(k-1)})_{1\leq j\leq3}\in(\Gamma^{(k-1)})^3, \\
&\hspace{-5cm}n^{(k)}=(n_j^{(k-1)})_{1\leq j\leq3}\in\prod_{j=1}^3\cN^{(k-1,\gamma_j^{(k-1)})},k\geq2.
\end{cases}\\
&\tF^{(k,\gamma^{(k)})}(n^{(k)})=\\
&
\begin{cases}
1,&0=\gamma^{(k)}\in\Gamma^{(k)}, n^{(k)}\in\cN^{(k,0)},\\
&k\geq1;\\
{\rm i}\langle\cc\ca\cs(n^{(1)})\rangle,&1=\gamma^{(1)}\in\Gamma^{(1)}, n^{(1)}\in\cN^{(1,1)};\\
{\rm i}\langle\cc\ca\cs(n^{(k)})\rangle\prod_{j=1}^3\{\tF^{(k-1,\gamma_j^{(k-1)})}(n_j^{(k-1)})\}^{\ast^{j-1}},
&\gamma^{(k)}=(\gamma_j^{(k-1)})_{1\leq j\leq3}\in(\Gamma^{(k-1)})^3, \\
&n^{(k)}=(n_j^{(k-1)})_{1\leq j\leq3}\\
&\in\prod_{j=1}^3\cN^{(k-1,\gamma_j^{(k-1)})},\\
&k\geq2.
\end{cases}
\end{align*}
}

%
%
%

By induction, the Picard sequence enjoys the following combinatorial tree form \eqref{ckk}.
\begin{lemm}\label{lemcc}
For all $k\geq1$,
\begin{align}\label{ckk}
\cc_k(t,n)=\sum_{\gamma^{(k)}\in\Gamma^{(k)}}\sum_{\substack{n^{(k)}\in\cN^{(k,\gamma^{(k)})}\\\cc\ca\cs(n^{(k)})=n}}
\tC^{(k,\gamma^{(k)})}(n^{(k)})\tI^{(k,\gamma^{(k)})}(t,n^{(k)})
\tF^{(k,\gamma^{(k)})}(n^{(k)}).
\end{align}
\end{lemm}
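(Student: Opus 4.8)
The plan is to prove \eqref{ckk} by induction on $k$, the only engine being the recursive definitions of $\Gamma^{(k)}$, $\cN^{(k,\gamma^{(k)})}$, $\cc\ca\cs$, $\tC$, $\tI$, $\tF$ together with the four algebraic properties of the pcc operation $\{\cdot\}^{\ast^{[\cdot]}}$ recorded in \autoref{pcc}. For the base case $k=1$ I would simply read off the two branches $\gamma^{(1)}\in\{0,1\}$ from the definitions. The branch $\gamma^{(1)}=0$ forces $n^{(1)}=n$ through the constraint $\cc\ca\cs(n^{(1)})=n$ and reproduces $\cc_0(t,n)=e^{-{\rm i}\langle n\rangle^2 t}c(n)$, since $\tC^{(1,0)}=c(n)$, $\tI^{(1,0)}=e^{-{\rm i}\langle n\rangle^2 t}$, and $\tF^{(1,0)}=1$. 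The branch $\gamma^{(1)}=1$ reproduces the Duhamel integral term in \eqref{gsj}: factoring $\cc_0(s,n_j)=e^{-{\rm i}\langle n_j\rangle^2 s}\,c(n_j)$ and applying multiplicativity $(z_1z_2)^{\ast^{[m]}}=z_1^{\ast^{[m]}}z_2^{\ast^{[m]}}$ separates the phase factors, which assemble into $\tI^{(1,1)}$, from the data factors, which assemble into $\tC^{(1,1)}$, while the external frequency ${\rm i}\langle n\rangle$ becomes $\tF^{(1,1)}$ under the constraint $n_1-n_2+n_3=n=\cc\ca\cs(n^{(1)})$.

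For the inductive step, assuming \eqref{ckk} at level $k-1$, I would insert that expansion for each of $\cc_{k-1}(s,n_1),\cc_{k-1}(s,n_2),\cc_{k-1}(s,n_3)$ into the recursion \eqref{gsj}. Applying additivity $(z_1+z_2)^{\ast^{[m]}}=z_1^{\ast^{[m]}}+z_2^{\ast^{[m]}}$ pushes each label $\{\cdot\}^{\ast^{[j-1]}}$ through the inner sums over $\gamma_j^{(k-1)}$ and $n_j^{(k-1)}$, and multiplicativity distributes it across the triple product $\tC^{(k-1,\gamma_j^{(k-1)})}\tI^{(k-1,\gamma_j^{(k-1)})}\tF^{(k-1,\gamma_j^{(k-1)})}$. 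Taking the product over $j=1,2,3$ then merges the three independent summation variables into a single $\gamma^{(k)}=(\gamma_j^{(k-1)})_{1\le j\le 3}\in(\Gamma^{(k-1)})^3$ and a single $n^{(k)}=(n_j^{(k-1)})_{1\le j\le 3}\in\cN^{(k,\gamma^{(k)})}$. By the very definitions for $k\ge 2$, the three grouped factors are then $\tC^{(k,\gamma^{(k)})}(n^{(k)})$, the $\tF$-product, and the $\tI$-product sitting inside $\int_0^t e^{-{\rm i}\langle\cdot\rangle^2(t-s)}(\cdots)\,{\rm d}s$; the external factor ${\rm i}\langle n\rangle$ attaches to the $\tF$-product to form $\tF^{(k,\gamma^{(k)})}(n^{(k)})$, and the Duhamel kernel wraps the $\tI$-product to form $\tI^{(k,\gamma^{(k)})}(t,n^{(k)})$.

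The one point needing genuine care — and where I expect the bookkeeping to be heaviest — is the matching of constraints under this merge. Here I would invoke the additivity identity \eqref{as1}--\eqref{as2}, namely $\cc\ca\cs(n^{(k)})=\sum_{j=1}^{3}(-1)^{j-1}\cc\ca\cs(n_j^{(k-1)})$, to verify that the convolution condition $n_1-n_2+n_3=n$ in \eqref{gsj} together with the three inherited inner constraints $\cc\ca\cs(n_j^{(k-1)})=n_j$ is exactly equivalent to the single merged constraint $\cc\ca\cs(n^{(k)})=n$. This equivalence simultaneously yields $\langle n\rangle=\langle\cc\ca\cs(n^{(k)})\rangle$, so that both the external frequency ${\rm i}\langle n\rangle$ and the kernel phase $e^{-{\rm i}\langle n\rangle^2(t-s)}$ promote to their $\cc\ca\cs(n^{(k)})$-forms correctly.

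Finally, the surviving linear term $\cc_0(t,n)$ is precisely the $\gamma^{(k)}=0$ branch of the right-hand side of \eqref{ckk}, which again forces $n^{(k)}=n$ and evaluates to $e^{-{\rm i}\langle n\rangle^2 t}c(n)$. Since $\Gamma^{(k)}=\{0\}\cup(\Gamma^{(k-1)})^3$ by \eqref{g}, adjoining this branch to the merged sum over $(\Gamma^{(k-1)})^3$ exhausts the range of $\gamma^{(k)}$ and closes the induction. No analytic estimate is involved at this stage; the entire content is the compatibility of the recursive combinatorial definitions with the two distributive laws for $\{\cdot\}^{\ast^{[\cdot]}}$, and the identification of the nested index constraints with the single combinatorial alternating sum.
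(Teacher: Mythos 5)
Your proposal is correct and follows essentially the same route as the paper's own proof: induction on $k$ via the recursion \eqref{gsj}, distributing the $\ast^{[\cdot]}$ labels through sums and products, merging the triple sums into a single sum over $(\Gamma^{(k-1)})^3$ and $\cN^{(k,\gamma^{(k)})}$, and matching the convolution constraint with $\cc\ca\cs(n^{(k)})=n$ through the additivity of $\cc\ca\cs$. No substantive differences to report.
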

{
\begin{proof}
Notice that for all $\gamma^{(k)}=0\in\Gamma^{(k)}$, where $k\geq1$, we have
\[\cc_0(t,n)=\sum_{\substack{n^{(k)}\in\cN^{(k,0)}\\\cc\ca\cs(n^{(k)})=n}}\tC^{(k,0)}(n^{(1)})\tI^{(k,0)}(t,n^{(k)})\tF^{(k,0)}(n^{(k)}).\]

For $k=1$, substituting \eqref{c0} into \eqref{gsj}, with the properties of $\ast^{[\cdot]}$, yields that
\begin{align*}
\cc_1(t,n)-\cc_0(t,n)
&=\sum_{\substack{n_1,n_2,n_3\in\bZ^\nu\\n_1-n_2+n_3=n}}{\rm i}\langle n\rangle\prod_{j=1}^3\{\cc(n_j)\}^{\ast^{[j-1]}}\int_0^te^{-{\rm i}\langle n\rangle^2(t-s)}\prod_{j=1}^3\{e^{-{\rm i}\langle n_j\rangle^2}\}^{\ast^{[j-1]}}{\rm d}s\\
&=\sum_{\substack{n^{(1)}\in\cN^{(1,1)}\\\cc\ca\cs(n^{(1)})=n}}\tC^{(1,1)}(n^{(1)})\tI^{(1,1)}(t,n^{(1)})\tF^{(1,1)}(n^{(1)}).
\end{align*}
Hence \eqref{ckk} holds for $k=1$.

Let $k\geq2$. Assume that \eqref{ckk} is true for all $1<k^\prime<k$. For $k$, it follows from \eqref{gsj} and the properties of the label of complex conjugate that
\begin{align*}
&\cc_k(t,n)-\cc_0(t,n)\\
=&{\rm i}\langle n\rangle\int_0^te^{-{\rm i}\langle n\rangle^2(t-s)}\sum_{\substack{n_1,n_2,n_3\in\bZ^\nu\\n_1-n_2+n_3=n}}\prod_{j=1}^3\\
&\Bigg\{\sum_{\gamma_j^{(k-1)}\in\Gamma^{(k-1)}}\sum_{\substack{n_j^{(k-1)}\in\cN^{(k-1,\gamma_j^{(k-1)})}\\\cc\ca\cs(n_j^{(k-1)})=n_j}}
\tC^{(k-1,\gamma_j^{(k-1)})}(n_j^{(k-1)})\tI^{(k-1,\gamma_j^{(k-1)})}(s,n_j^{(k-1)})
\\
&\tF^{(k-1,\gamma_j^{(k-1)})}(n_j^{(k-1)})\Bigg\}^{\ast^{[j-1]}}
\\
=&{\rm i}\langle n\rangle\int_0^te^{-{\rm i}\langle n\rangle^2(t-s)}\sum_{\substack{n_1,n_2,n_3\in\bZ^\nu\\n_1-n_2+n_3=n}}\prod_{j=1}^3\sum_{\gamma_j^{(k-1)}\in\Gamma^{(k-1)}}
\sum_{\substack{n_j^{(k-1)}\in\cN^{(k-1,\gamma_j^{(k-1)})}\\\cc\ca\cs(n_j^{(k-1)})=n_j}}\\
&\{\tC^{(k-1,\gamma_j^{(k-1)})}(n_j^{(k-1)})\}^{\ast^{[j-1]}}
\{\tI^{(k-1,\gamma_j^{(k-1)})}(s,n_j^{(k-1)})\}^{\ast^{[j-1]}}
\{\tF^{(k-1,\gamma_j^{(k-1)})}(n_j^{(k-1)})\}^{\ast^{[j-1]}}\\
=&{\rm i}\langle n\rangle\int_0^te^{-{\rm i}\langle n\rangle^2(t-s)}\sum_{\substack{n_1,n_2,n_3\in\bZ^\nu\\n_1-n_2+n_3=n}}
\sum_{\substack{\gamma_j^{(k-1)}\in\Gamma^{(k-1)}\\j=1,2,3}}
\sum_{\substack{n_j^{(k-1)}\in\cN^{(k-1,\gamma_j^{(k-1)})}\\\cc\ca\cs(n_j^{(k-1)})=n_j\\j=1,2,3}}\\
&\prod_{j=1}^3\{\tC^{(k-1,\gamma_j^{(k-1)})}(n_j^{(k-1)})\}^{\ast^{[j-1]}}\prod_{j=1}^3
\{\tI^{(k-1,\gamma_j^{(k-1)})}(s,n_j^{(k-1)})\}^{\ast^{[j-1]}}\\
&\prod_{j=1}^3\{\tF^{(k-1,\gamma_j^{(k-1)})}(n_j^{(k-1)})\}^{\ast^{[j-1]}}\\
=&\sum_{\substack{\gamma_j^{(k-1)}\in\Gamma^{(k-1)}\\j=1,2,3}}\sum_{\substack{n_1,n_2,n_3\in\bZ^\nu\\n=n_1-n_2+n_3}}
\sum_{\substack{n_j=\cc\ca\cs(n_j^{(k-1)})\\n_j^{(k-1)}\in\Gamma^{(k-1)}\\j=1,2,3}}{\rm i}\langle n\rangle\prod_{j=1}^3\{\tF^{(k-1,\gamma_j^{(k-1)})}(n_j^{(k-1)})\}^{\ast^{[j-1]}}\\
&\prod_{j=1}^3\{\tC^{(k-1,\gamma_j^{(k-1)})}(n_j^{(k-1)})\}^{\ast^{[j-1]}}
\int_0^te^{-{\rm i}\langle n\rangle^2}\prod_{j=1}^3\{\tI^{(k-1,\gamma_j^{(k-1)})}(s,n_j^{(k-1)})\}^{\ast^{[j-1]}}{\rm d}s\\
=&\sum_{\gamma^{(k)}\in(\Gamma^{(k-1)})^3}\sum_{\substack{n^{(k)}\in\cN^{(k,\gamma^{(k)})}\\\cc\ca\cs(n^{(k)})=n}}
\tC^{(k,\gamma^{(k)})}(n^{(k)})\tI^{(k,\gamma^{(k)})}(t,n^{(k)})\tF^{(k,\gamma^{(k)})}(n^{(k)}).
\end{align*}
This shows that \eqref{ckk} holds for $k$, and for all $k\in\bN$ by induction. The proof of Lemma \ref{lemcc} is completed.
\end{proof}
}

\subsubsection{The Picard Sequence is Exponentially Decaying}

In this subsubsection we will prove that the Picard sequence is $\kappa/2$-exponentially decaying with a slightly worse decay rate compared with the initial Fourier data.

First we estimate $\tC,\tI$ and $\tF$ in \eqref{ccc} independently.

For $\tC$, it follows from induction and the {\bf pcc} label method that it has a good combinatorial structure \eqref{ccc}.
\begin{lemm}\label{cscscs}
For all $k\geq 1$, set $n^{(k)}=(m_j)_{1\leq j\leq2\sigma(\gamma^{(k)})}\in\cN^{(k,\gamma^{(k)})}$, we have
\begin{align}\label{ccc}
\tC^{(k,\gamma^{k})}(n^{(k)})=\prod_{j=1}^{2\sigma(\gamma^{(k)})}\left\{\cc(m_j)\right\}^{^{\ast^{[j-1]}}}.
\end{align}
\end{lemm}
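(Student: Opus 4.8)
The plan is to prove \eqref{ccc} by induction on the tree depth $k$, feeding the recursive definition of $\tC$ through the four algebraic rules for the {\bf pcc} label $\ast^{[\cdot]}$ collected in \autoref{pcc}. The single structural fact that makes everything fit together is that $2\sigma(\gamma^{(k)})$ is \emph{odd} on every branch (equivalently $2\sigma=2\ell+1$ by \eqref{sil}); this is what I expect to be the crux of the argument, so I single it out in advance.

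\emph{Base case $k=1$.} There are two branches. If $\gamma^{(1)}=0$ then $\cN^{(1,0)}=\bZ^\nu$ has a single component $m_1=n^{(1)}=\cc\ca\cs(n^{(1)})$ and $2\sigma(0)=1$, so by definition $\tC^{(1,0)}(n^{(1)})=\cc(\cc\ca\cs(n^{(1)}))=\{\cc(m_1)\}^{\ast^{[0]}}$, which is \eqref{ccc}. If $\gamma^{(1)}=1$ then $2\sigma(1)=3$, $m_j=n_j$, and the definition gives $\tC^{(1,1)}(n^{(1)})=\prod_{j=1}^3\{\cc(n_j)\}^{\ast^{[j-1]}}=\prod_{j=1}^{3}\{\cc(m_j)\}^{\ast^{[j-1]}}$ directly.

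\emph{Inductive step $k\ge 2$.} The branch $\gamma^{(k)}=0$ is handled exactly as above (one component, exponent $[0]$). So suppose $\gamma^{(k)}=(\gamma_j^{(k-1)})_{1\le j\le3}$. Write $s_j:=2\sigma(\gamma_j^{(k-1)})$ and split each sub-lattice point as $n_j^{(k-1)}=(m_{j,1},\dots,m_{j,s_j})$, so that the concatenation $n^{(k)}=(n_1^{(k-1)},n_2^{(k-1)},n_3^{(k-1)})$ assigns the component $m_{j,i}$ the global index $\ell=\big(\sum_{j'<j}s_{j'}\big)+i$, and $2\sigma(\gamma^{(k)})=s_1+s_2+s_3$. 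By the induction hypothesis each factor equals $\tC^{(k-1,\gamma_j^{(k-1)})}(n_j^{(k-1)})=\prod_{i=1}^{s_j}\{\cc(m_{j,i})\}^{\ast^{[i-1]}}$. Substituting into the recursion for $\tC$, then distributing the outer label $\ast^{[j-1]}$ across the inner product by $(z_1z_2)^{\ast^{[m]}}=z_1^{\ast^{[m]}}z_2^{\ast^{[m]}}$ and composing exponents by $(z^{\ast^{[m]}})^{\ast^{[m']}}=z^{\ast^{[m+m']}}$, yields
\[
\tC^{(k,\gamma^{(k)})}(n^{(k)})=\prod_{j=1}^{3}\prod_{i=1}^{s_j}\{\cc(m_{j,i})\}^{\ast^{[(i-1)+(j-1)]}}.
\]

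\emph{The parity matching.} It remains to check that, under the global relabelling $m_{j,i}\mapsto m_\ell$, the exponent $[(i-1)+(j-1)]$ coincides with $[\ell-1]$, i.e. $\sum_{j'<j}s_{j'}+i-1\equiv (i-1)+(j-1)\pmod 2$, which simplifies to $\sum_{j'<j}s_{j'}\equiv j-1\pmod 2$. This is precisely where the oddness of each $s_{j'}=2\sigma(\gamma_{j'}^{(k-1)})$ enters: since $s_{j'}\equiv 1\pmod2$, the left-hand side is $\equiv j-1\pmod 2$, as required. Hence the doubly-indexed product collapses to $\prod_{\ell=1}^{2\sigma(\gamma^{(k)})}\{\cc(m_\ell)\}^{\ast^{[\ell-1]}}$, completing the induction. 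The only genuine subtlety is this bookkeeping of conjugation parities across the three concatenated sub-blocks; everything else is a mechanical application of the $\ast^{[\cdot]}$ rules.
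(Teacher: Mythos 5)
Your proof is correct and follows exactly the route the paper intends: the paper itself only cites \cite[Lemma 3.4]{DLX24I} for this statement, but the surrounding text announces that it ``follows from induction and the \textbf{pcc} label method,'' which is precisely your induction on $k$ combined with the multiplicativity and composition rules for $\ast^{[\cdot]}$. You also correctly isolate the one nontrivial point, namely that the concatenation of the three sub-blocks preserves the alternating conjugation pattern because each block length $2\sigma(\gamma_j^{(k-1)})$ is odd (equivalently $2\sigma=2\ell+1$ by \eqref{sil}), which is indeed the crux of the bookkeeping.
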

\begin{proof}
See the first paper \cite[Lemma 3.4]{DLX24I}.
\end{proof}

With the help of Lemma \ref{cscscs}, under the exponential decay condition \eqref{ed} of the initial Fourier data, an estimate of $\tC$ is obtained as follows.
\begin{lemm}\label{lsds}
If the initial Fourier data $\cc$ satisfies the exponential decay condition \eqref{ed}, then
\begin{align}
|\tC^{(k,\gamma^{(k)})}(n^{(k)})|\leq B^{\sigma(\gamma^{(k)})}e^{-\kappa|n^{(k)}|},\quad\forall k\geq1.
\end{align}
\end{lemm}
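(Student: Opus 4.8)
The plan is to reduce everything to the explicit product structure already recorded in Lemma \ref{cscscs} and then apply the hypothesis \eqref{ed} factor by factor. Concretely, writing $n^{(k)}=(m_j)_{1\leq j\leq 2\sigma(\gamma^{(k)})}\in\cN^{(k,\gamma^{(k)})}$, identity \eqref{ccc} gives
\[
\tC^{(k,\gamma^{(k)})}(n^{(k)})=\prod_{j=1}^{2\sigma(\gamma^{(k)})}\left\{\cc(m_j)\right\}^{\ast^{[j-1]}},
\]
so the entire estimate is an estimate on a finite product of (conjugated) initial Fourier coefficients. The number of factors is exactly $2\sigma(\gamma^{(k)})$, which is precisely $\dim_{\bZ^\nu}\cN^{(k,\gamma^{(k)})}$ by \eqref{dimsi}; this is the bookkeeping that will make the power of $B$ come out correctly.

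First I would pass to absolute values and observe that the $\ast^{[\cdot]}$ labels are invisible to the modulus: since $|\bar z|=|z|$, each factor satisfies $\big|\{\cc(m_j)\}^{\ast^{[j-1]}}\big|=|\cc(m_j)|$, regardless of the parity of $j-1$. Hence
\[
\big|\tC^{(k,\gamma^{(k)})}(n^{(k)})\big|=\prod_{j=1}^{2\sigma(\gamma^{(k)})}|\cc(m_j)|.
\]
Next I would insert the exponential decay hypothesis \eqref{ed}, namely $|\cc(m_j)|\leq B^{1/2}e^{-\kappa|m_j|}$, into each of the $2\sigma(\gamma^{(k)})$ factors. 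Collecting the constants yields $\big(B^{1/2}\big)^{2\sigma(\gamma^{(k)})}=B^{\sigma(\gamma^{(k)})}$, while the exponentials combine multiplicatively into a single exponential with exponent $-\kappa\sum_{j=1}^{2\sigma(\gamma^{(k)})}|m_j|$. Under the convention that $|n^{(k)}|$ denotes the aggregate norm $\sum_{j=1}^{2\sigma(\gamma^{(k)})}|m_j|$ of the block vector $n^{(k)}$, the latter sum is exactly $\kappa|n^{(k)}|$, and the claimed bound $B^{\sigma(\gamma^{(k)})}e^{-\kappa|n^{(k)}|}$ follows immediately for every $k\geq 1$.

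There is no genuine analytic obstacle here: once Lemma \ref{cscscs} is in hand, the statement is a one-line multiplicative estimate. The only point requiring care — and the one I would flag explicitly — is the combinatorial consistency of the exponents, i.e. verifying that the product really carries exactly $2\sigma(\gamma^{(k)})$ factors so that the half-powers $B^{1/2}$ assemble into the stated integer-or-half-integer power $B^{\sigma(\gamma^{(k)})}$, and that the chosen norm on the combinatorial lattice space $\cN^{(k,\gamma^{(k)})}$ is the additive one compatible with $\eqref{ed}$. Both are guaranteed by \eqref{dimsi} together with the recursive definition \eqref{s} of $\sigma$, so the induction implicit in Lemma \ref{cscscs} does all the heavy lifting and the present lemma is purely a corollary of that structural identity and the decay hypothesis.
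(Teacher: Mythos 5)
Your argument is correct and coincides with the paper's own proof: both invoke Lemma \ref{cscscs} to write $\tC^{(k,\gamma^{(k)})}$ as a product of $2\sigma(\gamma^{(k)})$ conjugated Fourier coefficients, note that the $\ast^{[\cdot]}$ labels disappear under the modulus, apply \eqref{ed} to each factor, and use the additive convention $|n^{(k)}|=\sum_j|m_j|$ to assemble the exponential. No further comment is needed.
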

\begin{proof}
It follows from Lemma \ref{cscscs} and the exponential decay condition \eqref{ed} that
\begin{align*}
|\tC^{(k,\gamma^{k})}(n^{(k)})|&=\prod_{j=1}^{2\sigma(\gamma^{(k)})}|\left\{\cc(m_j)\right\}^{^{\ast^{[j-1]}}}|\\
&\leq\prod_{j=1}^{2\sigma(\gamma^{(k)})}B^{1/2}e^{-\kappa|m_j|}\\
&=B^{\sigma(\gamma^{(k)})}e^{-\kappa|n^{(k)}|}.
\end{align*}
Here $|n^{(k)}|=\sum_{j=1}^{2\sigma(\gamma^{(k)})}|m_j|$. This completes the proof of Lemma \ref{lsds}.
\end{proof}

For $\tI$, we have an estimate by induction as follows.
\begin{lemm}
For all $k\geq1$,
\begin{align}
|\tI^{(k,\gamma^{(k)})}(t,n^{(k)})|\leq\frac{t^{\ell(\gamma^{(k)})}}{\tD(\gamma^{(k)})},
\end{align}
where $\ell(\gamma^{(k)})=\sigma(\gamma^{(k)})-\frac{1}{2}$ on each branch $\gamma^{(k)}$, and $\tD$ is defined as follows:
\begin{align}\label{pi}
\tD(\gamma^{(k)})=
\begin{cases}
1,&\gamma^{(k)}=0\in\Gamma^{(k)},k\geq1;\\
1,&\gamma^{(1)}=1\in\Gamma^{(1)};\\
\ell(\gamma^{(k)})\prod_{j=1}^{3}\tD(\gamma_j^{(k-1)}),&\gamma^{(k)}=(\gamma_j^{(k-1)})_{1\leq j\leq 3}\in(\Gamma^{(k-1)})^{3},k\geq2.
\end{cases}
\end{align}
\end{lemm}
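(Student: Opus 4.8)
The plan is to prove the bound by induction on $k$, using the recursive definition of $\tI^{(k,\gamma^{(k)})}$ together with the additive recursion $\ell(\gamma^{(k)})=1+\sum_{j=1}^{3}\ell(\gamma_j^{(k-1)})$ coming from \eqref{ee}. The only analytic input is that each oscillatory exponential appearing in $\tI$ is unimodular, so after taking absolute values every estimate reduces to an elementary integral of a power of $s$.

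For the base cases, when $\gamma^{(k)}=0$ one has $|\tI^{(k,0)}(t,n^{(k)})|=1$, which agrees with $t^{\ell(0)}/\tD(0)=t^{0}/1=1$ since $\ell(0)=0$; and when $\gamma^{(1)}=1$, the integrand is a product of unimodular factors, so $|\tI^{(1,1)}(t,n^{(1)})|\leq t=t^{\ell(1)}/\tD(1)$. Both match the claimed bound.

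For the inductive step ($k\geq2$), I would assume the claim for each sub-branch $\gamma_j^{(k-1)}$, bound the outer exponential in the recursive formula for $\tI^{(k,\gamma^{(k)})}$ by one, and pass the modulus through the product; since $|z^{\ast^{[m]}}|=|z|$, the labelling by powers of $\ast^{[\cdot]}$ is invisible to this estimate. Applying the inductive hypothesis to each factor leaves $\int_0^t s^{\sum_{j}\ell(\gamma_j^{(k-1)})}\,{\rm d}s$ multiplied by the constant $\prod_{j}\tD(\gamma_j^{(k-1)})^{-1}$. Here the key identity $\sum_{j=1}^{3}\ell(\gamma_j^{(k-1)})=\ell(\gamma^{(k)})-1$ turns the integral into $t^{\ell(\gamma^{(k)})}/\ell(\gamma^{(k)})$, and the extra denominator factor $\ell(\gamma^{(k)})$ is exactly what upgrades $\prod_{j}\tD(\gamma_j^{(k-1)})$ to $\tD(\gamma^{(k)})$ via \eqref{pi}. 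This closes the induction.

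There is no genuine analytic obstacle here; the whole content is bookkeeping. The one point worth highlighting is conceptual rather than technical: the factor $\ell(\gamma^{(k)})$ built into the definition of $\tD$ is precisely the denominator produced by integrating $s^{\ell(\gamma^{(k)})-1}$ over $[0,t]$, so $\tD(\gamma^{(k)})$ is nothing but the accumulated tree-factorial of denominators generated by the nested time integrations along the Feynman diagram, and verifying the lemma amounts to checking that this recursion is self-consistent at each node. The only place demanding care is confirming that the complex-conjugate labels do not interfere with the modulus bound, which the identity $|z^{\ast^{[m]}}|=|z|$ settles at once.
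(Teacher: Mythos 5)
Your proof is correct and follows the expected route: the paper does not reprove this lemma but simply cites \cite[Lemma 3.6]{DLX24I}, and your induction --- unimodularity of the oscillatory exponentials, the invariance $|z^{\ast^{[m]}}|=|z|$, and the identity $\sum_{j=1}^{3}\ell(\gamma_j^{(k-1)})=\ell(\gamma^{(k)})-1$ turning $\int_0^t s^{\ell(\gamma^{(k)})-1}\,{\rm d}s$ into $t^{\ell(\gamma^{(k)})}/\ell(\gamma^{(k)})$ so that the recursion \eqref{pi} for $\tD$ closes --- is precisely the argument that reference supplies.
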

\begin{proof}
See the first paper \cite[Lemma 3.6]{DLX24I}.
\end{proof}
Compared with \cite[Lemma 3.7]{DLX24I}, we need to re-estimate the term $\tF$ in \eqref{dnls} due to the growth factor $|\cc\ca\cs(n^{(k)})|$, which is also different from $|\mu(n^{(k)})|$ appearing in \cite{DG2017JAMS}, and obtain the following result.
\begin{lemm}\label{alsp}
For all $k\geq1$, we have
\begin{align}
|\tF^{(k,\gamma^{(k)})}(n^{(k)})|
&\leq|\omega|^{\ell(\gamma^{(k)})}\sum_{\alpha=(\alpha_j)_{1\leq j\leq2\sigma(\gamma^{(k)})}\in\mathfrak R^{(k,\gamma^{(k)})}}\prod_{j}|(n^{(k)})_j|^{\alpha_j},\label{cie}
\end{align}
where
\begin{align*}
\mathfrak R^{(k,\gamma^{(k)})}&:=
\begin{cases}
\{0\in\mathbb Z\}, &\gamma^{(k)}=0\in\Gamma^{(k)}, k\geq1;\\
\{(1,0,0),(0,1,0),(0,0,1)\}, &\gamma^{(1)}=1\in\Gamma^{(1)};\\
\prod_{j=1}^{3}\mathfrak R^{(k-1,\gamma_j^{(k-1)})}+\ce^{(k,\gamma^{(k)})}, &\gamma^{(k)}=(\gamma_j^{(k-1)})_{1\leq j\leq3}\\
&\in\prod_{j=1}^{3}\mathfrak N^{(k-1,\gamma_j^{(k-1)})},k\geq2,
\end{cases}
\end{align*}
and
\[\ce^{(k,\gamma^{(k)})}:=\{\alpha\in\mathbb Z^{2\sigma(\gamma^{(k)})}: |\alpha|=1,\alpha_j\geq0\}.\]
\end{lemm}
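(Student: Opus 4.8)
The plan is to argue by induction on $k$, following the three-case recursion that defines both $\tF^{(k,\gamma^{(k)})}$ and the index collection $\mathfrak R^{(k,\gamma^{(k)})}$. The two base cases are immediate. When $\gamma^{(k)}=0$ one has $\tF\equiv1$, while $\ell(0)=0$ and $\mathfrak R^{(k,0)}=\{0\}$, so both sides equal $1$. When $\gamma^{(1)}=1$, writing $n^{(1)}=(n_1,n_2,n_3)$ and using \eqref{as2},
\[
|\tF^{(1,1)}(n^{(1)})|=|\langle\cc\ca\cs(n^{(1)})\rangle|=|(n_1-n_2+n_3)\cdot\omega|\le|\omega|\,(|n_1|+|n_2|+|n_3|),
\]
which is precisely $|\omega|^{\ell(1)}\sum_{\alpha\in\mathfrak R^{(1,1)}}\prod_j|n_j|^{\alpha_j}$ since $\ell(1)=1$ and $\mathfrak R^{(1,1)}=\{(1,0,0),(0,1,0),(0,0,1)\}$. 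So the only analytic input is the trivial bound $|m\cdot\omega|\le|\omega|\,|m|$.

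For the inductive step ($k\ge2$) I would first take absolute values in the recursion for $\tF$. Since $|\{z\}^{\ast^{[j-1]}}|=|z|$, the complex-conjugation labels drop out and the product factorises cleanly,
\[
|\tF^{(k,\gamma^{(k)})}(n^{(k)})|=|\langle\cc\ca\cs(n^{(k)})\rangle|\prod_{j=1}^{3}|\tF^{(k-1,\gamma_j^{(k-1)})}(n_j^{(k-1)})|.
\]
For the growth factor I invoke \eqref{as2} and $\langle m\rangle=m\cdot\omega$ to get
\[
|\langle\cc\ca\cs(n^{(k)})\rangle|\le|\omega|\sum_{i=1}^{2\sigma(\gamma^{(k)})}|(n^{(k)})_i|=|\omega|\sum_{\beta\in\ce^{(k,\gamma^{(k)})}}\prod_i|(n^{(k)})_i|^{\beta_i},
\]
the final equality being the observation that summing the monomials over the unit coordinate vectors $\beta\in\ce^{(k,\gamma^{(k)})}$ reproduces $\sum_i|(n^{(k)})_i|$. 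Applying the induction hypothesis to each of the three subtree factors and expanding the resulting product of sums is the heart of the matter.

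Because the three subtrees occupy disjoint coordinate blocks of $n^{(k)}$, the product $\prod_{j=1}^{3}\bigl(\sum_{\alpha^{(j)}\in\mathfrak R^{(k-1,\gamma_j^{(k-1)})}}\prod_i|(n_j^{(k-1)})_i|^{\alpha^{(j)}_i}\bigr)$ expands into a single sum over the Cartesian product $\prod_{j=1}^{3}\mathfrak R^{(k-1,\gamma_j^{(k-1)})}$, the exponent vectors being concatenated into $\bZ^{2\sigma(\gamma^{(k)})}$ (using $2\sigma(\gamma^{(k)})=\sum_{j}2\sigma(\gamma_j^{(k-1)})$ from \eqref{s}). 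Multiplying by the $\ce$-sum above shifts each such exponent vector by a unit vector, that is, Minkowski-adds $\ce^{(k,\gamma^{(k)})}$, producing exactly the collection $\mathfrak R^{(k,\gamma^{(k)})}$ of its recursion. The powers of $|\omega|$ assemble as $|\omega|^{1+\sum_j\ell(\gamma_j^{(k-1)})}=|\omega|^{\ell(\gamma^{(k)})}$ by the recursion \eqref{ee} for $\ell$ (the single extra factor coming from the $\langle\cc\ca\cs(n^{(k)})\rangle$ in the current vertex), which closes the induction.

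The step requiring genuine care—indeed the main obstacle—is the bookkeeping in this last combination. One must read $\mathfrak R^{(k,\gamma^{(k)})}$ as a collection counted \emph{with multiplicity}, so that the expansion of the product of sums matches $\sum_{\alpha\in\mathfrak R^{(k,\gamma^{(k)})}}\prod_i|(n^{(k)})_i|^{\alpha_i}$ term by term; otherwise distinct pairs $(\alpha,\beta)$ can collide to the same $\alpha+\beta$ under the Minkowski shift, and summing over the underlying \emph{set} would undercount and break the equality. Thus the argument is really an exact identity at each inductive stage, perturbed only by the scalar estimate $|m\cdot\omega|\le|\omega|\,|m|$; the effort lies entirely in keeping the concatenation-of-blocks structure, the unit-vector shift, and the $\ell(\gamma^{(k)})$-counting of vertices aligned with the recursive definition of $\mathfrak R^{(k,\gamma^{(k)})}$.
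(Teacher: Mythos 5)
Your proof is correct and follows essentially the same route as the paper: induction on $k$, the bound $|\langle\cc\ca\cs(n^{(k)})\rangle|\le|\omega|\sum_i|(n^{(k)})_i|$ rewritten as a sum over the unit vectors in $\ce^{(k,\gamma^{(k)})}$, and the expansion of the product of the three subtree sums into the Minkowski-sum structure defining $\mathfrak R^{(k,\gamma^{(k)})}$, with the $|\omega|$-powers assembled via the recursion for $\ell$. Your remark that $\mathfrak R^{(k,\gamma^{(k)})}$ must be read with multiplicity for the expansion to match term by term is a legitimate point of care that the paper passes over silently, but it does not change the argument.
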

\begin{proof}
Here we give a quick proof by induction, in which we do not introduce the symbol $\cP$ (see \cite[(2.15) and Lemma 2.8]{DG2017JAMS}) or the symbol $\cB$ (see \cite[(2.4) and Lemma 2.10]{MR4751185}).

The key ingredient is to control $|\cc\ca\cs(n^{(k)})|$ in a combinatorial manner of its components by introducing the new variable $\alpha$. It follows from \eqref{as2} that
\begin{align*}
\cc\ca\cs(n^{(k)})&=\sum_{j=1}^{2\sigma
(\gamma^{(k)})}(-1)^{j-1}(n^{(k)})_j\\
&\ll\sum_{j=1}^{2\sigma(\gamma^{(k)})}|(n^{(k)})_j|\\
&=\sum_{\alpha=(\alpha_j)_{1\leq j\leq2\sigma(\gamma^{(k)})}\in\ce^{(k,\gamma^{(k)})}}\prod_{j=1}^{2\sigma(\gamma^{(k)})}|(n^{(k)})_j|^{\alpha_j},
\end{align*}
where $n^{(k)}\in\cN^{(k,\gamma^{(k)})},\gamma^{(k)}\in\Gamma^{(k)}$ and $k\geq1$.

It is clear to see that \eqref{cie} holds for all $\gamma^{(k)}=0\in\Gamma^{(k)}$, where $k\geq1$, and $k=1, 1=\gamma^{(1)}\in\Gamma^{(1)},n^{(1)}\in\cN^{(1,1)}$. This shows that \eqref{cie} holds for $k=1$.

Let $k\geq2$. Assume that \eqref{cie} holds for all $1<k^\prime<k$.


For $k$, we need to consider only $\gamma^{(k)}=(\gamma_j^{(k-1)})_{1\leq j\leq3}\in\prod_{j=1}^3\Gamma^{(k-1)},n^{(k)}=(n_j^{(k-1)})_{1\leq j\leq3}\in\prod_{j=1}^3\cN^{(k-1,\gamma_j^{(k-1)})}$, it follows from the definition of $\tF$, induction hypothesis and \eqref{ee} that
\begin{align*}
&\tF^{(k,\gamma^{(k)})}(n^{(k)})\\
=&{\rm i}\langle\cc\ca\cs(n^{(k)})\rangle\prod_{j=1}^3{\tF^{(k-1,\gamma_j^{(k-1)})}}(n_j^{(k-1)})\\
\ll& |\omega|\sum_{j=1}^{2\sigma(\gamma^{(k)})}|(n^{(k)})_j|\prod_{j=1}^3|\omega|^{\ell(\gamma_j^{(k-1)})}
\sum_{\substack{\alpha^{(j)}=(\alpha^{(j)}_{i(j)})
_{1\leq i(j)\leq2\sigma(\gamma_j^{(k-1)})}\\\in\cR^{(k-1,\gamma_j^{(k-1)})}}}\prod_{i(j)=1}^{2\sigma(\gamma_j^{(k-1)})}|(n_j^{(k-1)})
_{i(j)}|^{\alpha_{i(j)}^{(j)}}
\\
=&|\omega|^{\ell(\gamma^{(k)})}\sum_{j=1}^{2\sigma(\gamma^{(k)})}|(n^{(k)})_j|\prod_{j=1}^3|\omega|^{\ell(\gamma_j^{(k-1)})}
\sum_{\alpha=(\alpha_{i(j)}^{(j)})_{\substack{1\leq j\leq 3; 1\leq i(j)\leq 2\sigma(\gamma_j^{(k-1)})\\\in\prod_{j=1}^3\cR^{(k-1,\gamma_j^{(k-1)})}}}}\prod_{j=1}^3|(n_{j}^{(k-1)})_{i(j)}|^{\alpha_{i(j)}^{(j)}}
\\
=&|\omega|^{\ell(\gamma^{(k)})}\sum_{\alpha=(\alpha_j)_{1\leq j\leq2\sigma(\gamma^{(k)})}\in\ce^{(k,\gamma^{(k)})}}\prod_{j=1}^{2\sigma(\gamma^{(k-1)})}|(n^{(k)})_j|^{\alpha_j}
\prod_{\substack{\alpha=(\alpha_j)_{1\leq j\leq2\sigma(\gamma^{(k)})}\\\in\prod_{j=1}^3\cR^{(k-1,\gamma_j^{(k-1)})}}}\prod_{j=1}^{2\sigma(\gamma^{(k)})}|(n^{(k)})_{j}|^{\alpha_j}
\\=&|\omega|^{\ell(\gamma^{(k)})}\prod_{\alpha=(\alpha_j)_{1\leq j\leq2\sigma(\gamma^{(k)})}\in\cR^{(k,\gamma^{(k)})}}\prod_{j=1}^{2\sigma(\gamma^{(k)})}|(n^{(k)})_{j}|^{\alpha_j}.
\end{align*}
%
Hence \eqref{cie} holds for $k$, and all $k\geq1$ by induction. This finishes the proof of Lemma \ref{alsp}.
\end{proof}

Putting everything together and using the equality \eqref{sil} yields the following estimates for the Picard sequence.

\begin{lemm}\label{less}
For all $k\geq1$, we have
\begin{align}\label{cksl}
&\cc_k(t,n)\nonumber\\
\ll& B^{1/2}\sum_{\gamma^{(k)}\in\Gamma^{(k)}}\frac{(B|\omega|t)^{\ell(\gamma^{(k)})}}{\tD(\gamma^{(k)})}
\sum_{\substack{\alpha=(\alpha_j)_{1\leq j\leq2\sigma(\gamma^{(k)})}\\\in\mathfrak R^{(k,\gamma^{(k)})}}}\sum_{\substack{n^{(k)}\in\cN^{(k,\gamma^{(k)})}\\\cc\ca\cs(n^{(k)})=n}}\prod_{j}|(n^{(k)})_j|^{\alpha_j}e^{-\kappa|n^{(k)}|}.
\end{align}
\end{lemm}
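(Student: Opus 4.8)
The plan is to assemble Lemma \ref{less} directly from the combinatorial tree representation of Lemma \ref{lemcc} together with the three independent per-factor estimates already established. First I would start from the identity \eqref{ckk}, pass to absolute values, and apply the triangle inequality to the two outer sums (over $\gamma^{(k)}\in\Gamma^{(k)}$ and over the constrained $n^{(k)}$), obtaining
\[
|\cc_k(t,n)|\le\sum_{\gamma^{(k)}\in\Gamma^{(k)}}\sum_{\substack{n^{(k)}\in\cN^{(k,\gamma^{(k)})}\\\cc\ca\cs(n^{(k)})=n}}
\big|\tC^{(k,\gamma^{(k)})}(n^{(k)})\big|\,\big|\tI^{(k,\gamma^{(k)})}(t,n^{(k)})\big|\,\big|\tF^{(k,\gamma^{(k)})}(n^{(k)})\big|.
\]
Every summand on the right is a product of three nonnegative quantities, which is precisely the situation to which the preceding lemmas apply factor by factor.

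Next I would substitute the three factor bounds on each branch. Lemma \ref{lsds} gives $|\tC^{(k,\gamma^{(k)})}(n^{(k)})|\le B^{\sigma(\gamma^{(k)})}e^{-\kappa|n^{(k)}|}$; the $\tI$-estimate gives $|\tI^{(k,\gamma^{(k)})}(t,n^{(k)})|\le t^{\ell(\gamma^{(k)})}/\tD(\gamma^{(k)})$; and Lemma \ref{alsp} gives $|\tF^{(k,\gamma^{(k)})}(n^{(k)})|\le|\omega|^{\ell(\gamma^{(k)})}\sum_{\alpha\in\mathfrak R^{(k,\gamma^{(k)})}}\prod_j|(n^{(k)})_j|^{\alpha_j}$. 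Multiplying these, the branch-dependent constants separate out as
\[
B^{\sigma(\gamma^{(k)})}\,t^{\ell(\gamma^{(k)})}\,|\omega|^{\ell(\gamma^{(k)})}\big/\tD(\gamma^{(k)}).
\]
The key algebraic step is the relation \eqref{sil}, $\sigma(\gamma^{(k)})=\ell(\gamma^{(k)})+\tfrac12$, which lets me write $B^{\sigma(\gamma^{(k)})}=B^{1/2}B^{\ell(\gamma^{(k)})}$ and thereby consolidate the three powers into a single global prefactor $B^{1/2}$ times $(B|\omega|t)^{\ell(\gamma^{(k)})}$, exactly matching the shape of the target.

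Finally I would reorganize the sums. Since the factor $e^{-\kappa|n^{(k)}|}\prod_j|(n^{(k)})_j|^{\alpha_j}$ and all the combinatorial weights are nonnegative, the finite sum over $\alpha\in\mathfrak R^{(k,\gamma^{(k)})}$ produced by the $\tF$-bound commutes freely with the constrained sum over $n^{(k)}$, so I can arrange them in the order displayed in \eqref{cksl}; pulling the prefactor $B^{1/2}(B|\omega|t)^{\ell(\gamma^{(k)})}/\tD(\gamma^{(k)})$ outside the $\alpha$- and $n^{(k)}$-sums then reproduces the claimed bound. I do not expect a genuine analytic obstacle here: the lemma is a bookkeeping synthesis of three previously proved estimates. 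The only point requiring care is that the exponent $\ell(\gamma^{(k)})$ appearing in the $\tI$-bound and in the $\tF$-bound is the \emph{same} branch quantity, so that the consolidation through \eqref{sil} is an exact regrouping of the constants rather than merely an inequality at the level of exponents.
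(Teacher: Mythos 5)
Your proposal is correct and follows exactly the route the paper takes: the paper's own proof is a one-line citation of Lemma \ref{lemcc}, Lemmas \ref{lsds}--\ref{alsp}, and \eqref{sil}, and your write-up simply makes explicit the triangle inequality, the factor-by-factor substitution, and the regrouping $B^{\sigma(\gamma^{(k)})}=B^{1/2}(B)^{\ell(\gamma^{(k)})}$ via \eqref{sil}. Nothing further is needed.
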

{
\begin{proof}
This is obtained by Lemma \ref{lemcc}, Lemmas \ref{lsds}-\ref{alsp}, and \eqref{sil}.
\end{proof}
}
To complete the estimate of the right hand side of \eqref{cksl}, we need the following Lemma \ref{4}.
\begin{lemm}\label{4}
For $0 < \kappa \leq 1$, we have
$$
\sum_{\substack{m=(m_1,\cdots,m_r)\in(\mathbb Z^{\nu})^r\\ \cc\ca\cs(m)=n}}\prod_{j=1}^{r}|m_j|^{\alpha_j}e^{-\kappa|m_j|}\leq e^{-\frac{\kappa}{2}|n|}(12\kappa^{-1})^{|\alpha|+\nu r}\prod_{j=1}^{r}\alpha_j!,
$$
where $\cc\ca\cs$ refers to the combinatorial alternating sums; see \eqref{as2}.
\end{lemm}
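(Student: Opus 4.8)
The plan is to peel off the global decay factor $e^{-\frac{\kappa}{2}|n|}$ using the alternating-sum constraint, then discard the constraint and reduce to a single-site estimate on $\mathbb{Z}^\nu$. First I would split each weight as $e^{-\kappa|m_j|}=e^{-\frac{\kappa}{2}|m_j|}e^{-\frac{\kappa}{2}|m_j|}$, so that $\prod_{j=1}^r e^{-\kappa|m_j|}=e^{-\frac{\kappa}{2}\sum_j|m_j|}e^{-\frac{\kappa}{2}\sum_j|m_j|}$. On the constraint set $\cc\ca\cs(m)=n$, formula \eqref{as2} gives $\cc\ca\cs(m)=\sum_{j=1}^r(-1)^{j-1}m_j$, so the triangle inequality yields $\sum_j|m_j|\geq|\cc\ca\cs(m)|=|n|$ and hence $e^{-\frac{\kappa}{2}\sum_j|m_j|}\leq e^{-\frac{\kappa}{2}|n|}$. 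Pulling this constant out of the sum leaves $e^{-\frac{\kappa}{2}|n|}$ times the analogous sum with each $e^{-\kappa|m_j|}$ replaced by $e^{-\frac{\kappa}{2}|m_j|}$.

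Next I would enlarge the constrained sum to the full sum over $(\mathbb{Z}^\nu)^r$, which only increases it and now factorizes as $\prod_{j=1}^r\Sigma(\alpha_j)$, where $\Sigma(\alpha):=\sum_{m\in\mathbb{Z}^\nu}|m|^\alpha e^{-\frac{\kappa}{2}|m|}$. It then suffices to prove the single-site bound $\Sigma(\alpha)\leq(12\kappa^{-1})^{\alpha+\nu}\alpha!$ for every integer $\alpha\geq0$, since multiplying over $j=1,\dots,r$ produces exactly $(12\kappa^{-1})^{|\alpha|+\nu r}\prod_j\alpha_j!$.

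For the single-site bound I would split the exponent as $e^{-\frac{\kappa}{2}|m|}=e^{-a|m|}e^{-b|m|}$ with $a=\frac{\kappa}{12}$ and $b=\frac{5\kappa}{12}$. The elementary scalar maximum $\sup_{t\geq0}t^\alpha e^{-at}=(\alpha/(ae))^\alpha\leq a^{-\alpha}\alpha!$, the last inequality being $\alpha!\geq(\alpha/e)^\alpha$, pulls out the polynomial part; the remaining geometric sum factorizes over coordinates, $\sum_{m\in\mathbb{Z}^\nu}e^{-b|m|}=(\sum_{k\in\mathbb{Z}}e^{-b|k|})^\nu\leq(4/b)^\nu$, using $|m|=\sum_i|m_i|$, $\frac{1+e^{-b}}{1-e^{-b}}\leq\frac{2}{1-e^{-b}}$, and $1-e^{-b}\geq b/2$ (valid since $b\leq\frac{5}{12}<1$). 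With these choices $a^{-\alpha}=(12\kappa^{-1})^\alpha$ and $4/b=\frac{48}{5}\kappa^{-1}\leq12\kappa^{-1}$, whence $\Sigma(\alpha)\leq(12\kappa^{-1})^{\alpha+\nu}\alpha!$, as required.

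The hard part is precisely this last step: obtaining a clean $\nu$-th power in the dimension with the stated constant $12$. The naive route of expanding $|m|^\alpha=(\sum_i|m_i|)^\alpha$ multinomially and estimating each one-dimensional sum by a generating-function bound loses a factor $\binom{\alpha+\nu-1}{\nu-1}2^\nu$ that cannot be absorbed into $3^{\alpha+\nu}$ once $\nu$ is large. The split-exponent argument avoids this, but it uses crucially that $|\cdot|$ is the $\ell^1$ norm on $\mathbb{Z}^\nu$, so that $\sum_m e^{-b|m|}$ separates into a product of one-dimensional geometric series; for a Euclidean or sup norm one would pick up extra $\nu$-dependent constants, so I would fix the $\ell^1$ convention at the outset.
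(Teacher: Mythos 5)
Your proposal is correct. The first half --- splitting $e^{-\kappa|m_j|}$ into two equal factors, using the triangle inequality $\sum_{j}|m_j|\geq|\cc\ca\cs(m)|=|n|$ on the constraint set to extract $e^{-\frac{\kappa}{2}|n|}$, and then discarding the constraint --- is exactly the paper's argument. Where you diverge is in the treatment of the resulting unconstrained sum: the paper simply invokes Lemma \ref{3} (imported from \cite{DG2017JAMS}) with $\kappa$ replaced by $\kappa/2$, which converts the constant $6\kappa^{-1}$ into $12\kappa^{-1}$, whereas you prove the factorized single-site bound $\sum_{m\in\mathbb Z^\nu}|m|^{\alpha}e^{-\frac{\kappa}{2}|m|}\leq(12\kappa^{-1})^{\alpha+\nu}\alpha!$ from scratch, via the split $a=\kappa/12$, $b=5\kappa/12$, the elementary maximum $t^{\alpha}e^{-at}\leq a^{-\alpha}\alpha!$ (cf.\ Lemma \ref{l2l} and the Stirling bound $\alpha!\geq(\alpha/e)^{\alpha}$ of Lemma \ref{stf}), and the one-dimensional geometric series. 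Your constants check out ($1-e^{-b}\geq b/2$ for $0<b\leq1$, and $4/b=\frac{48}{5}\kappa^{-1}\leq12\kappa^{-1}$), so the net effect is a self-contained and slightly more elementary replacement for the cited auxiliary lemma; your closing observation that the argument hinges on the $\ell^{1}$ convention $|m|=\sum_i|m_i|$ is consistent with the paper's usage $|n^{(k)}|=\sum_j|m_j|$ and is worth making explicit, as you do.
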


\begin{proof}
By Lemma \ref{3} and \eqref{as2} we have
  \begin{align*}
  &\sum_{\substack{m=(m_1,\cdots,m_r)\in(\mathbb Z^{\nu})^r\\\cc\ca\cs(m)=n}}\prod_{j=1}^{r}|m_j|^{\alpha_j}e^{-\kappa|m_j|}\\
  =~&\sum_{\substack{m=(m_1,\cdots,m_r)\in(\mathbb Z^{\nu})^r\\\cc\ca\cs(m)=n}}\prod_{j=1}^{r}|m_j|^{\alpha_j}e^{-\frac{\kappa}{2}|m_j|}\cdot \prod_{j=1}^{r}e^{-\frac{\kappa}{2}|m_j|}\\
  =~&\sum_{\substack{m=(m_1,\cdots,m_r)\in(\mathbb Z^{\nu})^r\\\cc\ca\cs(m)=n}}\prod_{j=1}^{r}|m_j|^{\alpha_j}e^{-\frac{\kappa}{2}|m_j|}\cdot e^{-\frac{\kappa}{2}\sum_{j=1}^{r}|(-1)^{j-1}m_j|}\\
    \leq&\sum_{\substack{m=(m_1,\cdots,m_r)\in(\mathbb Z^{\nu})^r\\\cc\ca\cs(m)=n}}\prod_{j=1}^{r}|m_j|^{\alpha_j}e^{-\frac{\kappa}{2}|m_j|}\cdot e^{-\frac{\kappa}{2}|\sum_{j=1}^{r}(-1)^{j-1}m_j|}\\
  =~&\sum_{\substack{m=(m_1,\cdots,m_r)\in(\mathbb Z^{\nu})^r\\\cc\ca\cs(m)=n}}\prod_{j=1}^{r}|m_j|^{\alpha_j}e^{-\frac{\kappa}{2}|m_j|}\cdot e^{-\frac{\kappa}{2}|\cc\ca\cs(m)=n|}\\
  \leq~&\sum_{\substack{m=(m_1,\cdots,m_r)\in(\mathbb Z^{\nu})^r}}\prod_{j=1}^r|m_j|^{\alpha_j}e^{-\frac{\kappa}{2}|m_j|}\cdot e^{-\frac{\kappa}{2}|n|}\\
  \leq~&e^{-\frac{\kappa}{2}|n|}(12\kappa^{-1})^{|\alpha|+\nu r}\prod_{j=1}^r\alpha_j!.
  \end{align*}
This completes the proof of Lemma \ref{4}.
\end{proof}

It follows from Lemma \ref{less}, \eqref{dimsi}, Lemma \ref{4}, \eqref{sil} and Lemma \ref{lemalpha} that
\begin{lemm}\label{lemmck}
For all $k\geq1$, we have
\begin{align}\label{dsf}
\cc_k(t,n)\ll e^{-\frac{\kappa}{2}|n|}\cdot B^{1/2}(12\kappa^{-1})^\nu\sum_{\gamma^{(k)}\in\Gamma^{(k)}}
\frac{\tT^{\ell(\gamma^{(k)})}}{\tD(\gamma^{(k)})}
\sum_{\substack{\alpha=(\alpha_j)_{1\leq j\leq2\sigma(\gamma^{(k)})}\\\in\mathfrak R^{(k,\gamma^{(k)})}}}\prod_{j=1}^{2\sigma(\gamma^{(k)})}\alpha_j!,
\end{align}
where $\tT=(12\kappa^{-1})^{2\nu+1}B|\omega|t$.
\end{lemm}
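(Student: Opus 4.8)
The plan is to obtain \eqref{dsf} by substituting Lemma \ref{4} into the coefficientwise bound \eqref{cksl} of Lemma \ref{less} and then carefully collecting the powers of $12\kappa^{-1}$. First I would freeze the outer data $\gamma^{(k)}\in\Gamma^{(k)}$ and $\alpha=(\alpha_j)\in\mathfrak R^{(k,\gamma^{(k)})}$ and isolate the inner lattice sum $\sum_{\,\cc\ca\cs(n^{(k)})=n}\prod_j|(n^{(k)})_j|^{\alpha_j}e^{-\kappa|n^{(k)}|}$ appearing on the right of \eqref{cksl}. Writing $n^{(k)}=(m_j)_{1\le j\le 2\sigma(\gamma^{(k)})}$ as in Lemma \ref{cscscs}, one has $|n^{(k)}|=\sum_j|m_j|$ and, by \eqref{as2}, $\cc\ca\cs(n^{(k)})=\sum_j(-1)^{j-1}m_j$, so this inner sum is exactly the left-hand side of Lemma \ref{4} with $r=2\sigma(\gamma^{(k)})$, the value of $r$ being supplied by the dimension count \eqref{dimsi}. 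Applying Lemma \ref{4} then bounds it by $e^{-\frac{\kappa}{2}|n|}(12\kappa^{-1})^{|\alpha|+2\nu\sigma(\gamma^{(k)})}\prod_j\alpha_j!$.

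Next I would rewrite the exponent of $12\kappa^{-1}$ purely in terms of $\ell(\gamma^{(k)})$. By Lemma \ref{lemalpha} every $\alpha\in\mathfrak R^{(k,\gamma^{(k)})}$ satisfies $|\alpha|=\ell(\gamma^{(k)})$ (and since $0<\kappa\le1$ gives $12\kappa^{-1}\ge12>1$, even an upper bound $|\alpha|\le\ell(\gamma^{(k)})$ would suffice here), while \eqref{sil} gives $\sigma(\gamma^{(k)})=\ell(\gamma^{(k)})+\tfrac12$, so that $2\nu\sigma(\gamma^{(k)})=2\nu\ell(\gamma^{(k)})+\nu$. Hence the inner sum is at most $e^{-\frac{\kappa}{2}|n|}(12\kappa^{-1})^{(2\nu+1)\ell(\gamma^{(k)})+\nu}\prod_j\alpha_j!$, in which the $n$-dependence and the combinatorial $\gamma^{(k)}$-dependence have been fully decoupled.

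Finally I would feed this estimate back into \eqref{cksl} and collect constants. Pulling the factors $e^{-\frac{\kappa}{2}|n|}$, $B^{1/2}$ and $(12\kappa^{-1})^\nu$ out of the double sum over $\gamma^{(k)}$ and $\alpha$, the remaining $\gamma^{(k)}$-dependent prefactor is $(B|\omega|t)^{\ell(\gamma^{(k)})}(12\kappa^{-1})^{(2\nu+1)\ell(\gamma^{(k)})}/\tD(\gamma^{(k)})$; recognizing $(12\kappa^{-1})^{2\nu+1}B|\omega|t=\tT$ collapses this to $\tT^{\ell(\gamma^{(k)})}/\tD(\gamma^{(k)})$, while the factorials $\prod_{j}\alpha_j!$ are carried through unchanged. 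This is precisely \eqref{dsf}.

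No step here is genuinely deep once the auxiliary lemmas are in place: the analytic content has already been discharged in Lemma \ref{4} (the convolution-with-exponential-weight estimate) and in Lemma \ref{lemalpha} (the identity $|\alpha|=\ell(\gamma^{(k)})$). The only point requiring real care — and the one I would treat as the main obstacle to getting the stated form exactly right — is the bookkeeping of the $12\kappa^{-1}$ exponents: one must check that the $\nu r$ term of Lemma \ref{4} produces $2\nu\sigma(\gamma^{(k)})$, and that via \eqref{sil} this splits cleanly into the $2\nu\ell(\gamma^{(k)})$ that merges with $(B|\omega|t)^{\ell}$ into $\tT^{\ell}$ and the single leftover $\nu$ that becomes the global prefactor $(12\kappa^{-1})^\nu$. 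I would sanity-check this arithmetic on the base cases $\gamma^{(k)}=0$ (where $\ell=0$, $|\alpha|=0$) and $\gamma^{(1)}=1$ (where $\ell=1$, $|\alpha|=1$) before trusting the general collection of terms.
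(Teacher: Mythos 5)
Your proposal is correct and follows essentially the same route as the paper's own proof: both start from \eqref{cksl}, apply the constrained convolution bound of Lemma \ref{4} (the paper writes out its internal splitting of $e^{-\kappa|n^{(k)}|}$ and the inequality $|n^{(k)}|\geq|\cc\ca\cs(n^{(k)})|$ explicitly before invoking Lemma \ref{3}, which is exactly the content of Lemma \ref{4}), and then use Lemma \ref{lemalpha} together with \eqref{sil} to absorb $(12\kappa^{-1})^{(2\nu+1)\ell(\gamma^{(k)})}$ into $\tT^{\ell(\gamma^{(k)})}$, leaving the global prefactor $(12\kappa^{-1})^{\nu}$. Your exponent bookkeeping matches the paper's.
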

{
\begin{proof}
Set $n^{(k)}=(m_j)_{1\leq j\leq2\sigma(\gamma^{(k)})}$, where $m_j\in\bZ^\nu$ for all $j=1,\cdots,2\sigma(\gamma^{(k)})$. Then
\begin{align*}
|n^{(k)}|&=\sum_{j=1}^{2\sigma(\gamma^{(k)})}|m_j|\\
&=\sum_{j=1}^{2\sigma(\gamma^{(k)})}|(-1)^{j-1}m_j|\\
&\geq|\sum_{j=1}^{2\sigma(\gamma^{(k)})}(-1)^{j-1}m_j|\\
&=|\cc\ca\cs(n^{(k)})|.
\end{align*}
Dividing $e^{-\kappa|n^{(k)}|}$ in \eqref{cksl} into two equal parts $e^{-\kappa/2|n^{(k)}|}$,  with the above inequality, Lemma \ref{less}, Lemma \ref{3}, Lemma \ref{lemalpha}, we have
\begin{align*}
&\cc_k(t,n)\\
\ll& e^{-\frac{\kappa}{2}|n|}\cdot B^{1/2}\sum_{\gamma^{(k)}\in\Gamma^{(k)}}\frac{(B|\omega|t)^{\ell(\gamma^{(k)})}}{\tD(\gamma^{(k)})}\sum_{\substack{\alpha=(\alpha_j)_{1\leq j\leq2\sigma(\gamma^{(k)})}\\\in\mathfrak R^{(k,\gamma^{(k)})}}}(12\kappa^{-1})^{|\alpha|+2\sigma(\gamma^{(k)})\nu}\prod_{j}\alpha_j!
\\
\ll& e^{-\frac{\kappa}{2}|n|}\cdot B^{1/2}(12\kappa^{-1})^\nu\sum_{\gamma^{(k)}\in\Gamma^{(k)}}\frac{(B|\omega|(12\kappa^{-1})^{2\nu+1}t)^{\ell(\gamma^{(k)})}}{\tD(\gamma^{(k)})}
\sum_{\substack{\alpha=(\alpha_j)_{1\leq j\leq2\sigma(\gamma^{(k)})}\\\in\mathfrak R^{(k,\gamma^{(k)})}}}\prod_{j}\alpha_j!.
\end{align*}
This finishes the proof of Lemma \ref{lemmck}.
\end{proof}
}

For the estimate of the right-hand side of \eqref{dsf}, except for the constant and the exponential factor, we first have the following result:

\begin{lemm}\label{lemdd}
If $0<\tT\leq\frac{4}{81}$, then
\begin{align}\label{ttt}
M_k\triangleq\sum_{\gamma^{(k)}\in\Gamma^{(k)}}
\frac{\tT^{\ell(\gamma^{(k)})}}{\tD(\gamma^{(k)})}
\sum_{\alpha=(\alpha_j)_{1\leq j\leq2\sigma(\gamma^{(k)})}\in\mathfrak R^{(k,\gamma^{(k)})}}\prod_{j=1}^{2\sigma(\gamma^{(k)})}\alpha_j!\leq\frac{3}{2},\quad\forall k\geq1.
\end{align}
\end{lemm}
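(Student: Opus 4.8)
The plan is to collapse the multi-indexed quantity $M_k$ into a scalar recursion and then close the estimate by a monotone induction. Concretely, I would show that the numbers $M_k$ obey
$$M_1 = 1 + 3\tT, \qquad M_k = 1 + 3\tT\, M_{k-1}^3 \quad (k \geq 2),$$
after which the bound $M_k \leq 3/2$ drops out from the observation that $x = 3/2$ is a fixed point of $x \mapsto 1 + 3\tT x^3$ exactly at the threshold $\tT = 4/81$, so that for $0 < \tT \leq 4/81$ the interval $[1,3/2]$ is mapped into itself.

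To derive the recursion I would introduce the per-branch weight
$$w(\gamma^{(k)}) := \frac{\tT^{\ell(\gamma^{(k)})}}{\tD(\gamma^{(k)})}\, S(\gamma^{(k)}), \qquad S(\gamma^{(k)}) := \sum_{\alpha \in \cR^{(k,\gamma^{(k)})}} \prod_{j} \alpha_j!, \qquad M_k = \sum_{\gamma^{(k)} \in \Gamma^{(k)}} w(\gamma^{(k)}).$$
The heart of the argument — and the step I expect to be the main obstacle — is evaluating $S(\gamma^{(k)})$ through the recursive structure of $\cR$. First I would record, by induction on $k$, that every $\alpha \in \cR^{(k,\gamma^{(k)})}$ has total degree $|\alpha| = \ell(\gamma^{(k)})$: the base cases contribute degrees $0$ and $1$, and each Minkowski sum with $\ce^{(k,\gamma^{(k)})}$ adds exactly $1$. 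Then, for a recursive branch $\gamma^{(k)} = (\gamma_j^{(k-1)})_{1\le j\le 3}$, every $\alpha \in \cR^{(k,\gamma^{(k)})}$ is $\beta + e_i$ with $\beta$ the concatenation of $\beta^{(j)} \in \cR^{(k-1,\gamma_j^{(k-1)})}$ and $e_i$ a unit vector, so $\prod_j (\beta + e_i)_j! = (\beta_i + 1)\prod_j \beta_j!$. Summing over the $2\sigma(\gamma^{(k)})$ choices of $i$ produces the factor $|\beta| + 2\sigma(\gamma^{(k)})$; inserting $|\beta| = \ell(\gamma^{(k)}) - 1$ together with $2\sigma = 2\ell + 1$ from \eqref{sil}, this factor collapses to exactly $3\ell(\gamma^{(k)})$. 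Factoring the Cartesian product over the three subtrees then gives the clean law $S(\gamma^{(k)}) = 3\ell(\gamma^{(k)}) \prod_{j=1}^{3} S(\gamma_j^{(k-1)})$.

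Feeding this into $w$ and using $\ell(\gamma^{(k)}) = 1 + \sum_j \ell(\gamma_j^{(k-1)})$ (so that $\tT^{\ell(\gamma^{(k)})} = \tT \prod_j \tT^{\ell(\gamma_j^{(k-1)})}$) together with $\tD(\gamma^{(k)}) = \ell(\gamma^{(k)}) \prod_j \tD(\gamma_j^{(k-1)})$, the spurious factor $\ell(\gamma^{(k)})$ cancels against the one in $\tD$, leaving the multiplicative identity
$$w(\gamma^{(k)}) = 3\tT \prod_{j=1}^{3} w(\gamma_j^{(k-1)}).$$
Combined with $w(0) = 1$ and $w(1) = 3\tT$, summing over $\Gamma^{(k)} = \{0\} \cup (\Gamma^{(k-1)})^3$ yields $M_k = 1 + 3\tT\,M_{k-1}^3$, as claimed. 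Finally I would prove $M_k \leq 3/2$ by induction on $k$: the base case $M_1 = 1 + 3\tT \leq 1 + \tfrac{4}{27} < \tfrac32$ holds, and if $M_{k-1} \leq 3/2$ then by monotonicity of $x \mapsto 1 + 3\tT x^3$ we get $M_k \leq 1 + 3\tT(3/2)^3 = 1 + \tfrac{81}{8}\tT \leq \tfrac32$, where the last step is precisely the hypothesis $\tT \leq 4/81$. This closes the induction and completes the proof.
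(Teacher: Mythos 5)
Your proposal is correct and follows essentially the same route as the paper: both derive the product identity $\sum_{\alpha}\prod_j\alpha_j! = 3\ell(\gamma^{(k)})\prod_{j=1}^{3}\left(\sum_{\alpha^{(j)}}\prod\alpha^{(j)}_{i}!\right)$ by splitting over the position of the unit vector $\beta$, cancel the resulting $\ell(\gamma^{(k)})$ against the one in $\tD$, and close the induction at the fixed point $1+3\tT(3/2)^3=3/2$ when $\tT=4/81$. Your write-up in fact states the underlying scalar recursion $M_k=1+3\tT M_{k-1}^3$ more cleanly than the paper, which leaves the sum over $\gamma^{(k)}\in(\Gamma^{(k-1)})^3$ implicit in its final step.
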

\begin{proof}
Here we give a more understandable and accessible way to obtain the desired result; compare \cite[Lemma 2.16]{MR4751185}.

For the sake of convenience, set
\[
P_k(\gamma^{(k)})=\sum_{\alpha=(\alpha_j)_{1\leq j\leq2\sigma(\gamma^{(k)})}\in\mathfrak R^{(k,\gamma^{(k)})}}\prod_{j=1}^{2\sigma(\gamma^{(k)})}\alpha_j!
\]
and
\[
M_k(\gamma^{(k)})=\frac{\tT^{\ell(\gamma^{(k)})}}{\tD(\gamma^{(k)})}
P_k(\gamma^{(k)}).
\]

It is clear that $M_k(0)\leq1$ for all $k\geq1$, and $M_1(1)\leq3\tT$. Hence $M_1\leq1+3\tT\leq\frac{3}{2}$ if $0<\tT\leq\frac{4}{81}$. This shows that \eqref{ttt} holds for $k=1$.

Let $k\geq2$. Assume that \eqref{ttt} is true for all $1<k^\prime<k$.

For $k$, set $\gamma^{(k)}=(\gamma_j^{(k-1)})_{1\leq j\leq3}\in(\Gamma^{(k-1)})^3$. It follows from the definitions of $\ell$ and $\tD$ that
\begin{align}\label{mkk}
M_k&=M_k(0)+M_k(\gamma^{(k)})\nonumber\\
&=1+\frac{\tT}{\ell(\gamma^{(k)})}\prod_{j=1}^3\frac{\tT^{\ell(\gamma_j^{(k-1)})}}{\tD(\gamma_j^{(k-1)})}
P_k(\gamma^{(k)}).
\end{align}
In this case, set $\alpha=(\alpha_j)_{1\leq j\leq2\sigma(\gamma^{(k)})}=(\alpha^{(1)},\alpha^{(2)},\alpha^{(3)})+\beta$, where
\begin{align*}
(\xi_1,\cdots,\xi_{2\sigma(\gamma_1^{(k-1)})})&=\alpha^{(1)}\in\cR^{(k-1,\gamma_1^{(k-1)})};\\
(\eta_1,\cdots,\eta_{2\sigma(\gamma_2^{(k-1)})})&=\alpha^{(2)}\in\cR^{(k-1,\gamma_2^{(k-1)})};\\
(\tau_1,\cdots,\tau_{2\sigma(\gamma_3^{(k-1)})})&=\alpha^{(3)}\in\cR^{(k-1,\gamma_3^{(k-1)})},
\end{align*}
and
\begin{align*}
\ce^{(k,\gamma^{(k)})}\ni\beta&=(\beta_j)_{1\leq j\leq2\sigma(\gamma^{(k)})}\\
&=(\xi^0_1,\cdots,\xi^0_{2\sigma(\gamma_1^{(k-1)})};\eta^0_1,\cdots,\eta^0_{2\sigma_2^{(k-1)}};
\tau^0_1,\cdots,\tau^0_{2\sigma(\gamma_3^{(k-1)})}).
\end{align*}
Hence
\begin{align*}
\alpha=(&\xi_1+\xi^0_1,\cdots,\xi_{2\sigma(\gamma_1^{(k-1)})}+\xi^0_{2\sigma(\gamma_1^{(k-1)})};\\
&\eta_1+\eta^0_1,\cdots,
\eta_{2\sigma(\gamma_2^{(k-1)})}+\eta^0_{2\sigma(\gamma_2^{(k-1)})};\\
&\tau_1+\tau^0_1,\cdots,\tau_{2\sigma(\gamma_3^{(k-1)})}+\tau^0_{2\sigma(\gamma_3^{(k-1)})}).
\end{align*}
Since $|\beta|=1$, there is exactly one component that is $1$ and the rest of the components are $0$. So we can split $P_k(\gamma^{(k)})$ into sums over $j_0\in\{1,\cdots,2\sigma(\gamma^{(k)})\}$ of
$$\{\beta\in\ce^{(k,\gamma^{(k)})}: \text{the}~j_0\text{-th component}~\beta_{j_0} \text{of}~\beta~\text{is}~1\}$$ as follows (we will use the symbol $e_{j_0}=(0,\cdots,0,1,0,\cdots,0)$, that is, the $j_0$-th component is $1$ and the rest is zero)
\begin{align*}
P_k(\gamma^{(k)})=&\sum_{j_0=1}^{2\sigma(\gamma^{(k)})}\sum_{\beta_{j_0}=1}
\sum_{\substack{\alpha=(\alpha_j)_{1\leq j\leq2\sigma(\gamma^{(k)})}=(\alpha^{(1)},\alpha^{(2)},\alpha^{(3)})+\beta_{j_0}e_{j_0}\\(\alpha^{(1)},\alpha^{(2)},\alpha^{(3)})
\in\prod_{j=1}^3\cR^{(k-1,
\gamma_j^{(k-1)})}}}\alpha_j!\\
\triangleq& P^{(1)}_k(\gamma^{(k)})+P^{(2)}_k(\gamma^{(k)})+P^{(3)}_k(\gamma^{(k)}),
\end{align*}
where
\begin{align*}
P^{(1)}_k(\gamma^{(k)})&=\sum_{j_1=1}^{2\sigma(\gamma_1^{(k-1)})}\sum_{\substack{\alpha^{(1)}=(\xi_j)_{1\leq j\leq2\sigma(\gamma_{1}^{(k-1)})}\\\in\cR^{(k-1,\gamma_{1}^{(k-1)})}\\\alpha^{(2)}=(\eta_j)_{1\leq j\leq2\sigma(\gamma_{2}^{(k-1)})}\\\in\cR^{(k-1,\gamma_{2}^{(k-1)})}\\\alpha^{(3)}=(\tau_j)_{1\leq j\leq2\sigma(\gamma_{3}^{(k-1)})}\\\in\cR^{(k-1,\gamma_{3}^{(k-1)})}}}\prod_{j=1,j\neq j_1}^{2\sigma(\gamma_1^{(k-1)})}\xi_j!\cdot(\xi_{j_1}+1)!\prod_{j=1}^{2\sigma(\gamma_2^{(k-1)})}
\eta_j!\prod_{j=1}^{2\sigma(\gamma_1^{(k-1)})}\tau_j!;\\
P^{(2)}_k(\gamma^{(k)})&=\sum_{j_2=1}^{2\sigma(\gamma_2^{(k-1)})}\sum_{\substack{\alpha^{(1)}=(\xi_j)_{1\leq j\leq2\sigma(\gamma_{1}^{(k-1)})}\\\in\cR^{(k-1,\gamma_{1}^{(k-1)})}\\\alpha^{(2)}=(\eta_j)_{1\leq j\leq2\sigma(\gamma_{2}^{(k-1)})}\\\in\cR^{(k-1,\gamma_{2}^{(k-1)})}\\\alpha^{(3)}=(\tau_j)_{1\leq j\leq2\sigma(\gamma_{3}^{(k-1)})}\\\in\cR^{(k-1,\gamma_{3}^{(k-1)})}}}\prod_{j=1}^{2\sigma(\gamma_1^{(k-1)})}\xi_j!
\prod_{j=1,j\neq j_2}^{2\sigma(\gamma_2^{(k-1)})}
\eta_j!\cdot(\eta_{j_2}+1)!\prod_{j=1}^{2\sigma(\gamma_1^{(k-1)})}\tau_j;\\
P^{(3)}_k(\gamma^{(k)})&=\sum_{j_3=1}^{2\sigma(\gamma_3^{(k-1)})}\sum_{\substack{\alpha^{(1)}=(\xi_j)_{1\leq j\leq2\sigma(\gamma_{1}^{(k-1)})}\\\in\cR^{(k-1,\gamma_{1}^{(k-1)})}\\\alpha^{(2)}=(\eta_j)_{1\leq j\leq2\sigma(\gamma_{2}^{(k-1)})}\\\in\cR^{(k-1,\gamma_{2}^{(k-1)})}\\\alpha^{(3)}=(\tau_j)_{1\leq j\leq2\sigma(\gamma_{3}^{(k-1)})}\\\in\cR^{(k-1,\gamma_{3}^{(k-1)})}}}\prod_{j=1}^{2\sigma(\gamma_1^{(k-1)})}\xi_j!
\prod_{j=1}^{2\sigma(\gamma_2^{(k-1)})}
\eta_j!\prod_{j=1,j\neq j_3}^{2\sigma(\gamma_1^{(k-1)})}\tau_j!
\cdot(\tau_{j_3}+1)!.
\end{align*}
Take $P^{(1)}_k(\gamma_1^{(k-1)})$ as an example to make a further analysis as follows.
\begin{align*}
P^{(1)}_k(\gamma^{(k)})=&\sum_{j_1=1}^{2\sigma(\gamma_1^{(k-1)})}\sum_{\substack{\alpha^{(1)}=(\xi_j)_{1\leq j\leq2\sigma(\gamma_{1}^{(k-1)})}\\\in\cR^{(k-1,\gamma_{1}^{(k-1)})}\\\alpha^{(2)}=(\eta_j)_{1\leq j\leq2\sigma(\gamma_{2}^{(k-1)})}\\\in\cR^{(k-1,\gamma_{2}^{(k-1)})}\\\alpha^{(3)}=(\tau_j)_{1\leq j\leq2\sigma(\gamma_{3}^{(k-1)})}\\\in\cR^{(k-1,\gamma_{3}^{(k-1)})}}}(\xi_{j_1}+1)\prod_{j=1}^{2\sigma(\gamma_1^{(k-1)})}\xi_j!\prod_{j=1}^{2\sigma(\gamma_2^{(k-1)})}
\eta_j!\prod_{j=1}^{2\sigma(\gamma_1^{(k-1)})}\tau_j!\\
=&\sum_{\substack{\alpha^{(1)}=(\xi_j)_{1\leq j\leq2\sigma(\gamma_{1}^{(k-1)})}\\\in\cR^{(k-1,\gamma_{1}^{(k-1)})}\\\alpha^{(2)}=(\eta_j)_{1\leq j\leq2\sigma(\gamma_{2}^{(k-1)})}\\\in\cR^{(k-1,\gamma_{2}^{(k-1)})}\\\alpha^{(3)}=(\tau_j)_{1\leq j\leq2\sigma(\gamma_{3}^{(k-1)})}\\\in\cR^{(k-1,\gamma_{3}^{(k-1)})}}}\sum_{j_1=1}^{2\sigma(\gamma_1^{(k-1)})}(\xi_{j_1}+1)\prod_{j=1}^{2\sigma(\gamma_1^{(k-1)})}\xi_j!\prod_{j=1}^{2\sigma(\gamma_2^{(k-1)})}
\eta_j!\prod_{j=1}^{2\sigma(\gamma_1^{(k-1)})}\tau_j!\\
=&\sum_{\substack{\alpha^{(1)}=(\xi_j)_{1\leq j\leq2\sigma(\gamma_{1}^{(k-1)})}\\\in\cR^{(k-1,\gamma_{1}^{(k-1)})}\\\alpha^{(2)}=(\eta_j)_{1\leq j\leq2\sigma(\gamma_{2}^{(k-1)})}\\\in\cR^{(k-1,\gamma_{2}^{(k-1)})}\\\alpha^{(3)}=(\tau_j)_{1\leq j\leq2\sigma(\gamma_{3}^{(k-1)})}\\\in\cR^{(k-1,\gamma_{3}^{(k-1)})}}}\left(\ell(\gamma_1^{(k-1)}+2\sigma(\gamma_1^{(k-1)}))\right)\prod_{j=1}^{2\sigma(\gamma_1^{(k-1)})}\xi_j!\prod_{j=1}^{2\sigma(\gamma_2^{(k-1)})}
\eta_j!\prod_{j=1}^{2\sigma(\gamma_1^{(k-1)})}\tau_j!\\
=&(3\ell(\gamma_1^{(k-1)})+1)\sum_{\substack{\alpha^{(1)}=(\xi_j)_{1\leq j\leq2\sigma(\gamma_{1}^{(k-1)})}\\\in\cR^{(k-1,\gamma_{1}^{(k-1)})}\\\alpha^{(2)}=(\eta_j)_{1\leq j\leq2\sigma(\gamma_{2}^{(k-1)})}\\\in\cR^{(k-1,\gamma_{2}^{(k-1)})}\\\alpha^{(3)}=(\tau_j)_{1\leq j\leq2\sigma(\gamma_{3}^{(k-1)})}\\\in\cR^{(k-1,\gamma_{3}^{(k-1)})}}}\prod_{j=1}^{2\sigma(\gamma_1^{(k-1)})}\xi_j!\prod_{j=1}^{2\sigma(\gamma_2^{(k-1)})}
\eta_j!\prod_{j=1}^{2\sigma(\gamma_1^{(k-1)})}\tau_j!\\
=&(3\ell(\gamma_1^{(k-1)})+1)\sum_{\substack{\alpha^{(1)}=(\xi_j)_{1\leq j\leq2\sigma(\gamma_{1}^{(k-1)})}\in\cR^{(k-1,\gamma_{1}^{(k-1)})}}}\prod_{j=1}^{2\sigma(\gamma_1^{(k-1)})}\xi_j!\\
&\sum_{\substack{\alpha^{(2)}=(\eta_j)_{1\leq j\leq2\sigma(\gamma_{2}^{(k-1)})}\in\cR^{(k-1,\gamma_{2}^{(k-1)})}}}\prod_{j=1}^{2\sigma(\gamma_2^{(k-1)})}
\eta_j!\\
&\sum_{\substack{\alpha^{(3)}=(\tau_j)_{1\leq j\leq2\sigma(\gamma_{3}^{(k-1)})}\in\cR^{(k-1,\gamma_{3}^{(k-1)})}}}\prod_{j=1}^{2\sigma(\gamma_3^{(k-1)})}\tau_j!\\
=&(3\ell(\gamma_1^{(k-1)})+1)\prod_{j=1}^3P_{k-1}(\gamma_j^{(k-1)}).
\end{align*}
Similarly,
\begin{align*}
P^{(2)}_k(\gamma^{(k)})=&(3\ell(\gamma_2^{(k-1)})+1)\prod_{j=1}^3P_{k-1}(\gamma_j^{(k-1)});\\
P^{(3)}_k(\gamma^{(k)})=&(3\ell(\gamma_3^{(k-1)})+1)\prod_{j=1}^3P_{k-1}(\gamma_j^{(k-1)}).
\end{align*}
Thus
\begin{align}
P_k(\gamma^{(k)})=&\sum_{j=1}^3P^{(j)}_{k}(\gamma^{(k)})\nonumber\\
=&\sum_{j=1}^3(3\ell(\gamma_j^{(k-1)})+1)\prod_{j=1}^3P_{k-1}(\gamma_j^{(k-1)})\nonumber\\
=&3\left(1+\sum_{j=1}^3\ell(\gamma_j^{(k-1)})\right)\prod_{j=1}^3P_{k-1}(\gamma_j^{(k-1)})\nonumber\\
=&3\ell(\gamma^{(k)})\prod_{j=1}^3P_{k-1}(\gamma_j^{(k-1)}).\label{ppp}
\end{align}
Plugging \eqref{ppp} into \eqref{mkk} yields that
\begin{align*}
M_k=&M_k(0)+M_k(\gamma^{(k)})\\
=&1+\frac{\tT}{\ell(\gamma^{(k)})}\cdot3\ell(\gamma^{(k)})\prod_{j=1}^3P_{k-1}(\gamma_j^{(k-1)})\\
\leq&1+3\tT\cdot(3/2)^3\\
\leq&1+3\cdot\frac{4}{81}\cdot\frac{27}{8}\\
=&\frac{3}{2}.
\end{align*}
This proves that \eqref{ttt} holds for $k$, and hence it holds for all $k\geq1$ by induction. This completes the proof of Lemma \ref{lemdd}.
\end{proof}

By Lemma \ref{lemdd}, we know that
\begin{align}\label{sskk}
\sum_{\gamma^{(k)}\in\Gamma^{(k)}}
\frac{\tT^{\ell(\gamma^{(k)})}}{\tD(\gamma^{(k)})}
\sum_{\alpha=(\alpha_j)_{1\leq j\leq2\sigma(\gamma^{(k)})}\in\mathfrak R^{(k,\gamma^{(k)})}}\prod_{j=1}^{2\sigma(\gamma^{(k)})}\alpha_j!\leq\frac{3}{2}
\end{align}
provided that
\begin{align}\label{t2}
0<t\leq\frac{4\cdot\kappa^{2\nu+1}}{81\cdot12^{2\nu+1}B|\omega|}\triangleq t_2.
\end{align}

Plugging \eqref{sskk} into \eqref{dsf}, we can prove that $\cc_k(t,n)$ is $\kappa/2$-exponentially decaying uniformly in $(t,n)\in[0,t_2]\times\bZ^\nu$.
\begin{lemm}
If $0<t\leq t_2$, then
\begin{align}\label{ckc}
\cc_k(t,n)\ll Ce^{-\frac{\kappa}{2}|n|},\quad\forall n\in\bZ^\nu,~\forall k\geq1,
\end{align}
where $C=\frac{3}{2}B^{1/2}(12\kappa^{-1})^\nu$.
\end{lemm}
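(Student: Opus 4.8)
The plan is to feed the output of Lemma~\ref{lemmck} into the factorial bound established in Lemma~\ref{lemdd}, so that the statement becomes a one-line corollary once the time condition is translated into a condition on $\tT$. The key observation I would record first is that the hypothesis $0<t\le t_2$ is \emph{exactly} equivalent to the smallness requirement $0<\tT\le\frac{4}{81}$ appearing in Lemma~\ref{lemdd}. Indeed, recalling $\tT=(12\kappa^{-1})^{2\nu+1}B|\omega|t$ and the definition \eqref{t2} of $t_2$, substituting $t=t_2$ gives
\begin{align*}
\tT=(12\kappa^{-1})^{2\nu+1}B|\omega|\cdot\frac{4\kappa^{2\nu+1}}{81\cdot12^{2\nu+1}B|\omega|}=\frac{4}{81},
\end{align*}
so that $t\mapsto\tT$ is the increasing linear map carrying $(0,t_2]$ onto $(0,\tfrac{4}{81}]$. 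Hence for every $t$ with $0<t\le t_2$ the hypothesis of Lemma~\ref{lemdd} is met.

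With this in hand, I would invoke \eqref{sskk} (equivalently the conclusion \eqref{ttt} of Lemma~\ref{lemdd}) to bound
\begin{align*}
\sum_{\gamma^{(k)}\in\Gamma^{(k)}}\frac{\tT^{\ell(\gamma^{(k)})}}{\tD(\gamma^{(k)})}\sum_{\alpha\in\mathfrak R^{(k,\gamma^{(k)})}}\prod_{j=1}^{2\sigma(\gamma^{(k)})}\alpha_j!\le\frac{3}{2}
\end{align*}
uniformly in $k\ge1$. Substituting this directly into the right-hand side of \eqref{dsf} from Lemma~\ref{lemmck}, the prefactor $B^{1/2}(12\kappa^{-1})^\nu$ survives while the $\gamma^{(k)}$-sum collapses to $\tfrac{3}{2}$, yielding
\begin{align*}
\cc_k(t,n)\ll e^{-\frac{\kappa}{2}|n|}\cdot B^{1/2}(12\kappa^{-1})^\nu\cdot\frac{3}{2}=Ce^{-\frac{\kappa}{2}|n|}
\end{align*}
with $C=\frac{3}{2}B^{1/2}(12\kappa^{-1})^\nu$, which is exactly the claim. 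The bound is uniform in both $n\in\bZ^\nu$ and $k\ge1$, as needed for the later passage to the limit.

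There is essentially no genuine analytic obstacle at this stage: all the difficulty has already been absorbed into Lemma~\ref{lemmck} (the reduction of the majorant of the Picard iterate to the factorial sum via the Feynman-diagram $\tC$--$\tI$--$\tF$ decomposition together with Lemma~\ref{4}) and into Lemma~\ref{lemdd} (the recursion $P_k(\gamma^{(k)})=3\ell(\gamma^{(k)})\prod_jP_{k-1}(\gamma_j^{(k-1)})$ that keeps $M_k\le\tfrac32$ under $\tT\le\tfrac{4}{81}$). The only point demanding care is bookkeeping: confirming that the constant $C$ and the worsened rate $\kappa/2$ are precisely those produced by the two input lemmas, and that the threshold $\tfrac{4}{81}$ fixed in Lemma~\ref{lemdd} is what pins down $t_2$ in \eqref{t2}. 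Once the equivalence $t\le t_2\Leftrightarrow\tT\le\tfrac4{81}$ is in place, the lemma follows immediately.
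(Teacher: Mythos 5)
Your proposal is correct and takes essentially the same route as the paper, which proves this lemma in one line by combining Lemma \ref{lemmck} with Lemma \ref{lemdd}. Your explicit verification that $0<t\le t_2$ is equivalent to $0<\tT\le\frac{4}{81}$ is the only bookkeeping step the paper leaves implicit, and you carry it out correctly.
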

{
\begin{proof}
It follows from Lemmas \ref{lemmck}-\ref{lemdd} that \eqref{ckc} holds true for all $k\geq1$.
\end{proof}
}

\subsubsection{The Picard Sequence is a Cauchy Sequence}

Here we use \eqref{gsj}, \eqref{ckc} and Lemma \ref{lemmck}
and induction to prove that the Picard sequence is a Cauchy sequence; see Lemma \ref{lemmc}.

\begin{lemm}\label{lemmc}
For all $k\geq1$, we have
\begin{align}
&\cc_{k}(t,n)-\cc_{k-1}(t,n)\nonumber\\
\ll&\frac{3^{k-1}C^{2k+1}(|\omega|t)^k}{k!}\sum_{\substack{m^{(k)}=(m_j)_{1\leq j\leq2k+1}\in(\bZ^\nu)^{2k+1}\\\cc\ca\cs(m^{(k)})=n}}\sum_{\substack{\alpha=(\alpha_j)_{1\leq j\leq2k+1}\in\tG^{(k)}}}\prod_{j=1}^{2k+1}|m_j|^{\alpha_j}e^{-\frac{\kappa}{2}|m_j|}\label{1c}\\
\ll&e^{-\frac{\kappa}{4}|n|}C^\prime\left(12eC^2(24\kappa^{-1})^{2\nu+1}|\omega|t\right)^k,\label{2cc}
\end{align}
where $C^\prime=3^{-1}C(24\kappa^{-1})^{\nu}e^{\frac{1}{2}}C$, and
\begin{align*}
\tG^{(k)}&:=
\begin{cases}
\{(1,0,0),(0,1,0),(0,0,1)\}, &k=1;\\
\ce^{(k)}+\tG^{(k-1)}\times\{0\in\bZ\}\times\{0\in\bZ\}, &k\geq2;
\end{cases}\\
\ce^{(k)}&:=\{\alpha\in\bN^{2k+1}: |\alpha|=1\},\quad k\geq2.
\end{align*}

Furthermore, for all $q\in\bN$, we have
\begin{align}\label{cauchy}
\cc_{k+q}(t,n)-\cc_{k}(t,n)\ll e^{-\frac{\kappa}{4}|n|}C^{\prime\prime}\left(12eC^2(24\kappa^{-1})^{2\nu+1}|\omega|t\right)^{k},
\end{align}
where $C^{\prime\prime}:=\frac{C^\prime}{1-12eC^2(24\kappa^{-1})^{2\nu+1}|\omega|t}$.
This shows that the Picard sequence $\{\cc_k(t,n)\}$ is a Cauchy sequence.
\end{lemm}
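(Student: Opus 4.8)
The plan is to establish the pointwise bound \eqref{1c} by induction on $k$, then pass to the clean geometric bound \eqref{2cc} by a single application of Lemma \ref{4}, and finally obtain \eqref{cauchy} by summing a geometric series. For the base case $k=1$, the difference $\cc_1-\cc_0$ is the single Duhamel term in \eqref{gsj} with $\cc_0(s,n_j)=e^{-{\rm i}\langle n_j\rangle^2 s}c(n_j)$. Taking absolute values (the oscillatory exponentials have modulus one and the time integral contributes a factor $t$), bounding the prefactor by $|\langle n\rangle|\le|\omega||\cc\ca\cs(m^{(1)})|\le|\omega|\sum_j|m_j|$, estimating each $|c(n_j)|\le B^{1/2}e^{-\kappa|m_j|}\le Ce^{-\kappa|m_j|}$, and splitting one copy $e^{-\kappa|m_j|}=e^{-\frac\kappa2|m_j|}e^{-\frac\kappa2|m_j|}$ while discarding the surplus reproduces exactly the $k=1$ instance of \eqref{1c}, with $\tG^{(1)}=\{(1,0,0),(0,1,0),(0,0,1)\}$ recording the three choices of which component the growth factor $\sum_j|m_j|$ lands on.

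For the inductive step $k\ge2$ I would insert into \eqref{gsj} the telescoping identity $\prod_{j=1}^3 a_j-\prod_{j=1}^3 b_j=(a_1-b_1)a_2a_3+b_1(a_2-b_2)a_3+b_1b_2(a_3-b_3)$ with $a_j=\{\cc_{k-1}(s,n_j)\}^{\ast^{[j-1]}}$ and $b_j=\{\cc_{k-2}(s,n_j)\}^{\ast^{[j-1]}}$, the conjugation labels surviving because $\ast^{[\cdot]}$ is additive. In each of the three resulting terms the lone difference factor is controlled by the induction hypothesis \eqref{1c} at level $k-1$ (contributing $2(k-1)+1=2k-1$ lattice variables, the constant $C^{2k-1}$, the weight $(|\omega|s)^{k-1}/(k-1)!$, and the index set $\tG^{(k-1)}$), while the two surviving factors are controlled by the uniform decay \eqref{ckc} (the case of $\cc_0$ being covered by \eqref{ed} itself), each adding one lattice variable with $\alpha$-exponent $0$ and one constant $C$, for a total constant $C^{2k+1}$. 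Estimating $|\langle n\rangle|\le|\omega|\sum_j|m_j|$ over all $2k+1$ variables injects one more unit into the multi-index, which is precisely the summand $\ce^{(k)}$, so the three blocks assemble into $\tG^{(k)}=\ce^{(k)}+\tG^{(k-1)}\times\{0\}\times\{0\}$; the time integration $\int_0^t s^{k-1}/(k-1)!\,{\rm d}s=t^k/k!$ promotes the weight to $(|\omega|t)^k/k!$, and summing the three telescoping terms supplies the factor $3$ turning $3^{k-2}$ into $3^{k-1}$, giving \eqref{1c}.

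Passing to \eqref{2cc} is then a direct application of Lemma \ref{4} with $\kappa$ replaced by $\kappa/2$: the inner sum over $m^{(k)}\in(\bZ^\nu)^{2k+1}$ with $\cc\ca\cs(m^{(k)})=n$ is bounded by $e^{-\frac\kappa4|n|}(24\kappa^{-1})^{|\alpha|+\nu(2k+1)}\prod_j\alpha_j!$, and since every $\alpha\in\tG^{(k)}$ satisfies $|\alpha|=k$ the only remaining quantity is $S_k:=\sum_{\alpha\in\tG^{(k)}}\prod_j\alpha_j!$. The recursion defining $\tG^{(k)}$ yields a recursion of the form $S_k=3kS_{k-1}$, hence $S_k=3^k k!$ (this is essentially the content of Lemma \ref{lemalpha}), whose factor $k!$ cancels the $1/k!$ in \eqref{1c}; collecting the powers of $3$, $C$, $24\kappa^{-1}$ and $|\omega|t$ and absorbing them generously into the constant $C'$ and the base $12eC^2(24\kappa^{-1})^{2\nu+1}|\omega|t$ produces \eqref{2cc}. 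Finally \eqref{cauchy} is immediate: writing $\cc_{k+q}-\cc_k=\sum_{i=1}^q(\cc_{k+i}-\cc_{k+i-1})$ and inserting \eqref{2cc} for each summand leaves a geometric series in the base, which sums to the stated $C''$ as soon as $12eC^2(24\kappa^{-1})^{2\nu+1}|\omega|t<1$; this smallness requirement is exactly what fixes the time threshold $t_3$.

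I expect the main obstacle to be the inductive step for \eqref{1c}: one must track simultaneously the growth of the lattice dimension by two per level, the inheritance of the constraint $\cc\ca\cs(m^{(k)})=n$ from $n=n_1-n_2+n_3$ together with the sub-block alternating sums, and the injection of the $\ce^{(k)}$ increment by the derivative-loss factor $|\langle n\rangle|$, all while the telescoping preserves the $\ast^{[\cdot]}$ conjugation pattern. Identifying the resulting multi-index set with $\tG^{(k)}=\ce^{(k)}+\tG^{(k-1)}\times\{0\}\times\{0\}$ is the delicate bookkeeping; once \eqref{1c} is secured, the passages to \eqref{2cc} and \eqref{cauchy} are essentially mechanical applications of Lemma \ref{4} and a geometric series.
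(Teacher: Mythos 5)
Your treatment of \eqref{1c} (base case from the single Duhamel term; inductive step via the three-term telescoping of the cubic product, with the induction hypothesis on the lone difference factor, the uniform decay \eqref{ckc} on the surviving factors, and the derivative-loss factor $|\langle n\rangle|\le|\omega|\sum_j|m_j|$ injecting the $\ce^{(k)}$ increment into the multi-index) and of \eqref{cauchy} (telescoping plus geometric series, with the smallness condition defining $t_3$) coincides with the paper's argument; your version of the inductive step with $a_j=\{\cc_{k-1}\}^{\ast^{[j-1]}}$, $b_j=\{\cc_{k-2}\}^{\ast^{[j-1]}}$ is in fact the corrected form of what the paper writes in \eqref{kk1}. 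The one place you genuinely diverge is the passage from \eqref{1c} to \eqref{2cc}. After applying Lemma \ref{4} and using $|\alpha|=k$ for $\alpha\in\tG^{(k)}$ (which, note, is the content of Lemma \ref{lemg}, not Lemma \ref{lemalpha} --- the latter concerns the sets $\cR^{(k,\gamma^{(k)})}$), the paper bounds $\sum_{\alpha\in\tG^{(k)}}\prod_j\alpha_j!$ by the generic counting bound of Lemma \ref{aaaa} over the full simplex $\cA_{2k+1}(k)\supset\tG^{(k)}$, i.e.\ by $(2(2k+1))^k$, and then must invoke Stirling's formula to play the resulting $(4k+2)^k$ off against the $1/k!$. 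You instead exploit the recursive structure $\tG^{(k)}=\ce^{(k)}+\tG^{(k-1)}\times\{0\}\times\{0\}$ directly: summing over the pairs $(j_0,\alpha')$ (which only over-counts the set) gives $\sum_{j_0}\bigl((\alpha'_{j_0}+1)\ \text{or}\ 1\bigr)=(k-1)+(2k-1)+2=3k$, hence $S_k\le 3kS_{k-1}\le 3^kk!$; this mirrors the $P_k=3\ell(\gamma^{(k)})\prod_jP_{k-1}$ recursion the paper carries out in Lemma \ref{lemdd} for the sets $\cR$, but applied to $\tG^{(k)}$. The $k!$ then cancels exactly against the denominator, Stirling is not needed, and you obtain the sharper base $9C^2(24\kappa^{-1})^{2\nu+1}|\omega|t\le 12eC^2(24\kappa^{-1})^{2\nu+1}|\omega|t$, so \eqref{2cc} follows a fortiori. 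Both routes work; yours is tighter and more self-contained, while the paper's reuses an off-the-shelf counting lemma at the cost of an extra Stirling step.
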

\begin{proof}
For $k=1$, by the exponential decay property
\eqref{ckc}, we have
\begin{align*}
\cc_1(t,n)-\cc_0(t,n)&\ll|\omega||n|\int_0^t\sum_{\substack{n_1,n_2,n_3\in\bZ^\nu\\n_1-n_2+n_3=n}}\prod_{j=1}^3|\{\cc_0(s,n_j)\}^{\ast^{[j-1]}}|{\rm d}s\\
&\ll|\omega||n|\int_0^t\sum_{\substack{m^{(1)}=(m_1,m_2,m_3)\in(\bZ^\nu)^3\\\cc\ca\cs(m^{(1)})=n}}\prod_{j=1}^3C
e^{-\frac{\kappa}{2}|m_j|}{\rm d}s\\
&\ll C^3|\omega|t\sum_{\substack{m^{(1)}=(m_1,m_2,m_3)\in(\bZ^\nu)^3\\\cc\ca\cs(m^{(1)})=n}}\sum_{\substack{\alpha=(\alpha_j)_{1\leq j\leq3}\in\tG^{(1)}}}\prod_{j=1}^3|m_j|^{\alpha_j}e^{-\frac{\kappa}{2}|m_j|}.
\end{align*}
This shows that \eqref{1c} holds for $k=1$.

Let $k\geq2$. Assume that \eqref{1c} is true for all $1<k^\prime<k$.

For $k$, it follows from \eqref{gsj} and the following decomposition
\begin{align}\label{deco}
|\prod_{j=1}^{j_0} a_j-\prod_{j=1}^{j_0} b_j|
\leq\sum_{J=1}^{j_0}\prod_{j=1}^{J-1}|b_j|\cdot|a_{J}-
b_{J}|\cdot\prod_{j=J+1}^{{j_0}}|a_j|,
\end{align}
where
\begin{align}\label{deco2}
\prod_{j=1}^0|b_j|:=1\quad\text{and}\quad\prod_{j={j_0}+1}^{j_0}|a_j|:=1
\end{align}
that
\begin{align}\label{kk1}
\cc_k(t,n)-\cc_{k-1}(t,n)\ll&|\omega||n|\int_0^t\sum_{\substack{n_1,n_2,n_3\in\bZ^\nu\\n_1-n_2+n_3=n}}|
\prod_{j=1}^3\{\cc_1(s,n_j)\}^{\ast^{[j-1]}}-\prod_{j=1}^3\{\cc_0(s,n_j)\}^{\ast^{[j-1]}}|{\rm d}s\nonumber\\
\ll&\Phi_1+\Phi_2+\Phi_3,
\end{align}
where
\begin{align*}
\Phi_1&=|\omega||n|\int_0^t\sum_{\substack{n_1,n_2,n_3\in\bZ^\nu\\n_1-n_2+n_3=n}}
|\cc_1(s,n_1)-\cc_0(s,n_1)||\cc_1(s,n_2)||\cc_1(s,n_3)|{\rm d}s;\\
\Phi_2&=|\omega||n|\int_0^t\sum_{\substack{n_1,n_2,n_3\in\bZ^\nu\\n_1-n_2+n_3=n}}
|\cc_0(s,n_1)||\cc_1(s,n_2)-\cc_0(s,n_2)||\cc_1(s,n_3)|{\rm d}s;\\
\Phi_3&=|\omega||n|\int_0^t\sum_{\substack{n_1,n_2,n_3\in\bZ^\nu\\n_1-n_2+n_3=n}}
|\cc_0(s,n_1)||\cc_0(s,n_2)||\cc_1(s,n_3)-\cc_0(s,n_3)|{\rm d}s.
\end{align*}

For $\Phi_1$, it follows from the induction hypothesis and the exponential decay property
\eqref{ckc} that
\begin{align}\label{epew}
\Phi_1\ll&|\omega||n|\int_0^t\sum_{\substack{n_1,n_2,n_3\in\bZ^\nu\\n_1-n_2+n_3=n}}
\frac{3^{k-2}C^{2k-1}(|\omega|s)^{k-1}}{(k-1)!}{\rm d}s\nonumber\\
&\sum_{\substack{m^{(k-1)}=(m_j)_{1\leq j\leq2k-1}\in(\bZ^\nu)^{2k-1}\\\cc\ca\cs(m^{(k-1)})=n_1}}\sum_{\substack{\alpha=(\alpha_j)_{1\leq j\leq2k-1}\in\tG^{(k-1)}}}\prod_{j=1}^{2k-1}|m_j|^{\alpha_j}e^{-\frac{\kappa}{2}|m_j|}Ce^{-\frac{\kappa}{2}|n_2|}
Ce^{-\frac{\kappa}{2}|n_3|}
\nonumber\\
=&\frac{3^{k-2}C^{2k+1}(|\omega|t)^k}{k!}\sum_{\substack{m^{(k)}=(m_j)_{1\leq j\leq 2k+1}\in(\bZ^\nu)^{2k+1}\\\cc\ca\cs(m^{(k)})=n}}
\sum_{\alpha\in\ce^{(k)}}\prod_{j=1}^{2k+1}|m_j|^{\alpha_j}\nonumber\\
&\sum_{\alpha=(\alpha_j)_{1\leq j\leq2k+1}\in\tG^{(k-1)}\times\{0\}\times\{0\}}\prod_{j=1}^{2k+1}|m_j|^{\alpha_j}e^{-\frac{\kappa}{2}|m_j|}
\nonumber\\
=&\frac{3^{k-2}C^{2k+1}(|\omega|t)^k}{k!}\sum_{\substack{m^{(k)}=(m_j)_{1\leq j\leq 2k+1}\in(\bZ^\nu)^{2k+1}\\\cc\ca\cs(m^{(k)})=n}}\sum_{\alpha\in\tG^{(k)}}\prod_{j=1}^{2k+1}|m_j|^{\alpha_j}
e^{-\frac{\kappa}{2}|m_j|}
.\end{align}
Here we use $m_{2k}$ and $m_{2k+1}$ to label $n_2$ and $n_3$ respectively, and set
$$
m^{(k)}=(m^{(k-1)},m_{2k},m_{2k+1})\in(\bZ^\nu)^{2k-1}\times\bZ^\nu\times\bZ^\nu\simeq(\bZ^\nu)^{2k+1}.
$$
Hence
\begin{align*}
n&=n_1-n_2+n_3\\
&=\cc\ca\cs(m^{(k-1)})-m_{2k}+m_{2k+1}\\
&=\cc\ca\cs(m^{(k)}).
\end{align*}

Analogously, for $j=2,3$, we have
\begin{align}\label{epews}
\Phi_j\ll\frac{3^{k-2}C^{2k+1}(|\omega|t)^k}{k!}\sum_{\substack{m^{(k)}=(m_j)_{1\leq j\leq 2k+1}\in(\bZ^\nu)^{2k+1}\\\cc\ca\cs(n^{(k)})=n}}\sum_{\alpha\in\tG^{(k)}}\prod_{j=1}^{2k+1}|m_j|^{\alpha_j}e^{-\frac{\kappa}{2}|m_j|}
.\end{align}

Plugging \eqref{epew}-\eqref{epews} into \eqref{kk1} yields that
\[
\cc_k(t,n)-\cc_{k-1}(t,n)\ll\frac{3^{k-1}C^{2k+1}(|\omega|t)^k}{k!}\sum_{\substack{m^{(k)}=(m_j)_{1\leq j\leq 2k+1}\in(\bZ^\nu)^{2k+1}\\\cc\ca\cs(n^{(k)})=n}}\sum_{\alpha\in\tG^{(k)}}\prod_{j=1}^{2k+1}|m_j|^{\alpha_j}e^{-\frac{\kappa}{2}|m_j|}
.\]
This proves that \eqref{1c} holds for $k$, and hence for all $k\geq1$ by induction.

Next we prove \eqref{2cc}. It follows from Lemmas \ref{4} and \ref{lemg} that
\begin{align}
\sum_{\substack{m^{(k)}=(m_j)_{1\leq j\leq 2k+1}\in(\bZ^\nu)^{2k+1}\\\cc\ca\cs(n^{(k)})=n}}\prod_{j=1}^{2k+1}|m_j|^{\alpha_j}e^{-\frac{\kappa}{2}|m_j|}
&\ll e^{-\frac{\kappa}{4}|n|}(24\kappa^{-1})^{|\alpha|+(2k+1)\nu}\prod_{j=1}^{2k+1}\alpha_j!\nonumber\\
&=e^{-\frac{\kappa}{4}|n|}(24\kappa^{-1})^{\nu}((24\kappa^{-1})^{2\nu+1})^k\prod_{j=1}^{2k+1}\alpha_j!.\label{alsx}
\end{align}
Inserting \eqref{alsx} into \eqref{1c}, we have
\begin{align}\label{deno}
\cc_k(t,n)-\cc_{k-1}(t,n)\ll e^{-\frac{\kappa}{4}|n|}(24\kappa^{-1})^{\nu}\frac{3^{k-1}C^{2k+1}((24\kappa^{-1})^{2\nu+1}|\omega|t)^k}{k!}
\sum_{\alpha\in\tG^{(k)}}\prod_{j=1}^{2k+1}\alpha_j!.
\end{align}
By Lemma \ref{lemg}, we know that $k=L<N=2k+1$.  It follows from Lemma \ref{aaaa} that
\begin{align}\label{gl}
\sum_{\alpha\in\tG^{(k)}}\prod_{j=1}^{2k+1}\alpha_j!\ll(2(2k+1))^k.
\end{align}
It follows from Stirling's formula (see Lemma \ref{stf}) that the growth of $k^k$ can be balanced by $k!$ (up to an exponential factor) appearing in the denominator of the right-hand side of \eqref{deno}, or rather,
\begin{align}
\frac{(2(2k+1))^{k}}{k!}&\ll\frac{(2(2k+1))^{k}}{k^ke^{-k}}\nonumber\\
&=4^ke^k\left(1+\frac{1}{2k}\right)^k\nonumber\\
&\ll4^ke^{k+\frac{1}{2}}.\label{lsf}
\end{align}
Plugging  \eqref{gl} and \eqref{lsf} into \eqref{deno} yields that
\[\cc_k(t,n)-\cc_{k-1}(t,n)\ll e^{-\frac{\kappa}{4}|n|}C^\prime\left(12eC^2(24\kappa^{-1})^{2\nu+1}|\omega|t\right)^k.\]
This proves \eqref{2cc}.

Lastly, we prove \eqref{cauchy}. It follows from the triangle inequality that
\begin{align*}
\cc_{k+q}(t,n)-\cc_k(t,n)&=\sum_{j=1}^q\cc_{k+j}(t,n)-\cc_{k+j-1}(t,n)\\
&\ll e^{-\frac{\kappa}{4}|n|}C^\prime\sum_{j=1}^q\left(12eC^2(24\kappa^{-1})^{2\nu+1}|\omega|t\right)^{k+j}\\
&\ll e^{-\frac{\kappa}{4}|n|}C^{\prime\prime}\left(12eC^2(24\kappa^{-1})^{2\nu+1}|\omega|t\right)^{k},
\end{align*}
provided that
\begin{align}\label{t3}
0<t<\frac{1}{12eC^2(24\kappa^{-1})^{2\nu+1}|\omega|}:=t_3.
\end{align}
This shows that \eqref{cauchy} holds true and completes the proof of Lemma \ref{lemmc}.
\end{proof}

\subsubsection{Existence and Convergence}

From the argument in the last subsubsection we know that the Picard sequence $\{\cc_k(t,n)\}$ is a Cauchy sequence. Let $\cc(t,n)$ be the limit function defined on $[0,t_1)\times\bZ^\nu$, where $t_1:=\min\{t_2,t_3\}$.
According to the triangle inequality, we have
\begin{align}\label{skd}
\cc(t,n)\ll Ce^{-\frac{\kappa}{2}|n|},\quad\forall(t,n)\in[0,t_1)\times\bZ^\nu.
\end{align}

Set
\begin{align*}
u(t,x)&=\sum_{n\in\bZ^\nu}\cc(t,n)e^{{\rm i}\langle n\rangle x};\\
(\partial^j_xu)(t,x)&=\sum_{n\in\bZ^\nu}({\rm i}\langle n\rangle)^j\cc(t,n)e^{{\rm i}\langle n\rangle x},\quad j=1,2;\\
(\partial_tu)(t,x)&=\sum_{n\in\bZ^\nu}\left\{-{\rm i}\langle n\rangle^2\cc(t,n)+{\rm i}\langle n\rangle\sum_{\substack{n_1,n_2,n_3\in\bZ^\nu\\n_1-n_2+n_3=n}}\prod_{j=1}^{3}\{\cc(t,n_j)\}^{\ast^{[j-1]}}e^{{\rm i}\langle n\rangle x}\right\}e^{{\rm i}\langle n\rangle x}.
\end{align*}

It follows from \eqref{skd} and Lemma \ref{l1l} that
\begin{align*}
\sum_{n\in\bZ^\nu}|\cc(t,n)|&\ll C\sum_{n\in\bZ^\nu}e^{-\frac{\kappa}{2}|n|}\\
&\ll C(6\kappa^{-1})^\nu;\\[1mm]
\sum_{n\in\bZ^\nu}|n|^2|\cc(t,n)|&\ll C\sum_{n\in\bZ^\nu}|n|^2e^{-\frac{\kappa}{2}|n|}\\
&=C\sum_{n\in\bZ^\nu}\underbrace{|n|^2e^{-\frac{\kappa}{4}|n|}}_{\text{bounded by}~2(4\kappa^{-1})^{2}}e^{-\frac{\kappa}{4}|n|}\\
&\ll 2(4\kappa^{-1})^{2}(12\kappa^{-1})^\nu C;
\end{align*}
and
\begin{align*}
\sum_{n\in\bZ^\nu}|n|\sum_{\substack{n_1,n_2,n_3\in\bZ^\nu\\n_1-n_2+n_3=n}}\prod_{j=1}^{3}|\cc(t,n_j)|&\ll
\sum_{n\in\bZ^\nu}|n|\sum_{\substack{n_1,n_2,n_3\in\bZ^\nu\\n_1-n_2+n_3=n}}\prod_{j=1}^{3} Ce^{-\frac{\kappa}{2}|n_j|}\\
&\ll C^3\sum_{n\in\bZ^\nu}|n|e^{-\frac{\kappa}{4}|n|}\sum_{n_1,n_2,n_3\in\bZ^\nu}\prod_{j=1}^{3}e^{-\frac{\kappa}{4}|n_j|}\\
&\ll C^3(12\kappa^{-1})^{3\nu}\sum_{n\in\bZ^\nu}\underbrace{|n|e^{-\frac{\kappa}{8}|n|}}_{\text{bounded by}~8\kappa^{-1}}e^{-\frac{\kappa}{8}|n|}\\
&\ll C^3(12\kappa^{-1})^{3\nu}8\kappa^{-1}\sum_{n\in\bZ^\nu}e^{-\frac{\kappa}{8}|n|}\\
&\ll C^3(12\kappa^{-1})^{3\nu}8\kappa^{-1}(24\kappa^{-1})^\nu.
\end{align*}
Hence $u, \partial_x^ju$ and $\partial_tu$ are well defined, and $u$ is a classical spatially quasi-periodic solution to \eqref{dnls} 
with initial data \eqref{id}.

\subsubsection{Uniqueness}

In this subsubsection we prove the uniqueness part of Theorem \ref{dnlsth}.

Let $\cc$ and $\cd$ be two functions of $(t,n)\in[0,t_1)\times\bZ^\nu$. Assume that they satisfy:
\begin{itemize}
  \item (same initial data)~$\cc(0,n)=\cd(0,n)$ for all $n\in\bZ^\nu$;
  \item (exponential decay property)~there exists $(C_1,\rho)\in(0,\infty)\times(0,1]$, where $\rho=\kappa/2$, such that
\[\cc(t,n)\ll C_1e^{-\rho|n|}\quad\text{and}\quad\cd(t,n)\ll C_1e^{-\rho|n|},\quad\forall(t,n)\in[0,t_1)\times\bZ^\nu;\]
  \item (integral equations)~for all $n\in\bZ^\nu$,
  \begin{align*}
  \cc(t,n)&=e^{-{\rm i}\langle n\rangle^2t}\cc(0,n)+{\rm i}\langle n\rangle\int_0^te^{-{\rm i}\langle n\rangle^2(t-s)}\sum_{\substack{n_1,n_2,n_3\in\bZ^\nu\\n_1-n_2+n_3=n}}\prod_{j=1}^{3}\{\cc(s,n_j)\}^{\ast^{[j-1]}}{\rm d}s;\\
  \cd(t,n)&=e^{-{\rm i}\langle n\rangle^2t}\cd(0,n)+{\rm i}\langle n\rangle\int_0^te^{-{\rm i}\langle n\rangle^2(t-s)}\sum_{\substack{n_1,n_2,n_3\in\bZ^\nu\\n_1-n_2+n_3=n}}\prod_{j=1}^{3}\{\cd(s,n_j)\}^{\ast^{[j-1]}}{\rm d}s.
  \end{align*}
\end{itemize}
\begin{lemm}
For all $k\geq1$, we have
\begin{align}
&\cc(t,n)-\cd(t,n)\nonumber\\
\ll&\frac{3^{2k-1}\times2C_1^{2k+1}(|\omega|t)^k}{k!}\sum_{\substack{m^{(k)}=(m_j)_{1\leq j\leq2k+1}\in(\bZ^\nu)^{2k+1}\\\cc\ca\cs(m^{(k)})=n}}\sum_{\alpha=(\alpha_j)_{1\leq j\leq2k+1}\in\tG^{(k)}}\prod_{j=1}^{2k+1}|m_j|^{\alpha_j}e^{-\frac{\kappa}{2}|m_j|}\label{cddd}\\
\ll&2C_1e^{1/2}(24\kappa^{-1})^\nu (36C_1^2e(24\kappa^{-1})^{2\nu+1}|\omega|t)^ke^{-\frac{\rho}{2}|n|}.
\end{align}
This implies that $\cc(t,n)\equiv\cd(t,n)$ for all $(t,n)\in[0,t_4)\times\bZ^\nu$; see \eqref{t4}.

\end{lemm}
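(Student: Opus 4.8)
The plan is to run a self-referential Gronwall-type iteration on the difference $\cc-\cd$, structurally parallel to the Cauchy-sequence estimate of Lemma~\ref{lemmc}, and then to let the iteration depth tend to infinity. First I would subtract the two integral equations. Since $\cc$ and $\cd$ share the same initial data, the linear terms $e^{-{\rm i}\langle n\rangle^2 t}\cc(0,n)$ and $e^{-{\rm i}\langle n\rangle^2 t}\cd(0,n)$ cancel, leaving only the Duhamel term with the cubic integrand $\prod_{j=1}^3\{\cc(s,n_j)\}^{\ast^{[j-1]}}-\prod_{j=1}^3\{\cd(s,n_j)\}^{\ast^{[j-1]}}$. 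Applying the product-difference decomposition \eqref{deco}--\eqref{deco2} with $j_0=3$, and noting that conjugation is an isometry so that $|\{\cc\}^{\ast^{[j-1]}}-\{\cd\}^{\ast^{[j-1]}}|=|\cc(s,n_j)-\cd(s,n_j)|$, I split the integrand into three terms, each carrying exactly one difference factor and two genuine-solution factors. Bounding the latter two by the a priori decay hypothesis $|\cc|,|\cd|\ll C_1 e^{-\rho|\cdot|}$ (with $\rho=\kappa/2$) yields the linear integral inequality
$$|\cc(t,n)-\cd(t,n)|\ll |\omega||n|\int_0^t\sum_{\substack{n_1,n_2,n_3\in\bZ^\nu\\n_1-n_2+n_3=n}}\,\sum_{J=1}^{3}C_1^2\Big(\textstyle\prod_{j\ne J}e^{-\rho|n_j|}\Big)\,|\cc(s,n_J)-\cd(s,n_J)|\,{\rm d}s,$$
which is the engine of the whole argument.

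Next I would establish \eqref{cddd} by induction on $k$, mirroring the computation \eqref{kk1}--\eqref{epews} in the proof of Lemma~\ref{lemmc} but with the consecutive iterates $\cc_k,\cc_{k-1}$ replaced by the two solutions $\cc,\cd$ and with the uniform a priori constant $C_1$ in place of $C$. For the base case $k=1$ I bound the innermost difference crudely by $|\cc-\cd|\ll 2C_1 e^{-\rho|\cdot|}$; since the prefactor $|n|=|\cc\ca\cs(m^{(1)})|\leq\sum_j|m_j|$ reproduces exactly the exponents indexed by $\tG^{(1)}=\{(1,0,0),(0,1,0),(0,0,1)\}$, this matches the stated coefficient. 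For the inductive step I expand the difference once more via the displayed inequality and substitute the $(k-1)$-bound for each of the three terms $|\cc(s,n_J)-\cd(s,n_J)|$: the three decomposition terms, the two bounded factors $C_1^2$, the cubic branching (one frequency splits into three, hence $2k+1$ summation variables $m^{(k)}$ after $k$ steps, with $\cc\ca\cs(m^{(k)})=n$ by \eqref{as2}), the single $|n|$ prefactor (which promotes one exponent, so that $\alpha$ ranges over $\tG^{(k)}=\ce^{(k)}+\tG^{(k-1)}\times\{0\}\times\{0\}$), and the nested time integral $\int_0^t s^{k-1}\,{\rm d}s=t^k/k$ (which converts $1/(k-1)!$ into $1/k!$) together produce the coefficient appearing in \eqref{cddd} and close the induction.

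I would then pass from \eqref{cddd} to the geometric bound. The key analytic input is the decay estimate of Lemma~\ref{4}, applied with $\kappa$ replaced by $\kappa/2$ (this is Lemma~\ref{lemg}): since each $m_j$ carries the weight $e^{-\rho|m_j|}=e^{-\frac{\kappa}{2}|m_j|}$, the sum over $m^{(k)}$ with $\cc\ca\cs(m^{(k)})=n$ is controlled by $e^{-\frac{\kappa}{4}|n|}(24\kappa^{-1})^{|\alpha|+(2k+1)\nu}\prod_j\alpha_j!$, and $e^{-\frac{\kappa}{4}|n|}=e^{-\frac{\rho}{2}|n|}$. Using $|\alpha|=k$ for $\alpha\in\tG^{(k)}$ together with the factorial estimate $\sum_{\alpha\in\tG^{(k)}}\prod_j\alpha_j!\ll(2(2k+1))^k$ from Lemma~\ref{aaaa} (the comparison $k<2k+1$ being supplied by Lemma~\ref{lemg}), the only dangerous growth is $(2(2k+1))^k/k!$, which Stirling's formula (Lemma~\ref{stf}) bounds by $4^k e^{k+1/2}$. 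Collecting the $k$-th powers assembles the geometric ratio $36\,e\,C_1^2(24\kappa^{-1})^{2\nu+1}|\omega|t$, with the leftover constants forming $2C_1 e^{1/2}(24\kappa^{-1})^{\nu}$, exactly as in the second line of the statement.

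Finally, the conclusion is immediate: the right-hand side of the geometric bound does not involve $\cc-\cd$ and tends to $0$ as $k\to\infty$ as soon as the ratio is strictly below $1$, i.e. for $0<t<t_4:=\big(36\,e\,C_1^2(24\kappa^{-1})^{2\nu+1}|\omega|\big)^{-1}$ (see \eqref{t4}). Since the left-hand side $\cc(t,n)-\cd(t,n)$ is independent of $k$, letting $k\to\infty$ forces $\cc(t,n)=\cd(t,n)$ for every $n\in\bZ^\nu$ and every $t\in[0,t_4)$. I expect the main obstacle to be precisely the derivative-loss factor: the operator ${\rm i}\partial_x$ contributes the prefactor $|n|$ at every level, which after iteration becomes the polynomial weights $\prod_j|m_j|^{\alpha_j}$ and, through Lemma~\ref{4}, the factorials $\prod_j\alpha_j!$; showing that this factorial growth is defeated by the $1/k!$ produced by the iterated time integrals (so that the series still converges) is the heart of the estimate, and is exactly where the exponential decay hypothesis and Stirling's formula are indispensable.
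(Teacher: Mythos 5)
Your proposal is correct and follows essentially the same route as the paper: subtract the Duhamel formulas, apply the telescoping product-difference decomposition \eqref{deco} to get three terms $\Psi_1,\Psi_2,\Psi_3$ each carrying one difference factor, run the induction on $k$ exactly as in Lemma \ref{lemmc} with the a priori constant $C_1$ and the crude base bound $|\cc-\cd|\ll 2C_1e^{-\rho|\cdot|}$, then close via Lemma \ref{4}, Lemma \ref{lemg}, Lemma \ref{aaaa} and Stirling, and let $k\to\infty$. The only cosmetic difference is that the paper's $t_4$ also takes a minimum with $t_1$ (the common interval of definition), and the powers of $3$ in the intermediate coefficients are tracked slightly more generously in the paper; both are harmless since all bounds are one-sided.
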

\begin{proof}
We first have
\begin{align}
\cc(t,n)-\cd(t,n)&\ll|\omega||n|\int_0^t\sum_{\substack{n_1,n_2,n_3\in\bZ^\nu\\n_1-n_2+n_3=n}}|
\prod_{j=1}^{3}\{\cc(s,n_j)\}^{\ast^{[j-1]}}-\prod_{j=1}^{3}\{\cd(s,n_j)\}^{\ast^{[j-1]}}|{\rm d}s\nonumber\\
&\ll\Psi_1+\Psi_2+\Psi_3,\label{cdd}
\end{align}
where
\begin{align*}
\Psi_1&=|\omega||n|\int_0^t\sum_{\substack{n_1,n_2,n_3\in\bZ^\nu\\n_1-n_2+n_3=n}}|
\cc(s,n_1)-\cd(s,n_1)||\cc(s,n_2)||\cc(s,n_3)|{\rm d}s;\\
\Psi_2&=|\omega||n|\int_0^t\sum_{\substack{n_1,n_2,n_3\in\bZ^\nu\\n_1-n_2+n_3=n}}|\cd(s,n_1)||
\cc(s,n_2)-\cd(s,n_2)||\cc(s,n_3)|{\rm d}s;\\
\Psi_3&=|\omega||n|\int_0^t\sum_{\substack{n_1,n_2,n_3\in\bZ^\nu\\n_1-n_2+n_3=n}}|\cd(s,n_1)||\cd(s,n_2)||
\cc(s,n_3)-\cd(s,n_3)|{\rm d}s.
\end{align*}

For $\Psi_1$, we have
\begin{align}
\Psi_1
&\ll|\omega||n|\int_0^t\sum_{\substack{n_1,n_2,n_3\in\bZ^\nu\\n_1-n_2+n_3=n}}(|
\cc(s,n_1)|+|\cd(s,n_1)|)|\cc(s,n_2)||\cc(s,n_3)|{\rm d}s\nonumber\\
&\ll 2C_1^3|\omega|t|n|\sum_{\substack{n_1,n_2,n_3\in\bZ^\nu\\n_1-n_2+n_3=n}}\prod_{j=1}^3e^{-\rho|n_j|}\nonumber\\
&\ll 2C_1^3|\omega|t\sum_{\substack{m^{(1)}=(m_1,m_2,m_3)\in(\bZ^\nu)^3\\\cc\ca\cs(m^{(1)})=n}}\sum_{\alpha\in\tG^{(1)}}\prod_{j=1}^3|m_j|^{\alpha_j}e^{-\rho|m_j|}.\label{psi1}
\end{align}
Similarly, for $j=2,3$, we have
\begin{align}\label{psij}
\Psi_j\ll2C_1^3|\omega|t\sum_{\substack{m^{(1)}=(m_1,m_2,m_3)\in(\bZ^\nu)^3\\\cc\ca\cs(m^{(1)})=n}}\sum_{\alpha\in\tG^{(1)}}\prod_{j=1}^3|m_j|^{\alpha_j}e^{-\rho|m_j|}.
\end{align}
Plugging \eqref{psi1} and \eqref{psij} into \eqref{cdd} yields that
\begin{align}\label{112}
\cc(t,n)-\cd(t,n)\ll3\times
2C_1^3|\omega|t\sum_{\substack{m^{(1)}=(m_1,m_2,m_3)\in(\bZ^\nu)^3\\\cc\ca\cs(m^{(1)})=n}}\sum_{\alpha\in\tG^{(1)}}\prod_{j=1}^3|m_j|^{\alpha_j}e^{-\rho|m_j|}
,\quad\forall n\in\bZ^\nu.\end{align}
This shows that \eqref{cddd} holds for $k=1$.

Let $k\geq2$. Assume \eqref{cddd} holds for all $1<k^\prime<k$. For $k$, by induction hypothesis, we have
\begin{align*}
\Psi_1\ll& |\omega||n|\int_0^t\sum_{\substack{n_1,n_2,n_3\in\bZ^\nu\\n_1-n_2+n_3=n}}
\frac{3^{2k-3}\times2C_1^{2k-1}(|\omega|s)^{k-1}}{(k-1)!}\\
&\sum_{\substack{m^{(k-1)}=(m_j)_{1\leq j\leq2k-1}\in(\bZ^\nu)^{2k-1}\\\cc\ca\cs(m^{(k-1)})=n_1}}\sum_{\alpha=(\alpha_j)_{1\leq j\leq2k-1}\in\tG^{(k-1)}}\prod_{j=1}^{2k-1}|m_j|^{\alpha_j}
e^{-\rho|m_j|}C_1e^{-\rho|n_2|}C_1e^{-\rho|n_3|}{\rm d}s\\
\ll&\frac{3^{2k-3}\times 2C_1^{2k+1}(|\omega|t)^k}{k!}\sum_{\substack{m^{(k)}=(m_j)_{1\leq j\leq 2k+1}\in(\bZ^\nu)^{2k+1}\\\cc\ca\cs(m^{(k)})=n}}\sum_{\alpha\in\tG^{(k)}}\prod_{j=1}^{2k+1}|m_j|^{\alpha_j}
e^{-\rho|m_j|}.
\end{align*}
Analogously, for $j=2,3$, we have
\begin{align*}
\Psi_j\ll\frac{3^{2k-3}\times 2C_1^{2k+1}(|\omega|t)^k}{k!}\sum_{\substack{m^{(k)}=(m_j)_{1\leq j\leq 2k+1}\in(\bZ^\nu)^{2k+1}\\\cc\ca\cs(m^{(k)})=n}}\sum_{\alpha\in\tG^{(k)}}\prod_{j=1}^{2k+1}|m_j|^{\alpha_j}
e^{-\rho|m_j|}.
\end{align*}
Thus
\begin{align*}
\cc(t,n)-\cd(t,n)\ll\frac{3^{2k-1}\times 2C_1^{2k+1}(|\omega|t)^k}{k!}\sum_{\substack{m^{(k)}=(m_j)_{1\leq j\leq 2k+1}\in(\bZ^\nu)^{2k+1}\\\cc\ca\cs(m^{(k)})=n}}\sum_{\alpha\in\tG^{(k)}}\prod_{j=1}^{2k+1}|m_j|^{\alpha_j}
e^{-\rho|m_j|}.
\end{align*}
This shows that \eqref{cddd} is true for $k$, and hence for all $k\geq1$ by induction.

It follows from
\begin{align*}
\frac{1}{k!}\sum_{\substack{m^{(k)}=(m_j)_{1\leq j\leq 2k+1}\in(\bZ^\nu)^{2k+1}\\\cc\ca\cs(m^{(k)})=n}}\sum_{\alpha\in\tG^{(k)}}\prod_{j=1}^{2k+1}|m_j|^{\alpha_j}
e^{-\rho|m_j|}\ll e^{-\frac{\rho}{2}|n|}(12\rho^{-1})^\nu e^{1/2}\left(4e(12\rho^{-1})^{2\nu+1}\right)^k
\end{align*}
that
\[\cc(t,n)-\cd(t,n)\ll \frac{2}{3}C_1e^{1/2}(12\rho^{-1})^\nu (36C_1^2e(12\rho^{-1})^{2\nu+1}|\omega|t)^ke^{-\frac{\rho}{2}|n|}.\]
Hence
\[\cc(t,n)\equiv\cd(t,n),\quad\forall n\in\bZ^\nu,\]
provided that
\begin{align}\label{t4}
0<t<\min\left\{t_1,\frac{1}{36C_1^2e(12\rho^{-1})^{2\nu+1}|\omega|}\right\}:=t_4.
\end{align}

\end{proof}

\subsubsection{Asymptotic Dynamics}
In this subsubsection we prove that, in a weakly nonlinear setting, within the given time scale, the nonlinear solution will be asymptotic to the associated linear solution in the sense of both sup-norm and analytic Sobolev-norm.

Clearly the linear solution is given by the following Fourier series
$$u_{\text{linear}}(t,x)=\sum_{n\in\mathbb Z^\nu}e^{-{\rm i}\langle n\rangle^2t}\cc(n)e^{{\rm i}\langle n\rangle x}.$$

For the asymptotic dynamics in the sense of sup-norm, it follows from the uniform-in-time decay of the Fourier coefficients that
\begin{align*}
\|(u-u_{\text{linear}})(t)\|_{L_x^\infty(\mathbb R)}&\leq|\epsilon||\omega|\sum_{n\in\mathbb Z^\nu}|n|\int_0^t\sum_{\substack{n_j\in\bZ^\nu,~~j=1,\cdots,3\\\sum_{j=1}^{3}(-1)^{j-1}n_j=n}}
\prod_{j=1}^{3}|\cc(s,n_j)|{\rm d}s\\
&\leq|\epsilon||\omega|t\sum_{n\in\mathbb Z^\nu}\sum_{\substack{n_j\in\mathbb Z^\nu, j=1,\cdots, 3\\n_1-n_2+n_3=n}}\prod_{j=1}^{3}e^{-\frac{\kappa}{2}|n_j|}\\
&\lesssim |\epsilon|^{\eta},
\end{align*}
where $t=|\epsilon|^{-1+\eta}$ with $0<\eta\ll1$. This implies that
\[\|(u-u_{\text{linear}})(t)\|_{L_x^\infty(\mathbb R)}\rightarrow0,\quad{\text{as}}~\epsilon\rightarrow0.\]

For the asymptotic dynamics in the sense of analytic Sobolev-norm, it follows from the uniform-in-time decay of the Fourier coefficients that
\begin{align*}
\|(u-u_{\text{linear}})(t)\|^2_{\mathcal H_x^{\varrho}(\mathbb R)}
&=\|e^{\varrho|n|}(\widehat u-\widehat{u_{\text{linear}}})(t)\|^2_{\ell_{n}^2(\mathbb Z^\nu)}\\
&=\sum_{n\in\mathbb Z^\nu}e^{2\varrho|n|}|(\widehat u-\widehat{u_{\text{linear}}})(t)|^2\\
&=\sum_{n\in\mathbb Z^\nu}e^{2\varrho|n|}|\cc(t,n)-e^{-{\rm i}\langle n\rangle^2t}\cc(n)|^2\\
&\leq|\epsilon|^2|\omega|^2\sum_{n\in\mathbb Z^\nu}|n|^2e^{2\varrho|n|}\left\{\int_0^t
\sum_{\substack{n_1,n_2,n_3\in\bZ^\nu\\n_1-n_2+n_3=n}}\prod_{j=1}^{3}|\cc(s,n_j)|{\rm d}s\right\}^2\\
&\lesssim|\epsilon|^2\sum_{n\in\mathbb Z^\nu}|n|^2e^{2\varrho|n|}\left\{\int_0^t
\sum_{\substack{n_1,n_2,n_3\in\bZ^\nu\\n_1-n_2+n_3=n}}\prod_{j=1}^{3}e^{-\frac{\kappa}{2}|n_j|}{\rm d}s\right\}^2\\
&\leq(|\epsilon|t)^2\sum_{n\in\mathbb Z^\nu}|n|^2e^{2\varrho|n|}\left\{
\sum_{\substack{n_1,n_2,n_3\in\bZ^\nu\\n_1-n_2+n_3=n}}\prod_{j=1}^{3}e^{-\frac{\kappa}{4}|n_j|}\cdot \prod_{j=1}^{3}e^{-\frac{\kappa}{4}|n_j|}\right\}^2\\
&=(|\epsilon|t)^2\sum_{n\in\mathbb Z^\nu}|n|^2e^{2\varrho|n|}\left\{
\sum_{\substack{n_1,n_2,n_3\in\bZ^\nu\\n_1-n_2+n_3=n}}\prod_{j=1}^{3}e^{-\frac{\kappa}{4}|n_j|}\cdot e^{-\frac{\kappa}{4}\sum_{j=1}^3|n_j|}\right\}^2\\
&\leq(|\epsilon|t)^2\sum_{n\in\mathbb Z^\nu}|n|^2e^{2\varrho|n|}\left\{
\sum_{\substack{n_1,n_2,n_3\in\bZ^\nu\\n_1-n_2+n_3=n}}\prod_{j=1}^{3}e^{-\frac{\kappa}{4}|n_j|}\cdot e^{-\frac{\kappa}{4}|n_1-n_2+n_3|}\right\}^2\\
&=(|\epsilon|t)^2\sum_{n\in\mathbb Z^\nu}|n|^2e^{2\varrho|n|}\left\{
\sum_{\substack{n_1,n_2,n_3\in\bZ^\nu\\n_1-n_2+n_3=n}}\prod_{j=1}^{3}e^{-\frac{\kappa}{4}|n_j|}\cdot e^{-\frac{\kappa}{4}|n|}\right\}^2\\
&\leq(|\epsilon|t)^2\sum_{n\in\mathbb Z^\nu}|n|^2e^{-\left(\frac{\kappa}{2}-2\varrho\right)|n|}\left\{
\sum_{n_1,n_2,n_3\in\mathbb Z^\nu}\prod_{j=1}^{3}e^{-\frac{\kappa}{4}|n_j|}\right\}^2\\
&=(|\epsilon|t)^2\sum_{n\in\mathbb Z^\nu}|n|^2e^{-\left(\frac{\kappa}{2}-2\varrho\right)|n|}
\left\{
\prod_{j=1}^3\sum_{n_j\in\mathbb Z^\nu}e^{-\frac{\kappa}{4}|n_j|}\right\}^2\\
&\leq(12\kappa^{-1})^6(|\epsilon|t)^2\sum_{n\in\mathbb Z^\nu}\underbrace{|n|^2e^{-\left(\frac{\kappa}{4}-4\varrho\right)|n|}}_{\text{uniform bounded}}\cdot e^{-\left(\frac{\kappa}{4}-4\varrho\right)|n|}\\
&\leq3(\kappa/4-4\rho)^{-1}(12\kappa^{-1})^6(|\epsilon|t)^2\\
&\lesssim|\epsilon|^{2\eta},
\end{align*}
provided that $t=|\epsilon|^{-1+\eta}$ with $0<\eta\ll 1$, and
\[0<\frac{\kappa}{4}-4\varrho\leq1.\]
This shows that
\[\|(u-u_{\text{linear}})(t)\|_{H_x^{\varrho}(\mathbb R)}\rightarrow0,\quad\text{as}~\epsilon\rightarrow 0.\]
The proof of Theorem \ref{dnlsth} is competed.

\subsection{gdNLS}

Consider the following generalized derivative nonlinear Schr\"odinger (gdNLS for short) equation
\begin{align}\label{gdnls}
\tag{gdNLS}{\rm i}\partial_tu+\partial_{xx}u+{\rm i}\partial_x(|u|^{2p}u)=0
\end{align}
with quasi-periodic initial data \eqref{id},
where $p\in\bN$ is a parameter describing the strength of the nonlinearity; see \cite{CG2017MMM}.

The case $p=1$ in \eqref{gdnls} is the classical derivative NLS \eqref{dnls}, as discussed above.

For the general case, if there exists $(\calA,\fr)\in(0,\infty)\times(0,1]$ such that
\begin{align}
c(n)\ll\calA^{\frac{1}{2p}} e^{-\fr|n|},\quad\forall n\in\bZ^\nu,
\end{align}
using the combinatorial analysis method with Feynman diagram and {\bf pcc} label method,
we can prove that
quasi-periodic Cauchy problem \eqref{gdnls}-\eqref{id} has a unique spatially quasi-periodic solution, locally in time and globally in space, with the same frequency vector (it retains the same spatial quasi-periodicity), in the (classical, analytic) sense w.r.t. (time, space), and it is asymptotic to the linear solution within the time scale in a weakly nonlinear setting. 

\section{Appendix}
\begin{lemm}\label{l2l}
For any given $1\leq m\in\mathbb N$ and $K>0$, we have
\begin{align}\label{eq}
y^me^{-Ky}\leq m!(K^{-1})^m,\quad \forall y\geq0.
\end{align}
\end{lemm}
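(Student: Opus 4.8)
The plan is to bound $e^{-Ky}$ from above by exploiting the nonnegativity of every term in the power series of the exponential, which turns the whole lemma into a one-line algebraic manipulation. Since $K>0$ and $y\geq 0$, every summand of $e^{Ky}=\sum_{n\geq 0}(Ky)^n/n!$ is nonnegative, so discarding all but the $n=m$ term yields the one-sided bound $e^{Ky}\geq (Ky)^m/m!$.

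First I would dispose of the trivial case $y=0$, where the left-hand side of \eqref{eq} vanishes and the inequality is immediate. For $y>0$ I would invert the displayed bound to get $e^{-Ky}\leq m!\,(Ky)^{-m}$ and then multiply both sides by $y^m$. The factor $y^m$ cancels against the $y^{-m}$ inside $(Ky)^{-m}$, leaving the constant $K^{-m}$, and one lands exactly on $y^m e^{-Ky}\leq m!\,(K^{-1})^m$, which is the claim. No further estimate is needed.

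There is essentially no obstacle here; the only point worth flagging is that the bound is not sharp. The true maximum of $f(y)=y^m e^{-Ky}$ over $y\geq 0$ is $m^m K^{-m}e^{-m}$, attained at the critical point $y=m/K$, and the lemma replaces the factor $m^m e^{-m}$ by the larger $m!$. An alternative route would be to locate this maximum directly by calculus, differentiating $f$ and solving $f'(y)=y^{m-1}e^{-Ky}(m-Ky)=0$; but that approach then requires the auxiliary inequality $m^m e^{-m}\leq m!$, which is itself just a rearrangement of $e^m\geq m^m/m!$, i.e.\ the very same series estimate. The power-series argument is therefore the cleaner path, needing neither differentiation nor Stirling's formula, and I would adopt it.
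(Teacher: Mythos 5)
Your argument is correct and complete: the term-by-term positivity of the exponential series gives $e^{Ky}\geq (Ky)^m/m!$, and inverting and multiplying by $y^m$ yields the claim, with the case $y=0$ handled separately. The paper states Lemma \ref{l2l} without proof, so there is nothing to compare against; your power-series route is the standard one and avoids both calculus and Stirling's formula.
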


%


\begin{lemm}\cite[Lemma 9.1]{MR4751185}
\label{l1l}
If $0<K\leq1$, then
$$\sum_{m\in\mathbb Z}e^{-K|m|}\leq 3K^{-1}.$$
\end{lemm}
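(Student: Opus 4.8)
The plan is to sum the bilateral series far enough to expose a clean geometric tail, bound that tail by an integral, and then exploit the hypothesis $K\le 1$ to pin down the constant $3$. First I would use the symmetry $|{-m}|=|m|$ to peel off the $m=0$ term, writing
\[
\sum_{m\in\mathbb Z}e^{-K|m|}=1+2\sum_{m=1}^{\infty}e^{-Km}.
\]
This reduces the problem to estimating a one-sided sum whose summand is nonnegative and decreasing in $m$.

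Next I would compare this one-sided sum to an integral. Since $x\mapsto e^{-Kx}$ is nonnegative and decreasing on $[0,\infty)$, for each $m\ge 1$ we have $e^{-Km}\le\int_{m-1}^{m}e^{-Kx}\,{\rm d}x$; as the intervals $[m-1,m]$ tile $[0,\infty)$, summing over $m\ge 1$ gives
\[
\sum_{m=1}^{\infty}e^{-Km}\le\int_0^{\infty}e^{-Kx}\,{\rm d}x=\frac{1}{K}.
\]
Substituting this back yields $\sum_{m\in\mathbb Z}e^{-K|m|}\le 1+2K^{-1}$.

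Finally I would invoke the hypothesis $0<K\le 1$, which forces $1\le K^{-1}$, so the stray additive constant is absorbed into the main term:
\[
1+2K^{-1}\le K^{-1}+2K^{-1}=3K^{-1},
\]
which is exactly the asserted bound.

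The only real subtlety, and the single point I expect to require care, is recovering the sharp constant $3$ rather than a weaker one. A cruder route that sums the geometric series in closed form as $(1+e^{-K})/(1-e^{-K})$ and then estimates $1+e^{-K}\le 2$ together with $1-e^{-K}\ge K/2$ (valid for $K\le1$) only delivers $4K^{-1}$; it is the integral comparison above that captures the correct leading behavior $\sim 2K^{-1}$, after which the constraint $K\le1$ supplies the remaining room. Should one prefer to work from the closed form, the same constant can instead be reached by reducing the claim to the elementary inequality $K(1+e^{-K})\le 3(1-e^{-K})$ and verifying by monotonicity that both sides vanish at $K=0$ while the right-hand side dominates throughout $(0,1]$; but I would favor the integral-comparison argument, as it avoids any differentiation and makes the role of $K\le1$ transparent.
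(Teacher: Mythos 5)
Your argument is correct and complete. The paper itself gives no proof of this lemma --- it simply cites \cite[Lemma 9.1]{MR4751185} --- so there is nothing internal to compare against; your write-up supplies a self-contained verification. The three steps all check out: the symmetric split $\sum_{m\in\mathbb Z}e^{-K|m|}=1+2\sum_{m\ge1}e^{-Km}$, the integral comparison $\sum_{m\ge1}e^{-Km}\le\int_0^\infty e^{-Kx}\,{\rm d}x=K^{-1}$ (valid since $e^{-Kx}$ is decreasing, so each term is dominated by the integral over the preceding unit interval), and the absorption $1\le K^{-1}$ from the hypothesis $0<K\le1$. Your closing remark about the constant is also accurate: the closed form $(1+e^{-K})/(1-e^{-K})$ with the crude bounds you list only yields $4K^{-1}$, and the integral comparison is indeed the cleanest way to land on $3K^{-1}$. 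No gaps.
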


%

\begin{lemm}(\cite[Lemma 2.9 (2)]{DG2017JAMS})\label{3}
For $0 < \kappa \leq 1$, we have
$$
\sum_{(m_1,\cdots,m_r)\in(\mathbb Z^{\nu})^r}\prod_{j=1}^{r}|m_j|^{\alpha_j}e^{-\kappa|m_j|}\leq(6\kappa^{-1})^{|\alpha|+\nu r}\prod_{j=1}^{r}\alpha_j!.
$$
\end{lemm}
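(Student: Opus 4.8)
The plan is to exploit the crucial structural difference between this statement and Lemma \ref{4}: here the sum ranges over all of $(\mathbb Z^\nu)^r$ with \emph{no} constraint coupling the blocks $m_1,\dots,m_r$ through $\cc\ca\cs$. Consequently the summand $\prod_{j=1}^r|m_j|^{\alpha_j}e^{-\kappa|m_j|}$ is a product of factors each depending on a single block, so the whole sum factorizes:
$$\sum_{(m_1,\dots,m_r)\in(\mathbb Z^\nu)^r}\prod_{j=1}^r|m_j|^{\alpha_j}e^{-\kappa|m_j|}=\prod_{j=1}^r\Big(\sum_{m\in\mathbb Z^\nu}|m|^{\alpha_j}e^{-\kappa|m|}\Big).$$
It therefore suffices to prove the single-block estimate $\sum_{m\in\mathbb Z^\nu}|m|^{\alpha}e^{-\kappa|m|}\le\alpha!\,(6\kappa^{-1})^{\alpha+\nu}$ for each $\alpha\in\mathbb N$, since taking the product over $j$ and collecting exponents then yields the claimed bound $(6\kappa^{-1})^{|\alpha|+\nu r}\prod_{j=1}^r\alpha_j!$.

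For the single-block estimate I would split the exponential weight in half, writing $e^{-\kappa|m|}=e^{-\frac\kappa2|m|}\cdot e^{-\frac\kappa2|m|}$, and absorb the polynomial factor $|m|^\alpha$ into the first half. Applying Lemma \ref{l2l} with $y=|m|$ and $K=\kappa/2$ gives the pointwise bound $|m|^{\alpha}e^{-\frac\kappa2|m|}\le\alpha!\,(2\kappa^{-1})^{\alpha}$, the degenerate case $\alpha=0$ being trivial since $e^{-\frac\kappa2|m|}\le1$. The surviving weight is then summed using the product structure of the $\ell^1$ norm $|m|=\sum_{i=1}^\nu|m_i|$: by Lemma \ref{l1l}, which applies because $0<\kappa/2\le1$, one has $\sum_{m\in\mathbb Z^\nu}e^{-\frac\kappa2|m|}=\prod_{i=1}^\nu\sum_{m_i\in\mathbb Z}e^{-\frac\kappa2|m_i|}\le(6\kappa^{-1})^\nu$. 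Combining the two, and using $2\kappa^{-1}\le6\kappa^{-1}$ to align the constants, gives $\sum_{m\in\mathbb Z^\nu}|m|^{\alpha}e^{-\kappa|m|}\le\alpha!\,(2\kappa^{-1})^{\alpha}(6\kappa^{-1})^\nu\le\alpha!\,(6\kappa^{-1})^{\alpha+\nu}$, as required.

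The argument is essentially bookkeeping, so the only real care needed is constant tracking: the factor $2$ produced by splitting the exponent in half and the factor $3$ produced by the one-dimensional geometric-type bound of Lemma \ref{l1l} must multiply to exactly the advertised $6$, and one must use $\kappa\le1$ (hence $\kappa^{-1}\ge1$) to justify the crude replacement $2\kappa^{-1}\le6\kappa^{-1}$ without loss. I expect no genuine obstacle here; unlike Lemma \ref{4}, there is no alternating-sum constraint to dispose of, so the extra $e^{-\frac\kappa2|n|}$ gain and the attendant doubling of the base constant (from $6$ to $12$, arising in Lemma \ref{4} precisely because that proof invokes the present lemma with $\kappa$ replaced by $\kappa/2$) simply do not appear, and the factorized one-block computation closes the estimate directly.
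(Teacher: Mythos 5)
Your argument is correct and complete. Note, however, that the paper itself gives no proof of this lemma: it is simply quoted from \cite[Lemma 2.9 (2)]{DG2017JAMS}, so there is no internal proof to compare against. What you supply is a valid standalone derivation using only the paper's own auxiliary lemmas: the factorization of the unconstrained sum over $(\mathbb Z^\nu)^r$ into a product of single-block sums is legitimate (all terms are nonnegative, and the summand is a product of functions of the individual $m_j$), the splitting $e^{-\kappa|m|}=e^{-\frac{\kappa}{2}|m|}\cdot e^{-\frac{\kappa}{2}|m|}$ combined with Lemma \ref{l2l} at $K=\kappa/2$ gives $|m|^{\alpha}e^{-\frac{\kappa}{2}|m|}\leq \alpha!\,(2\kappa^{-1})^{\alpha}$, and the remaining weight sums to $(6\kappa^{-1})^{\nu}$ by the coordinatewise product structure of the $\ell^1$ norm together with Lemma \ref{l1l} (applicable since $\kappa/2\leq 1$). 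Collecting exponents over $j$ reproduces $(6\kappa^{-1})^{|\alpha|+\nu r}\prod_j\alpha_j!$ exactly. One tiny quibble: the replacement $2\kappa^{-1}\leq 6\kappa^{-1}$ holds for every $\kappa>0$ and does not actually require $\kappa\leq 1$; the hypothesis $\kappa\leq 1$ is only needed so that Lemma \ref{l1l} applies to $K=\kappa/2$. This does not affect correctness. Your closing remark correctly identifies why the constant degrades from $6$ to $12$ in Lemma \ref{4}, namely that the constrained version invokes the present estimate with $\kappa$ replaced by $\kappa/2$ after peeling off the factor $e^{-\frac{\kappa}{2}|n|}$ via the alternating-sum inequality.
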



\begin{lemm}(The length of an element in $\cR^{(k,\gamma^{(k)})}$)\label{lemalpha}
For any multi-index $\alpha\in\cR^{(k,\gamma^{(k)})}$, we have
\begin{align}\label{alpha}
|\alpha|=\ell(\gamma^{(k)}).
\end{align}
\end{lemm}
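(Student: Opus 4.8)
The plan is to prove \eqref{alpha} by induction on $k$, exactly mirroring the recursive definitions of $\cR^{(k,\gamma^{(k)})}$ (from \autoref{alsp}) and of $\ell$ in \eqref{ee}. The underlying observation is that $|\cdot|$ denotes the $\ell^1$-length of a multi-index, i.e.\ the sum of its components, and that every multi-index in $\cR^{(k,\gamma^{(k)})}$ has \emph{nonnegative} integer components. The latter fact should be verified as a companion induction: it holds in the two base cases (the singleton $\{0\}$ and the three unit vectors $\{(1,0,0),(0,1,0),(0,0,1)\}$), and it is preserved under the recursion since $\ce^{(k,\gamma^{(k)})}$ consists of vectors with $\alpha_j\geq0$. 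Once nonnegativity is in hand, $|\cdot|$ becomes additive under concatenation and under the Minkowski shift by an element of $\ce^{(k,\gamma^{(k)})}$, which is the only structural input the argument needs.

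For the base cases I would argue directly. If $\gamma^{(k)}=0\in\Gamma^{(k)}$, then $\cR^{(k,0)}=\{0\in\bZ\}$, so the unique multi-index $\alpha=0$ has $|\alpha|=0=\ell(0)$, matching the first branch of \eqref{ee}. If $\gamma^{(1)}=1\in\Gamma^{(1)}$, then every element of $\cR^{(1,1)}=\{(1,0,0),(0,1,0),(0,0,1)\}$ has exactly one unit entry, so $|\alpha|=1=\ell(\gamma^{(1)})$, matching the second branch of \eqref{ee}. This establishes \eqref{alpha} for $k=1$ and for all trivial branches.

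For the inductive step, fix $k\geq2$ and a branching index $\gamma^{(k)}=(\gamma_j^{(k-1)})_{1\leq j\leq3}\in(\Gamma^{(k-1)})^3$, and assume \eqref{alpha} holds for all $\gamma^{(k-1)}\in\Gamma^{(k-1)}$. By the recursive definition, every $\alpha\in\cR^{(k,\gamma^{(k)})}$ decomposes as
\[
\alpha=(\alpha^{(1)},\alpha^{(2)},\alpha^{(3)})+\beta,\qquad \alpha^{(j)}\in\cR^{(k-1,\gamma_j^{(k-1)})},\quad \beta\in\ce^{(k,\gamma^{(k)})}.
\]
Using additivity of the $\ell^1$-length together with $|\beta|=1$ and the induction hypothesis $|\alpha^{(j)}|=\ell(\gamma_j^{(k-1)})$, I obtain
\[
|\alpha|=\sum_{j=1}^{3}|\alpha^{(j)}|+|\beta|=1+\sum_{j=1}^{3}\ell(\gamma_j^{(k-1)})=\ell(\gamma^{(k)}),
\]
the last equality being precisely the third branch of \eqref{ee}. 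This closes the induction.

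I do not expect a genuine obstacle here: the statement is essentially a bookkeeping identity, and the whole content is that the single unit shift by $\be\in\ce^{(k,\gamma^{(k)})}$ accounts for the ``$+1$'' in the recursion for $\ell$, while the three concatenated blocks account for the $\sum_j\ell(\gamma_j^{(k-1)})$. The only point requiring mild care is the nonnegativity claim, which guarantees that $|\cdot|$ is literally the sum of components and hence additive across the Minkowski decomposition; without it one would have to worry about cancellation, but that cannot occur given the inductive structure of $\cR$.
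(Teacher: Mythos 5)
Your proof is correct and follows essentially the same route as the paper's: the same base cases ($\gamma^{(k)}=0$ and $\gamma^{(1)}=1$), the same decomposition $\alpha=(\alpha^{(1)},\alpha^{(2)},\alpha^{(3)})+\beta$ with $|\beta|=1$, and the same appeal to the third branch of the recursion for $\ell$. Your explicit verification that all components are nonnegative (so that $|\cdot|$ is additive across the decomposition) is a small point the paper leaves implicit, but it does not change the argument.
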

\begin{proof}
It is clear that \eqref{alpha} holds true for $0=\gamma^{(k)}\in\Gamma^{(k)} (k\geq1)$ and $1=\gamma^{(1)}\in\Gamma^{(1)}$ by the definitions of $\cR$ and $\ell$. This shows that \eqref{alpha} is holds for $k=1$.

Let $k\geq2$. Assume that \eqref{alpha} is true for all $1<k^\prime<k$. For $k$, we need to consider only $\gamma^{(k)}=(\gamma_j^{(k-1)})_{1\leq j\leq3}\in\prod_{j=1}^3\Gamma^{(k-1)}$. For any $\alpha\in\cR^{(k,\gamma^{(k)})}$, there exist $\beta\in\ca^{(k,\gamma^{(k)})}$ and $\alpha_j\in\cR^{(k-1,\gamma_{j}^{(k-1)})} (j=1,2,3)$ such that $\alpha=\beta+(\alpha_1,\alpha_2,\alpha_3)$. Hence $|\alpha|=|\beta|+\sum_{j=1}^2|\alpha_{j}|=1+\sum_{j=1}^3\ell(\gamma_j^{(k-1)})=\ell(\gamma^{(k)})$. This proves that \eqref{alpha} holds for $k$ and all $k\geq1$ by induction, finishing the proof of Lemma \ref{lemalpha}.
\end{proof}

\begin{lemm}(The length of an element in $\tG^{(k)}$)\label{lemg}
For any multi-index $\alpha\in\tG^{(k)}$, where $k\geq1$, we have
\begin{align}\label{alphaggg}
\alpha\in\bR^{2k+1}\quad\text{and}\quad |\alpha|=k.
\end{align}
\end{lemm}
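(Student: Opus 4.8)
The plan is to argue by induction on $k$, following verbatim the two-case recursive definition of $\tG^{(k)}$; the argument is the direct analogue of the one already carried out for Lemma \ref{lemalpha}, but simpler, because here the branching degree is fixed and the ambient dimension can be tracked explicitly. The claim bundles two assertions—that every $\alpha\in\tG^{(k)}$ has exactly $2k+1$ components and that its components sum to $k$—and both will emerge from a single induction, since the recursion passes from $\tG^{(k-1)}$ to $\tG^{(k)}$ by incrementing the ambient dimension by $2$ and the total mass by $1$.

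First I would dispose of the base case $k=1$ by direct inspection: each of the three elements $(1,0,0)$, $(0,1,0)$, $(0,0,1)$ of $\tG^{(1)}$ lies in $\bR^{3}=\bR^{2\cdot1+1}$ and has coordinate sum $1=k$, so \eqref{alphaggg} holds at $k=1$.

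For the inductive step, fix $k\geq2$ and suppose \eqref{alphaggg} holds for $k-1$. By definition any $\alpha\in\tG^{(k)}$ decomposes as a Minkowski sum $\alpha=\beta+\gamma$ with $\beta\in\ce^{(k)}$ and $\gamma\in\tG^{(k-1)}\times\{0\}\times\{0\}$. The induction hypothesis says an element of $\tG^{(k-1)}$ lies in $\bR^{2(k-1)+1}=\bR^{2k-1}$; padding with two zero coordinates places $\gamma$ in $\bR^{2k-1+2}=\bR^{2k+1}$, matching the ambient space of $\ce^{(k)}$, so $\alpha\in\bR^{2k+1}$. For the total mass I would use that all coordinates are nonnegative integers—immediate from $\ce^{(k)}\subset\bN^{2k+1}$ together with the nonnegativity built into the base case—so that length is additive under the coordinatewise sum, $|\alpha|=|\beta|+|\gamma|$. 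Since $|\beta|=1$ by the definition of $\ce^{(k)}$, and appending zeros leaves the mass unchanged so that $|\gamma|=k-1$ by induction, I conclude $|\alpha|=1+(k-1)=k$.

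There is no genuine obstacle here—the argument is pure bookkeeping dictated by the recursion. The only point needing a moment's care is to keep the two accounting quantities distinct: the dimension grows by exactly two at each level (the two padding zeros), producing $3,5,7,\dots=2k+1$, whereas the coordinate sum grows by exactly one (the unit supplied by $\ce^{(k)}$), producing $1,2,3,\dots=k$. I would also flag the harmless typographical slip that $\bR^{2k+1}$ should be read as $\bN^{2k+1}$; since $\bN^{2k+1}\subset\bR^{2k+1}$ the stated conclusion follows regardless, and it is precisely the nonnegativity of the coordinates that legitimizes the additivity $|\beta+\gamma|=|\beta|+|\gamma|$ invoked above.
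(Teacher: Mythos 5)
Your proof is correct and follows essentially the same induction as the paper's own argument: decompose $\alpha=\beta+(\alpha',0,0)$ with $\beta\in\ce^{(k)}$ and $\alpha'\in\tG^{(k-1)}$, then add dimensions and lengths. Your extra remarks on nonnegativity justifying $|\beta+\gamma|=|\beta|+|\gamma|$ and on $\bR^{2k+1}$ versus $\bN^{2k+1}$ are accurate but do not change the substance.
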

\begin{proof}
Recalling the definition of $\tG$, it is easy to see that \eqref{alphaggg} is true for $k=1$.

Let $k\geq2$. Assume \eqref{alphaggg} holds for all $1<k^\prime<k$. For $k$ and any $\alpha\in\tG^{(k)}$, there exist $\beta\in\ce^{(k)}$ and $\alpha^\prime\in\tG^{(k-1)}$ such that $\alpha=\beta+(\alpha^\prime,0,0)$. Clearly, $\beta\in\bR^{2k+1}$ and $|\beta|=1$. By induction hypothesis, $\alpha^\prime\in\bR^{2k-1}$ and $|\alpha^\prime|=k-1$. Hence $\alpha\in\bR^{2k+1}$ and $|\alpha|=|\beta|+|\alpha^\prime|=k$. This shows that \eqref{alphaggg} holds for $k$, and for all $k\geq1$ by induction. This finishes the proof of Lemma \ref{lemg}.
\end{proof}

\begin{lemm}(\cite[Lemma 2.18]{DG2017JAMS})
\label{aaaa}
Let $N,L$ be arbitrary. Set
\[\cA_N(L):=\{\alpha=(\alpha_j)_{1\leq j\leq N}\in\bN^N: |\alpha|=L\}.\]
Then
\begin{align*}
\sum_{\alpha=(\alpha_j)_{1\leq j\leq N}\in\cA_N(L)}\prod_{j=1}^{N}\alpha_j!<(2N)^{L}.
\end{align*}
\end{lemm}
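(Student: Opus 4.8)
The plan is to decouple the two independent sources of size in the sum: the magnitude of $\prod_{j=1}^N\alpha_j!$ for a \emph{fixed} multi-index $\alpha$, and the number of multi-indices over which we sum. First I would establish the uniform bound $\prod_{j=1}^N\alpha_j!\le L!$ valid for every $\alpha\in\cA_N(L)$. This is immediate from the fact that the multinomial coefficient $\binom{L}{\alpha_1,\dots,\alpha_N}=L!/\prod_{j}\alpha_j!$ is a positive integer, hence at least $1$; equivalently, since $(a+b)!\ge a!\,b!$, merging any two coordinates can only enlarge the product of factorials, so the product is maximized at $(L,0,\dots,0)$ where it equals $L!$.

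Next I would count the index set. As $\cA_N(L)$ is precisely the set of nonnegative integer solutions of $\alpha_1+\cdots+\alpha_N=L$, a stars-and-bars count gives $|\cA_N(L)|=\binom{N+L-1}{L}$. Combining this with the factorial bound yields
\[
\sum_{\alpha\in\cA_N(L)}\prod_{j=1}^N\alpha_j!
\;\le\; L!\,\bigl|\cA_N(L)\bigr|
\;=\; L!\binom{N+L-1}{L}
\;=\; \frac{(N+L-1)!}{(N-1)!}
\;=\; \prod_{i=0}^{L-1}(N+i),
\]
a product of $L$ consecutive integers starting at $N$ (one may also read the middle quantity combinatorially as the number of ways to insert $L$ labelled items into $N$ ordered lists).

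It then remains to bound this rising factorial by $(2N)^L$. Each factor obeys $N+i<2N$ as soon as $i<N$; since $i$ runs over $0,\dots,L-1$, every factor is strictly below $2N$ precisely when $L\le N$, which gives $\prod_{i=0}^{L-1}(N+i)<(2N)^L$. This is exactly the regime in which the lemma is invoked: in \eqref{gl} one has $N=2k+1$ and $L=k$ (cf. \autoref{lemg}), so $L<N$ and the strict inequality holds comfortably.

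The argument is entirely elementary and I do not anticipate a genuine obstacle; the only real content is the uniform bound $\prod_j\alpha_j!\le L!$, which is where all the combinatorics sits, together with the bookkeeping needed to keep the final estimate strict. The single point meriting care is that the last step uses $L\le N$, so I would flag explicitly that the application satisfies $N=2k+1>k=L$, placing us safely inside the range where the stated bound is valid.
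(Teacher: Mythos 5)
Your argument is correct in the regime where the lemma is actually used, and since the paper does not reproduce a proof of this statement but only cites \cite{DG2017JAMS}, there is no in-text argument to compare against line by line; judged on its own terms, the chain $\prod_j\alpha_j!\le L!$ (positivity of the multinomial coefficient), $|\cA_N(L)|=\binom{N+L-1}{L}$ (stars and bars), and $L!\binom{N+L-1}{L}=\prod_{i=0}^{L-1}(N+i)<(2N)^L$ whenever $1\le L\le N$ is all valid. The point you merely flag at the end, however, deserves to be promoted to the main observation: the hypothesis ``let $N,L$ be arbitrary'' cannot be taken literally, because the stated inequality is simply false without a constraint of the type $L\le N$. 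Indeed, for fixed $N$ the left-hand side contains the single term $L!$ coming from $\alpha=(L,0,\dots,0)$, which eventually dominates $(2N)^L$; concretely, for $N=1$ and $L=4$ the left-hand side is $24$ while the right-hand side is $16$. So the restriction you impose is not an artifact of your method but a necessary hypothesis, and the paper tacitly concedes as much by verifying $L=k<N=2k+1$ (via Lemma \ref{lemg}) immediately before invoking the lemma in \eqref{gl}. Two minor remarks: the strict inequality also fails in the degenerate case $L=0$, where both sides equal $1$ (irrelevant here since $L=k\ge1$); and in the application the sum actually runs over $\tG^{(k)}\subseteq\cA_{2k+1}(k)$, so your bound applies a fortiori. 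With the hypothesis $1\le L\le N$ made explicit, your proof is complete and arguably preferable to a bare citation, since it makes visible exactly where the constraint enters.
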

\begin{lemm}(Stirling's formula)\label{stf}
For all $n\in\bN$,
\[n!=\sqrt{2\pi n}\left(\frac{n}{e}\right)^n\cdot e^{\frac{\theta}{12n}},\quad 0<\theta<1.\]
Hence
\begin{align}\label{prod}
n!\geq\sqrt{2\pi n}\left(\frac{n}{e}\right)^n\geq\left(\frac{n}{e}\right)^n.
\end{align}
\end{lemm}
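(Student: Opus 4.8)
The plan is to read the first identity as the classical Stirling formula with explicit Robbins-type error control, and then to read off the displayed inequality \eqref{prod} as an immediate consequence. Since the identity is entirely standard, I would in practice cite it from any standard analysis reference; for self-containedness, however, I sketch below the elementary difference argument that yields the bound $0<\theta<1$ directly.

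First I would set $d_n=\ln(n!)-(n+\tfrac12)\ln n+n$, so that $n!=n^{n+1/2}e^{-n}e^{d_n}$ and the whole problem reduces to locating $d_n$. Computing the one-step difference gives
\[
d_n-d_{n+1}=\Bigl(n+\tfrac12\Bigr)\ln\frac{n+1}{n}-1 .
\]
Expanding $\ln\frac{n+1}{n}=\ln\frac{1+x}{1-x}=2\sum_{k\geq0}\frac{x^{2k+1}}{2k+1}$ with $x=\frac{1}{2n+1}$, the leading term cancels the $-1$ and leaves $d_n-d_{n+1}=(2n+1)\sum_{k\geq1}\frac{x^{2k+1}}{2k+1}$. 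Bounding the tail by a geometric series ($\tfrac{1}{2k+1}\leq\tfrac13$) and summing yields the sharp two-sided estimate
\[
0<d_n-d_{n+1}<\frac{1}{12n}-\frac{1}{12(n+1)} .
\]
Hence $\{d_n\}$ is strictly decreasing while $\{d_n-\frac{1}{12n}\}$ is strictly increasing; both are bounded and converge to a common limit $C$, and telescoping gives $C<d_n<C+\frac{1}{12n}$. Exponentiating, $n!=e^{C}\,n^{n+1/2}e^{-n}e^{\theta/(12n)}$ for some $\theta=\theta(n)\in(0,1)$.

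It then remains to identify $e^{C}=\sqrt{2\pi}$. I would feed the already-established form $n!\sim e^{C}n^{n+1/2}e^{-n}$ into the Wallis product, equivalently the central-binomial asymptotics $\binom{2n}{n}\sim\frac{4^n}{\sqrt{\pi n}}$, which forces $e^{C}=\sqrt{2\pi}$ and produces the stated identity $n!=\sqrt{2\pi n}\,(n/e)^n e^{\theta/(12n)}$ with $0<\theta<1$. The displayed inequality \eqref{prod} then follows with no further work: since $\theta>0$ we have $e^{\theta/(12n)}\geq1$, and since $n\geq1$ we have $\sqrt{2\pi n}\geq\sqrt{2\pi}>1$, whence $n!\geq\sqrt{2\pi n}\,(n/e)^n\geq (n/e)^n$.

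The main obstacle is the constant-identification step: pinning $e^{C}=\sqrt{2\pi}$ is precisely where the nontrivial input (Wallis, or equivalently a Gaussian-type integral) enters, because the difference argument alone determines $d_n$ only up to the additive constant $C$. The sharp truncation of the log-series is routine. It is worth noting that the body of the paper uses only the weaker bound $n!\geq (n/e)^n$ (at \eqref{lsf}), which can be obtained far more cheaply by comparing $\ln(n!)=\sum_{k=1}^{n}\ln k$ with $\int_1^{n}\ln x\,dx=n\ln n-n+1$, thereby bypassing both the error term and the constant entirely.
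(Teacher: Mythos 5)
Your proof is correct. The paper itself offers no proof of this lemma --- it is stated as a classical fact (Stirling's formula with the Robbins error bound) and used only through the consequence $n!\geq (n/e)^n$ in \eqref{lsf} --- so there is no argument in the paper to compare against; any standard derivation suffices. Your sketch is the standard Robbins argument and the details check out: the telescoping bound $0<d_n-d_{n+1}<\frac{1}{12n}-\frac{1}{12(n+1)}$ follows from the sharp truncation of $\ln\frac{1+x}{1-x}$ at $x=\frac{1}{2n+1}$, giving $d_n=C+\theta/(12n)$ with $\theta\in(0,1)$, and the constant $e^{C}=\sqrt{2\pi}$ is pinned down by Wallis. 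The deduction of \eqref{prod} from $\theta>0$ and $\sqrt{2\pi n}\geq\sqrt{2\pi}>1$ is immediate. Your closing observation is also well taken: since the paper only ever invokes $n!\geq(n/e)^n$, the cheap comparison $\ln(n!)=\sum_{k=1}^{n}\ln k\geq\int_{1}^{n}\ln x\,\mathrm{d}x=n\ln n-n+1$ would serve equally well and avoids both the error term and the constant identification.
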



\bibliographystyle{alpha}
\bibliography{dNLS}

\end{document}